\def\ohad#1{\textcolor{red}{#1}}
\newcommand{\RR}{{\mathbb R}}
\newcommand{\ZZ}{{\mathbb Z}}
\newcommand{\NN}{{\mathbb N}}
\newcommand{\lm}{\lambda}
\newcommand{\ep}{\epsilon}
\newcommand{\ra}{\rightarrow}
\newcommand{\deq}{\stackrel{\rm d}{=}}
\newcommand{\qandq}{\quad\mbox{and}\quad}
\newcommand{\qforq}{\quad\mbox{for }}
\newcommand{\qforallq}{\quad\mbox{for all }}
\def\tinf{\rightarrow\infty}
\def\Ra{\Rightarrow}
\newcommand{\bes}{\begin{equation*}}
\newcommand{\ees}{\end{equation*}}
\newcommand{\bequ}{\begin{equation}}
\newcommand{\eeq}{\end{equation}}
\newcommand{\bsplit}{\begin{split}}
\newcommand{\esplit}{\end{split}}
\newcommand{\bea}{\begin{eqnarray}}
\newcommand{\eea}{\end{eqnarray}}
\newcommand{\beas}{\begin{eqnarray*}}
\newcommand{\eeas}{\end{eqnarray*}}
\newcommand{\benum}{\begin{enumerate}}
\newcommand{\eenum}{\end{enumerate}}
\newcommand{\RNum}[1]{\uppercase\expandafter{\romannumeral #1\relax}}
\newcommand{\E}{\mathbb{E}}
\newcommand{\R}{\mathbb{R}}
\newcommand{\of}[1]{\left(#1\right)}
\newcommand{\off}[1]{\left[#1\right]}
\newcommand{\offf}[1]{\left\{#1\right\}}
\newcommand{\arr}{\rightarrow}
\newcommand{\Arr}{\Rightarrow}
\begin{document}
\maketitle



\section{Introduction}
Polling systems are queueing networks in which multiple queues are attended by
a single server that switches between the different buffers according to a given switching policy.
This class of systems is used to model a large variety of systems in practice, such as
communication, production, transportation, healthcare and computer systems; see, e.g., \cite{levy1990polling,takagi1991application,federgruen1996stochastic,takagi1997queueing,cicin2001application,boon2011applications,borst2018polling}.

In most of the literature, rigorous analysis of stochastic polling systems under a given switching policy is
carried out by deriving multi-dimensional transforms for the queue process at the polling epochs,
from which the distribution of the queue can, in principle, be computed by inverting the transform. However, as explained in \cite{choudhury1996computing},
those transforms are not available directly, and are instead expressed implicitly in a recursive form.
Even more fundamental is the fact that transforms can only be computed for a class of policies that satisfy a certain branching-type property \citep{resing1993polling}. Nevertheless, algorithms to invert the transforms, when they can be derived, are known, and can be used
to numerically compute distributions and moments.

Arguably, the most important moments are the first two, but both the question of whether higher moments exist, and the values of such moments
are also of interest for at least two reasons: First, higher moments can be used to evaluate tail probabilities of the waiting times
in the different queues \citep{choudhury1996computing}.
Second, the existence of the $p$th moment, say, can be used to prove uniform integrability of the $m$th power of the queue, $m < p$,
which in turn can be used to evaluate the cost of a policy when a holding cost (that grows at most at a polynomial rate) is incurred on the queues
via asymptotic approximations; see \cite{Hu20arxiv} for an example.
However, for most policies for which computational algorithms to evaluate the moments have been developed,
it has not been established that the moments, other than the mean, exist.

In this paper we provide sufficient conditions for the existence of all the moments, and necessary-and-sufficient conditions
for the existence of the first two moments of the queue process at polling epochs, when the system operates under the binomial-exhaustive policy (BEP) \citep{levy1988optimization,levy1990dominance}.
To this end, we employ the buffer occupancy approach, first developed in \cite{cooper1969queues}.
We then prove that the moments converge to the (unique) solution to the buffer occupancy equations under a large switchover times asymptotic, namely,
as the switchover times increase without bound. Thus, the solution to the buffer occupancy equations can be used to approximate any of the moments,
provided the switchover times are large enough.
Finally, we make the observation that the buffer occupancy equations are also the balance equations of the periodic steady state
of a fluid model for the system, implying that the $p$th moments of the queue process at polling epochs can be approximated by taking
the $p$th power of the fluid model at the corresponding time epochs. Since fluid models are easy to derive, this fluid-type approximation
can be used as a heuristic even when the asymptotic approximations for the moments cannot be proved rigorously.
Our numerical examples demonstrate the good performance of this heuristic.

\subsection{Background and Related Literature}
The buffer occupancy approach establishes a recursive equation for the multi-dimensional probability generating function (p.g.f.)
for the number in system at the polling epochs.
Differentiation of the p.g.f.\ yields systems of linear equations, referred to as the buffer occupancy equations,
satisfied by the moments of the number in system, whenever these moments exist.
This approach has been applied to analyzing a wide variety of polling systems,
including cyclic systems with zero switchover times \cite{cooper1969queues, cooper1970queues};
cyclic systems with nonzero switchover times \cite{konheim1974waiting, rubin1983message}; and non-cyclic polling systems \cite{kleinrock1988analysis}.
We refer to \cite{takagi1986analysis} for applications of the approach for systems with cyclic routing operating under the exhaustive and gated policies.

It is significant that solving the buffer occupancy equations can be numerically challenging.
In the simplest setting, in which $K$ queues are visited in a cyclic order and the exhaustive policy is employed,
the set of buffer occupancy equations for the $p$th moment consists of $K^{p+1}$ linear equations.
To make the calculation computationally feasible, several numerical methods have been proposed to efficiently solve the buffer occupancy equations.
In particular, \cite{levy1989delay} proposes a successive approximation approach to solve the second-order buffer occupancy equations.
For this solution method, the author shows that solving the $K^3$ equations requires $O(K^3 \log_\rho \ep)$ elementary operations,
where $\rho$ is the system utilization and $\ep$ is the specified relative accuracy.
Variants of the buffer occupancy approach include the station time approach \citep{aminetzah1975exact,humblet1978source,ferguson1986computation,baker1987polling},
Swartz's approach \citep{swartz1980polling}, Sarkar and Zangwill's approach \citep{sarkar1989expected},
the descendant set method \citep{konheim1994descendant}, and the numerical inverse transform algorithm \citep{choudhury1996computing}.
only few works establish that the buffer occupancy equations admit a unique solution.
For systems with cyclic routing under the exhaustive and gated policies, \cite{takagi1986analysis} establishes uniqueness of the solution to the first-order equations by directly solving those equations; \cite{levy1989delay} proves the existence of a unique fixed point of the second-order equations via a contraction mapping argument.

We emphasize three fundamental points:
First, characterizing the buffer occupancy equations of order higher than two is prohibitive.
Second, a numerical solution to these equations does not imply that a unique solution exists.
Finally, it is not immediately clear that the existence of a unique solution to the equations
guarantees that the moments themselves exist.
We remark that we address the final point by proving that the existence of a unique solution to the buffer occupancy equations
(under any control for which these equations are available) does imply that the moments exist; see Theorem \ref{lem:SecondMoment} and Remark \ref{rem:SecondMoment}.
This latter result can be used to compute and to guarantee the existence of the first few moments whenever the buffer occupancy equations are available.

\subsection{Summary of Our Contribution}
We consider a system with $K$ queues, that are fed by $K$ independent Poisson processes, and are attended by the server in accordance with a table consisting of $I$ stages,
$I \geq K$. The server switches from one stage to the next according to BEP \citep{levy1988optimization,levy1990dominance},
which belongs to the family of branching-type controls \citep{resing1993polling,winands2011branching}, and for which the exhaustive policy is a special case.
A random switchover time is incurred whenever the server switches.
In this setting we establish the following results.

\noindent{\bf Sufficient condition for the existence of all moments.~}
We apply the buffer occupancy approach to derive the p.g.f.\ of the steady-state queue length at the polling epochs. Based on that p.g.f.,
we prove that, if the service time distribution at each queue has a moment generating function (m.g.f.) that is finite in some neighborhood of zero,
and if the switchover time distribution at each stage has finite m.g.f.\ on the positive real line,
then the steady-state queue length at the polling epoch of each stage has finite moments of all orders.

\noindent{\bf Necessary and sufficient condition for the existence of the second moment.~}
Differentiating the p.g.f.\ of the steady-state queue length $p$ times yields the $p$th-order buffer occupancy equations---a system of $IK^p$ equations satisfied by the $p$th moment and cross moment of the steady-state queue length.
Using a contraction argument, we show that the first- and second-order buffer occupancy equations each admit a unique solution.
Moreover, we prove that a necessary-and-sufficient condition for the steady-state queue length distribution to have finite second moment
is for the variance of each service time and switchover time distribution to exist.

\noindent{\bf Asymptotic approximations for the moments.~} Under a large switchover-time scaling, namely, assuming that the switchover times
increase without bound, we prove that the $p$th moments of the ``fluid-scaled'' queues at polling epochs converge to the solution to the first-order buffer occupancy equations, raised to the $p$th power. This result
suggests a simple approximation for the moments using fluid models.
In particular, we make the observation that the first-order buffer occupancy equations are equivalent to the balance equations of a deterministic fluid model
for the system in steady state (more accurately, in a periodic steady state). Further, that fluid model arises as a bona-fide
functional weak law of large numbers (FWLLN) under the large switchover-times scaling.

\medskip

In ending we emphasize that, even though our focus is on BEP, the main arguments in the paper can be repeated for other policies for which the buffer occupancy approach applies.
Moreover, the fluid-model approach can be used to approximate the moments for policies in general, including those to which the buffer occupancy approach is not applicable.
Although we do not carry out the full rigorous analysis for other policies,
we demonstrate the effectiveness of our approximations for the moments under the binomial-gated policy (BGP), and base-stock policy (BSP)
via numerical examples; see Sections \ref{sec:OtherPolicy}. Note that BSP is not a branching-type policy,
and so the transforms for the queue process at polling epochs are not known.

\subsection{Notation}
All the random variables and processes are defined on a single probability space $(\Omega, \mathcal F, \mathbb{P})$; expectation with respect to $\mathbb P$ is denote by $\E$.
We let $\RR$, $\ZZ$ and $\NN$ denote the sets of real numbers, integers and strictly positive integers, respectively.
We also write $\ZZ_+ := \NN \cup \{0\}$ and $\RR_+ := [0,\infty)$. For $k \in \NN$, we let
$\RR^k$ denote the space of $k$-dimensional vectors with real components, and denote these vectors with bold letters and numbers; in particular,
we write $\boldsymbol 1 := (1,\dots, 1)$ for the vector of $1$'s.
We let $D^k$ denote the space of right-continuous $\RR^k$-valued functions (on arbitrary finite time intervals) with limits everywhere,
endowed with the usual Skorokhod $J_1$ topology; see Chapter 11 of \cite{whitt2002stochastic}.
We let $D := D^1$.
We use $C^k$ (and $C := C^1$) to denote the subspace of $D^k$ of continuous functions.
It is well-known that the $J_1$ topology relativized to $C^k$ coincides with the uniform topology on $C^k$, which is induced by the norm
\begin{equation*}
||x||_t := \sup_{0 \leq u \leq t} \|x(u)\|,
\end{equation*}
where $||x||$ denotes the usual Euclidean norm of $x \in \R^k$.
We use ``$\Ra$" to denote weak convergence of random variables in $\RR^k$, and of stochastic processes over compact time intervals.

For $f : \R^k \arr [0, \infty)$, $g : \R^k \arr [0, \infty)$ and $a \in \R_+^k \cup \{\infty\}$
we write $f(x) = O(g(x))$ as $x \arr a$ if $\limsup_{x \arr a} f(x)/g(x) < \infty$, and $f(x) = o(g(x))$ if  $\lim_{x \arr a} f(x)/g(x) = 0$.

For $x,y \in \RR$, we write $x \wedge y := \max\{x, y\}$.
For a function $x \in D$, $x(a-)$ denotes the left-hand limit at $a$, i.e., $x(a-) := \lim_{t \arr a^-} x(t)$ is the left-hand limit at the point $a$.

\subsection{Organization}
The rest of the paper is organized as follows. We introduce the model in Section \ref{sec:Model}.
In Section \ref{ap:pgf} we employ the buffer occupancy approach to establish a recursive characterization of the p.g.f.\ for the number in system at the polling epochs.
Based on that p.g.f., in Section \ref{ap:AllMoments} we develop sufficient condition (the existence of the m.g.f.'s for the service time and switchover time distributions) for the existence of all moments of the steady-state queue length.
In Section \ref{sec:FirstAndSecondMoment} we prove that the first- and second-order buffer occupancy equations each admits a unique solution, based on which the first and second moments can be calculated.
In addition, we relax the condition on the m.g.f.'s of the service time and switchover time distributions, and show that the second moment exists if and only if these distributions each possesses a finite variance.
In Section \ref{ap:AsympMoment} we apply the large-switchover-time scaling to the system, and characterize the limits for the moments of a sequence
of scaled stationary queue length.
We propose a simple approximation scheme for moments of all orders based on fluid models, which can be shown to be bona-fide fluid limits,
and demonstrate its effectiveness using simulation.
We conclude in Section \ref{sec:Conclusion}.

\section{The Model} \label{sec:Model}
We study a system of $K$ queues attended by a single server. Customers arrive at queue $k \in \mathcal K := \{1,...,K\}$ according to a Poisson process with rate $\lambda_{k} > 0$, and wait to be served in a infinite buffer.
The service times for customers at queue $k$ are independent and identically distributed (i.i.d) random variables with mean $1/\mu_k < \infty$. We use $S_k$ to denote a generic random variable that has the service time distribution of customers at queue $k$.

The server attends the queues periodically according to a polling table with $I$ stages, $I \geq K$.
We refer to each attendance of the server to the queue as a visit.
For $\mathcal I := \{1,...,I\}$, we define the polling function $p : \mathcal I \arr \mathcal K$ to map the stage to the corresponding queue visited by the server.
Note that a queue can be visited more than once in the polling table, and the polling table is said to be cyclic if each queue appears exactly once, in which case $I = K$.
Each transition of the server from stage $i$ to stage $i + 1 \, (\text{mod } I)$ incurs a random switchover time with mean $s_i $. We use $V_i$ to denote a generic random variable with the switchover time distribution from stage $i$ to stage $i + 1$. Let $s :=\sum_{i \in \mathcal I} s_i$ denote the total expected switchover time over the polling table, and assume that $s > 0$.

At each stage, the server serves the queue according to BEP \citep{levy1988optimization}, which is fully specified by a vector $\mathbf r = (r_1, ..., r_I) \in [0,1]^I$, whose component $r_i$ represents the average reduction proportion of the queue corresponding to stage $i \in \mathcal I$. If $\sum_{i \in \mathcal I : p(i) = k} r_i = 0$, then queue $k \in \mathcal K$ receives no service and is therefore unstable. Thus, we consider vector $\mathbf r$ to be in the set
\begin{equation*}
\mathcal R := \offf{\mathbf r \in [0,1]^I : \sum_{i \in \mathcal I : p(i) = k} r_i  > 0 \qforallq k \in \mathcal K} .
\end{equation*}

\begin{definition} [BEP]
	For $\mathbf r \in \mathcal R$, $i \in \mathcal I$, if the server finds $N$ customers at the polling epoch of stage $i$, then the server serves queue $p(i)$ until its queue length is reduced to level $N - Y_i(N, r_i)$, where $Y_i(N, r_i)$ is a binomial random variable with parameters $N$ and $r_i$, independently of all other random variables and processes.
\end{definition}

Equivalently, BEP can be considered as one in which the server draws an independent Bernoulli random variable with ``success probability" $r_i$ for each customer in the queue at the start of the stage $i$ visit, $i \in \mathcal I$. If the Bernoulli random variable is equal to one (i.e., a ``success"), then the server serves that customer together with all the newly arrived customers during its service time. Thus, the total time the server spends serving at stage $i$ is equal to the sum of $Y_i(N, r_i)$ busy periods of an $M/G/1$ queue with arrival rate $\lambda_{p(i)}$ and service time distributed as $S_{p(i)}$.

We refer to the starting time of a visit to a queue as a polling epoch, and the ending time of a visit as the departure epoch. A cycle is the time elapsed between two consecutive polling epochs of stage $1$.
For $m \geq 1$, $i \in \mathcal I$,
we let $A_i^{(m)}$ and $D_i^{(m)}$ denote the polling and departure epochs of stage $i$ in the $m$th cycle, respectively. Then $B_i^{(m)} := D_i^{(m)} - A_i^{(m)}$ is the busy time the server spends serving at stage $i$ in the $m$th cycle.
Without loss of generality, we take time $0$ to be the polling epoch of stage $1$ in the first cycle, i.e., $A_1^{(1)} := 0$.
We use $Q_k(t)$ to denote the number of customers in queue $k$ at time $t$, $k \in \mathcal K$, and write $Q(t) := (Q_k(t), k \in \mathcal K)$, $t \geq 0$.
Finally, we define $\tilde Q := \{Q(A_1^{(m)}) : m \geq 1\}$.

\paragraph{Stability.}
Let $\rho_k := \lambda_{k}/\mu_k$ denote the traffic intensity at queue $k$, and let $\rho := \sum_{k \in \mathcal K} \rho_k$,
and assume that $\rho < 1$, which is necessary and sufficient for the system to stabilize in a stationary distribution with integrable stationary queue and cycle lengths.
The proof of the following lemma follows from Proposition 1 and Theorem 3 in \cite{fricker1994monotonicity}.
Let $Q(A_1)$ be the random variable distributed according to the stationary distribution of $\tilde Q$,
so that $Q(A_1^{(m)}) \Arr Q(A_1)$ as $m \arr \infty$.
\begin{lemma} \label{lem:stability}
	$\tilde Q$ is an ergodic DTMC and $\E\off{Q(A_1)} < \infty$.
\end{lemma}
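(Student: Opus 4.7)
The plan is to establish the three properties---Markov property, irreducibility and aperiodicity, and positive recurrence with finite stationary first moment---separately. The first two are essentially immediate from the model assumptions; the third is obtained from a Foster--Lyapunov drift criterion with a linear test function.

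The Markov property of $\tilde Q$ follows because the arrivals to each queue form independent Poisson processes, the service and switchover times are i.i.d.\ and independent of the past, and the BEP action at stage $i$ uses only the queue length at the polling epoch of stage $i$. Hence the entire evolution of $Q$ over $[A_1^{(m)}, A_1^{(m+1)}]$ depends on the history up to $A_1^{(m)}$ only through $Q(A_1^{(m)})$. For irreducibility, starting from any state $\mathbf q$ the chain can reach $\mathbf 0$ in one cycle with positive probability: on the event that, at every stage $i$ with $r_i > 0$, every Bernoulli trial succeeds, and no Poisson arrival occurs throughout the cycle, each queue is exhausted (since $\sum_{i : p(i)=k} r_i > 0$ for every $k$) and stays at $0$ through the end of the cycle; conversely, every state $\mathbf q$ is reachable from $\mathbf 0$ by having exactly $q_k$ arrivals to queue $k$ during the first cycle. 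Aperiodicity follows because $\mathbf 0$ returns to itself in one cycle with positive probability.

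For positive recurrence and integrability of the stationary first moment, the plan is to apply the Foster--Lyapunov criterion with the linear test function $V(\mathbf q) := \sum_{k \in \mathcal K} \alpha_k q_k$ for weights $\alpha_k > 0$ to be chosen. Let $T(\mathbf q)$ denote the expected cycle length starting from $\mathbf q$, and let $N_i(\mathbf q)$ denote the expected length of queue $p(i)$ at the polling epoch of stage $i$ during that cycle. Since $\rho_{p(i)} \leq \rho < 1$, each busy period initiated at stage $i$ has finite mean $1/(\mu_{p(i)}(1-\rho_{p(i)}))$, and propagating Poisson-arrival expectations together with Binomial-service expectations stage by stage shows that $T(\mathbf q)$ and each $N_i(\mathbf q)$ are affine in $\mathbf q$. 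The resulting one-step drift equals
\bes
\E\off{V(Q(A_1^{(m+1)})) - V(\mathbf q) \mid Q(A_1^{(m)}) = \mathbf q}
= \sum_{k \in \mathcal K} \alpha_k \off{ \lambda_k T(\mathbf q) - \sum_{i : p(i)=k} r_i N_i(\mathbf q) } ,
\ees
which is itself affine in $\mathbf q$. Choosing the $\alpha_k$ in accordance with the left-Perron eigenvector of the associated branching-type transition matrix and using $\rho < 1$, the linear coefficients become strictly negative, yielding a drift inequality $\E[V(Q(A_1^{(m+1)})) \mid \mathbf q] \leq V(\mathbf q) - \ep \|\mathbf q\| + c$ for some constants $\ep, c > 0$. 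The criterion then delivers positive recurrence and $\E[V(Q(A_1))] < \infty$, which implies $\E[Q(A_1)] < \infty$.

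The main obstacle is the affine-drift bookkeeping: the expected cycle length $T(\mathbf q)$ and the stage-$i$ polling-epoch queue length $N_i(\mathbf q)$ have to be tracked as explicit affine functions of $\mathbf q$ through every stage of the polling table, and the weights $\alpha_k$ must then be chosen so that the linear part of the drift collapses to a strictly negative multiple of $\|\mathbf q\|$ under the sole assumption $\rho < 1$. Precisely this branching-type analysis is carried out in Proposition~1 and Theorem~3 of \cite{fricker1994monotonicity}, from which the lemma follows directly in the BEP setting considered here.
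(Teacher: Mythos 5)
Your proposal is correct and ultimately rests on exactly the same foundation as the paper, which proves this lemma by a one-line appeal to Proposition~1 and Theorem~3 of \cite{fricker1994monotonicity}. The additional material you supply---the Markov/irreducibility/aperiodicity verification and the Foster--Lyapunov sketch with a weighted linear test function---is sound in outline, but since you defer the one genuinely delicate step (choosing the weights $\alpha_k$ so that $\rho<1$ forces strictly negative linear drift) to the same cited reference, the argument is in substance identical to the paper's.
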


\section{Probability Generating Function of the Steady-State Queue Length} \label{ap:pgf}
In this section, we apply the buffer occupancy approach to derive the multi-dimensional p.g.f.\ of the stationary queue length embedded at the polling epochs.
In particular, we consider the system in steady state, operating under BEP with parameter $\mathbf r \in \mathcal R$. (The existence of the steady state follows from Lemma \ref{lem:stability}.)
Notation wise, we drop the transient time index $(m)$ to denote steady-state quantities. For example, for $i \in \mathcal I$, we use $A_i$ and $D_i$ to denote a generic polling and departure epoch of stage $i$ in steady state, respectively. $B_i := D_i - A_i$ is a generic random variable denoting the steady-state busy time the server spends serving at stage $i$.

Let $\hat R_i$, $i \in \mathcal{I}$, denote the Laplace-Stieltjes transform (LST) of the switchover time distribution when the server switches away from stage $i$.
In addition, let $\Theta_{p(i)}^{(\ell)}$, $i \in \mathcal{I}$, denote the busy period ``generated'' by the service of the $\ell$th served customer in queue $p(i)$ at the polling epoch of the queue,
and let $\hat \theta_k$, $k \in\mathcal{K}$, denote the LST of the busy period of an $M/G/1$ corresponding to queue $k$ that has arrival rate $\lm_k$
and service rate $\mu_k$.
All LSTs are defined with real (non-complex) exponents.

Let
\begin{equation*}
F_{i}(z_1,...,z_K) := \E\off{\prod_{k=1}^K z_k^{Q_k \of{A_i} }} , \quad i \in \mathcal{I},
\end{equation*}
denote the joint p.g.f.\ for $Q$ at time $A_i$.
Let the marginal p.g.f.\ for $Q_k$ at time $A_i$ be
\begin{equation} \label{eq:118}
G_{i,k}(u) := \E\off{u^{Q_k(A_i)}} = F_i(1,...,1, u, 1,...,1), \quad k \in \mathcal{K}, \quad i \in \mathcal{I},
\end{equation}
where $u$ is the $k$th argument for the joint p.g.f.\ $F_i$.
Let $\mathbf z := (z_1,...,z_K)$.

Given the p.g.f.'s, we can retrieve the moments of the steady-state queue length {\em whenever the moments exist}. In particular, let
\begin{equation*}
\begin{split}
&f_i(k) := \off{\frac{\partial F_i(z_1,...,z_K)}{\partial z_k} }_{\mathbf z = \mathbf 1}, \quad k \in \mathcal{K}, \quad i \in \mathcal{I} \\
&f_i(k,\ell) := \off{\frac{\partial^2 F_i(z_1,...,z_K)}{\partial z_k \partial z_\ell} }_{\mathbf z=\mathbf 1} , \quad k, \ell \in \mathcal{K}, \quad i \in \mathcal{I}.
\end{split}
\end{equation*}
Note that $F_i$ and $G_{i,k}$ are related via
\begin{equation*}
f_i(k) = G_{i,k}^{(1)}(1), \quad f_i(k,k) = G_{i,k}^{(2)}(1), \quad k \in \mathcal{K}, \quad i \in \mathcal{I},
\end{equation*}
where $G_{i,k}^{(1)}(1)$ and $G_{i,k}^{(2)}(1)$ are, respectively, the first and second derivative of $G_{i,k}$ at $u=1$.
Then
\begin{equation*}
\begin{split}
\E\off{Q_k(A_i)} &= f_i(k) , \quad k \in \mathcal{K}, \quad i \in \mathcal{I} \\
\E\off{Q_k(A_i) \of{Q_k(A_i) - 1}} &= f_i(k,k), \quad k \in \mathcal{K}, \quad i \in \mathcal{I}  \\
\E\off{Q_k(A_i) Q_j(A_i)} &= f_i(j,k), \quad j, k \in \mathcal{K}, \quad j \neq k, \quad i \in \mathcal{I} .
\end{split}
\end{equation*}

Let $w:\mathcal{I} \times \NN \arr \mathcal{I}$ be the function which retrieves the stage index in the polling table $j$
stages backward given that the server is currently at stage $i$, i.e.,
\begin{equation} \label{eq:97}
w(i,j) =
\begin{cases}
i - \of{ j \text{ mod } I }  \quad &\text{if }( j \text{ mod } I)  < i \\
I - \of{ j \text{ mod } I  -  i } \quad &\text{if } (j \text{ mod } I ) \geq i.
\end{cases}
\end{equation}
Correspondingly, $p(w(i,j))$ is the queue visited at stage $w(i,j)$.


\begin{proposition}  \label{lem:PGFBinExh}
	For all $i \in \mathcal{I}$ we have that
	\begin{equation} \label{eq:PGF}
	\begin{split}
	F_{i}\of{z_1, ...,z_K } &= \prod_{j=1}^\infty \hat R_{w(i,j)}\off{y^{(j-1)} },
	\end{split}
	\end{equation}
	where, with initial conditions $\mathbf z^{(0)} = \mathbf z$ and $y^{(0)} = \sum_{k=1}^K \of{\lambda_k - \lambda_k z_k}$,
	\begin{equation} \label{eq:PGF_recursion}
	\begin{split}
	z^{(j)}_\ell &=
	\begin{cases}
	z_\ell^{(j-1)} \quad &\text{for } \ell \neq p(w(i,j)) \\
	(1-r_{w(i,j)}) z_\ell^{(j-1)} + r_{w(i,j)} \hat \theta_\ell \off{y^{(j-1)}  - \of{\lambda_\ell - \lambda_\ell z_\ell^{(j-1)} }  } \quad &\text{for } \ell = p(w(i,j)) \\
	\end{cases} \\
	y^{(j)} &=  y^{(j-1)} + \lambda_{p(w(i,j))} \of{ z_{p(w(i,j))}^{(j-1)} - z_{p(w(i,j))}^{(j)}  } .
	\end{split}
	\end{equation}
	Furthermore,
	\begin{equation*}
	\begin{split}
	\lim_{m \arr \infty} \mathbf z^{(m)} = \mathbf 1, \quad
	\lim_{m \arr \infty} y^{(m)} = 0, \qandq
	\lim_{m \arr \infty} \hat R_{w(i,m)} \off{y^{(m-1)}} = 1, \quad i \in \mathcal{I} .
	\end{split}
	\end{equation*}
\end{proposition}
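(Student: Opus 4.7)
The plan is to carry out a classical buffer-occupancy derivation in three steps: (i) establish a one-step backward recursion linking $F_i$ and $F_{w(i,1)}$; (ii) iterate it to obtain a finite-product identity valid for every $m \geq 1$; and (iii) prove the limits $\mathbf z^{(m)} \arr \mathbf 1$, $y^{(m)} \arr 0$, and $\hat R_{w(i,m)}(y^{(m-1)}) \arr 1$, and pass to $m \arr \infty$. For (i), I would condition on the $\sigma$-algebra generated by $Q\of{A_{w(i,1)}}$ together with the independent randomness driving the evolution between the consecutive polling epochs $A_{w(i,1)}$ and $A_i$: the Bernoulli selections under BEP, the $M/G/1$ busy periods they trigger in queue $p(w(i,1))$, and the switchover $V_{w(i,1)}$. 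Three ingredients then combine: (a) arrivals to each queue $k$ during $[A_{w(i,1)}, A_i)$ are Poisson with rate $\lambda_k$, contributing exponential factors in $\mathbf z$ (and arrivals to queue $p(w(i,1))$ during the busy time are absorbed into the busy-period LST $\hat\theta_{p(w(i,1))}$); (b) the binomial theorem collapses the binomial$(N, r_{w(i,1)})$ Bernoulli selections into the convex combination that defines $z^{(1)}_{p(w(i,1))}$ in \eqref{eq:PGF_recursion}; and (c) integrating out the independent $V_{w(i,1)}$ yields the factor $\hat R_{w(i,1)}(y^{(0)})$. The upshot is the identity
\begin{equation*}
F_i(\mathbf z) = \hat R_{w(i,1)}\of{y^{(0)}}\, F_{w(i,1)}\of{\mathbf z^{(1)}}.
\end{equation*}

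For (ii), a straightforward induction then gives, for every $m \geq 1$ and every $\mathbf z \in [0,1]^K$,
\begin{equation*}
F_i(\mathbf z) = \off{\prod_{j=1}^{m} \hat R_{w(i,j)}\of{y^{(j-1)}}} F_{w(i,m)}\of{\mathbf z^{(m)}}.
\end{equation*}
Since every update in \eqref{eq:PGF_recursion} is a convex combination of elements of $[0,1]$, the iterates $\mathbf z^{(j)}$ remain in $[0,1]^K$ and the $y^{(j)}$ remain nonnegative, so all LSTs and p.g.f.'s involved are well-defined.

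Step (iii) is where I expect the main obstacle to lie. My plan is to pass to the defect coordinates $\alpha_k^{(j)} := 1 - z_k^{(j)} \in [0,1]$, for which the recursion becomes $\alpha_k^{(j)} = \alpha_k^{(j-1)}$ for $k \neq \ell := p(w(i,j))$ and
\begin{equation*}
\alpha_\ell^{(j)} = (1 - r_{w(i,j)})\,\alpha_\ell^{(j-1)} + r_{w(i,j)}\off{1 - \hat\theta_\ell\of{y^{(j-1)} - \lambda_\ell \alpha_\ell^{(j-1)}}},
\end{equation*}
with $y^{(j)} = \sum_{k \in \mathcal K} \lambda_k \alpha_k^{(j)}$. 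The elementary bound $1 - \hat\theta_k(s) \leq s\,\E[\Theta_k] = s/(\mu_k - \lambda_k)$ (which follows from $1 - e^{-u} \leq u$ and the $M/G/1$ busy-period mean $1/(\mu_k - \lambda_k)$) shows that one step is dominated componentwise by a nonnegative linear map acting on $\alpha^{(j-1)}$. Chaining $I$ consecutive steps into the one-cycle linear map and combining with $\mathbf r \in \mathcal R$ (so each queue is selected with strictly positive aggregate probability per cycle) and $\rho < 1$, I would argue that this one-cycle map has spectral radius strictly less than one; this is essentially subcriticality of the multitype branching process that governs BEP, in the sense of \cite{resing1993polling,winands2011branching}. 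Geometric decay of $\alpha^{(j)}$ across cycles then delivers $\mathbf z^{(j)} \arr \mathbf 1$ and $y^{(j)} \arr 0$; the continuity of each $\hat R_{\cdot}$ and each $F_{\cdot}$ at the relevant boundary, together with $\hat R_{\cdot}(0) = 1$ and $F_{\cdot}(\mathbf 1) = 1$, yields the remaining two limits; and the uniform bound $|F_{w(i,m)}(\mathbf z^{(m)})| \leq 1$ justifies passing $m \arr \infty$ in the finite-product identity to obtain \eqref{eq:PGF}.
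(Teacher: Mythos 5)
Your steps (i) and (ii) reproduce the paper's derivation: the one-step identity you obtain by conditioning is exactly the stationary form of the paper's recursion \eqref{eq:104}/\eqref{eq:111}, and the finite-product identity is the paper's \eqref{eq:105} (the paper carries an arbitrary initial p.g.f.\ $F_{w(i,m),0}$ in place of your stationary $F_{w(i,m)}$, but the algebra is identical). Where you genuinely diverge is step (iii). The paper disposes of the limits by a soft ergodicity argument: since $F_{i,m}$ converges to the steady-state p.g.f.\ irrespective of the initial distribution, the trailing factor $F_{w(i,m),0}(\mathbf z^{(m)})$ must tend to $1$, which forces $\mathbf z^{(m)} \arr \mathbf 1$ and hence $y^{(m)} \arr 0$. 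You instead prove $\mathbf z^{(m)} \arr \mathbf 1$ directly, by dominating the defect recursion $\boldsymbol\alpha^{(j)} = \mathbf 1 - \mathbf z^{(j)}$ with a nonnegative linear map via $1-\hat\theta_k(s) \leq s\,\E[\Theta_k]$ and showing the one-cycle product has spectral radius below one under $\rho<1$ and $\mathbf r \in \mathcal R$. This is sound (the domination chains correctly because the bounding matrices are nonnegative), and in fact it is precisely the row-sum/contraction machinery the paper itself develops later — Lemma \ref{lem:102} and the matrices $\tilde{\mathcal M}_\ell$ in the proof of Theorem \ref{thm:AllMoments} — only applied on the side $\mathbf z \leq \mathbf 1$ rather than $\mathbf z > \mathbf 1$. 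Your route is more self-contained and quantitative (it yields geometric decay of $\mathbf 1 - \mathbf z^{(m)}$ rather than bare convergence), at the cost of front-loading the linear-algebra work; the paper's route is shorter here because it leans on Lemma \ref{lem:stability}. One small tightening: at the very end you need $F_{w(i,m)}(\mathbf z^{(m)}) \arr 1$, not merely $|F_{w(i,m)}(\mathbf z^{(m)})| \leq 1$, to pass to the limit in the product identity; you do have this from $\mathbf z^{(m)} \arr \mathbf 1$ and continuity of the finitely many p.g.f.'s at $\mathbf 1$, so just invoke that rather than the uniform bound.
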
 	
Note that $y^{(j)} := y^{(j)}(\mathbf z, i)$ for all $j \ge 0$, though we remove the dependence on $\mathbf z$ and $i$ from the expression to simplify the notation.
\begin{proof}{Proof.}
        In the paper thus far, we assumed (without loss of generality), that time $0$ is a polling epoch of stage $1$. In this proof,
        we drop this assumption as we need to analyze the transforms when initializing the queue process at a polling epoch for each stage.
	Let time $0$ be a polling epoch of some queue.
	Let $F_{i,m}$ denote the p.g.f.\ of the joint queue length at the polling epoch of stage $i$, $m$ stages after time $0$;
	in particular, we take the stage at time $0$ be $w(i,m)$, for $w$ defined in \eqref{eq:97}.
	In what follows, we derive $F_i$, the p.g.f.\ of the steady-state queue length at the polling epoch of stage $i$, by characterizing the
	p.g.f.\ $F_{i,m}$ and sending $m$ to infinity.
	
	Let $A_i(m)$ and $D_i(m)$ denote, respectively,
	the polling and departure epochs of stage $i$ after the server has completed $m$ visits since time $0$.
	We emphasize the distinction of $A_i(m)$ and $D_i(m)$ here from $A_i^{(m)}$ and $D_i^{(m)}$, which
	denote the polling and departure epochs of stage $i$ in the $m$th server cycle, respectively.	
	To relate $F_{i,m}$ to $F_{i+1,m+1}$, we first note that each of the class-$p(i)$ customers found at the polling epoch $A_i(m)$ independently ``generates"
	a busy period $\Theta_{p(i)}$ with probability $r_i$, and generates $0$ service duration with probability $1-r_i$. If a busy period is generated for a customer,
	then the joint p.g.f.\ for the number of arrivals at the other queues (other than queue $p(i)$) is given by
	\begin{equation*}
	\hat \theta_{p(i)} \off{\sum_{k=1, k\neq p(i)}^K \of{\lambda_k - \lambda_k z_k}  }.
	\end{equation*}
	If a customer is not selected to be served (i.e., a busy period is not generated), then it remains at the current queue and
	there are no corresponding arrivals since the service time of that customer is taken to be $0$. Hence,
	each of the customers found at the polling epoch of stage $i$ is replaced in an i.i.d.\ manner by a random population having p.g.f.\
	\begin{equation*}
	(1-r_i)z_{p(i)} + r_i \hat \theta_{p(i)}  \off{\sum_{k=1, k \neq p(i)}^K \lambda_k (1-z_k) } .
	\end{equation*}
	
	It follows that at time $ D_i(m)$, the joint p.g.f.\ for $Q(D_i(m))$ satisfies
	\begin{equation} \label{eq:102}
	\begin{split}
	&\E\off{\prod_{k=1}^K z_k^{Q_k \of{D_i(m)} }} \\
	= &\E\off{\of{(1-r_{i}) z_{p(i)} + r_{i} \hat \theta_{p(i)} \off{\sum_{k=1, k \neq p(i)}^K \of{\lambda_k - \lambda_k z_k}  }}^{Q_{p(i)}(A_i(m)) }  \prod_{k=1, k\neq p(i)}^K z_k^{Q_k(A_i(m)) }    } \\
	=& F_{i,m}\of{z_1, ...,z_{p(i)-1}, (1-r_{i}) z_{p(i)} + r_{i} \hat \theta_{p(i)} \off{\sum_{k=1, k \neq p(i)}^K \of{\lambda_k - \lambda_k z_k}  }, z_{p(i)+1},...,z_K}.
	\end{split}
	\end{equation}
	
	Now, the joint p.g.f.\ for the number of arrivals during the switchover time from stage $i$ to stage $i+1$ is given by
	\begin{equation} \label{eq:103}
	\hat R_{i}\off{\sum_{k=1}^K \of{\lambda_k - \lambda_k z_k}}.
	\end{equation}
	Since the events during the time interval $[A_i(m), D_i(m))$ are independent of all events in the time interval $[D_i(m), A_{i+1}(m+1)]$,
	the joint p.g.f.\ for $Q$ at time $A_{i+1}(m+1)$, i.e., the polling epoch of stage $i+1$,
	is given by the product of (\ref{eq:102}) and (\ref{eq:103}). Namely,  for $m \geq 1$,
	\begin{equation} \label{eq:104}
	\begin{split}
	&F_{(i+1 \text{ mod } I), m+1} \of{z_1, ...,z_K } = \hat R_{i}\off{\sum_{k=1}^K \of{\lambda_k - \lambda_k z_k}} \times \\
	&F_{i,m}\of{z_1,...,z_{p(i)-1}, (1-r_{i}) z_{p(i)} + r_{i} \hat \theta_{p(i)} \off{\sum_{k=1, k\neq p(i)}^K \of{\lambda_k - \lambda_k z_k}  }, z_{p(i)+1},...,z_K}.
	\end{split}
	\end{equation}

	Now, using (\ref{eq:104}) recursively $m$ times, we have
	\begin{equation} \label{eq:105}
	\begin{split}
	F_{i,m}\of{z_1, z_2,...,z_K } &= \prod_{j=1}^m \hat R_{w(i,j)}\off{\sum_{k=1}^K \of{\lambda_k - \lambda_k z_k^{(j-1)}}}
	F_{w(i,m), 0} (\mathbf z^{(m)})  ,
	\end{split}
	\end{equation}
	where  $\mathbf z^{(0)} =  \mathbf z$, and the elements of $\mathbf z^{(j)}$, for $j \ge 1$, are
	\begin{equation*} \label{eq:106}
	\begin{split}
	z^{(j)}_\ell &=
	\begin{cases}
	z_\ell^{(j-1)} \quad &\text{for } \ell \neq p(w(i,j)) \\
	(1-r_{w(i,j)}) z_\ell^{(j-1)}  + r_{w(i,j)} \hat \theta_\ell \off{\sum_{k=1, k\neq \ell}^K \of{\lambda_k - \lambda_k z_k^{(j-1)} }  } \quad &\text{for } \ell = p(w(i,j)). \\
	\end{cases}
	\end{split}
	\end{equation*}
	Note that $F_{w(i,m),0}$ in (\ref{eq:105}) is the p.g.f.\ of the initial queue length, namely,
	\begin{equation*}
	\begin{split}
	F_{w(i,m), 0}\of{z_1,...,z_K } &= \E\off{\prod_{k=1}^K z_k^{Q_k \of{0} }} .
	\end{split}
	\end{equation*}

	Letting $y^{(j)} := \sum_{k=1}^K \of{\lambda_k - \lambda_k z_k^{(j)}}$ for $j \geq 0$, retrieves \eqref{eq:PGF} and \eqref{eq:PGF_recursion}.
	The	p.g.f.\ $F_{i,m}$ converges to the p.g.f\ corresponding to the steady-state distribution of the queue as $m \tinf$ due to the stability of system under BEP. Since the steady-state limit is independent of the initial distribution,
	it must hold by \eqref{eq:105} that $F_{w(i,m), 0}\of{\mathbf z^{(m)}} \ra 1$, so that $\mathbf z^{(m)} \ra 1$ as $m\tinf$.
	As a result, $y^{(m)} \ra 0$ and $\hat R_{w(i,m)} \off{y^{(m-1)}} \ra 1$ as $m\tinf$. \Halmos
\end{proof}

\section{Sufficient Condition for Existence of All Moments} \label{ap:AllMoments}
We now prove that, if the service-time distribution at each queue has an m.g.f.\ that is finite in some neighborhood of zero, and if the switchover time distribution at each stage has finite m.g.f.\ on the positive real line,
then the steady-state queue length distribution at the polling epochs has finite moments of all orders.
Recall that $S_k$ is a generic random variable distributed according to the service time distribution at queue $k$,
and $A_i$ is the polling epoch of stage $i$ in stationarity, $k \in \mathcal{K}$, $i \in \mathcal{I}$.

\begin{theorem} \label{thm:AllMoments}
	Assume that the following two conditions hold:
	\begin{enumerate}[(a)]
		\item For each $k\in \mathcal{K}$, there exists $\epsilon_k >0$, such that $\E\off{e^{t S_k}} < \infty$ for all $t \in (-\epsilon_k, \epsilon_k)$.
		\item $\E\off{e^{t V_i}} < \infty$ for all $t \geq 0$ and $i \in \mathcal{I}$.
	\end{enumerate}
	Then $\E\off{Q \of{A_i}^\ell} < \infty$ for all $\ell \geq 1$ and $i \in \mathcal{I}$.
\end{theorem}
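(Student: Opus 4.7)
The plan is to show that the p.g.f.\ $F_i$ from Proposition \ref{lem:PGFBinExh} remains finite at a point strictly above $\mathbf 1$; specifically, that there exists $\delta > 0$ for which $F_i(e^\delta, \ldots, e^\delta) < \infty$. This is equivalent to $\E\bigl[e^{\delta \sum_k Q_k(A_i)}\bigr] < \infty$, which implies that $\sum_k Q_k(A_i)$ has finite moments of all orders, and hence so does each $Q_k(A_i)$.

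First, I would translate the hypotheses into analytic extensions of the LSTs that appear in the recursion \eqref{eq:PGF_recursion}. Condition (b) gives that each switchover LST $\hat R_i(y)$ is finite for every $y \in \R$. Condition (a), combined with the busy-period functional equation $\hat\theta_k(y) = \hat S_k\bigl(y + \lambda_k - \lambda_k \hat\theta_k(y)\bigr)$, yields that each $\hat\theta_k$ extends analytically across $y = 0$, so $\hat\theta_k(y) < \infty$ on $(-t_k^*, \infty)$ for some $t_k^* > 0$. Hence, for $\mathbf z$ in a small real neighborhood of $\mathbf 1$ that allows $z_\ell$ slightly above $1$, the right-hand side of \eqref{eq:PGF_recursion} is well-defined.

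Next, initializing the recursion at $\mathbf z^{(0)} = e^\delta \mathbf 1$ with $\delta > 0$ small, I aim to show by induction on $j$ that (i) the iterates $\mathbf z^{(j)}$ and $y^{(j)}$ stay in the domains where all relevant transforms are finite, and (ii) $\|\mathbf z^{(j)} - \mathbf 1\|$ and $|y^{(j)}|$ decay geometrically. The linearization of one full polling cycle of the recursion at $\mathbf 1$ produces a Jacobian whose spectral radius is strictly less than $1$, a fact tied to the stability condition $\rho < 1$ together with $\mathbf r \in \mathcal R$. Standard perturbation arguments then promote this linear contraction to a genuine nonlinear contraction on a sufficiently small neighborhood of $\mathbf 1$, delivering the geometric decay.

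Once (i) and (ii) are in place, convergence of the infinite product $\prod_{j=1}^\infty \hat R_{w(i,j)}[y^{(j-1)}]$ is immediate: since $\hat R_{w(i,j)}[y] = 1 + O(|y|)$ near $y = 0$, the geometric decay of $|y^{(j)}|$ yields absolute summability of $|\log \hat R_{w(i,j)}[y^{(j-1)}]|$. The main obstacle I anticipate is item (i): keeping every argument of $\hat\theta_\ell$ strictly above $-t_\ell^*$ throughout the entire iteration, uniformly in $j$. This requires choosing $\delta$ small enough, quantitatively controlled by the $t_k^*$ from condition (a) and by the contraction constants from the linearization; condition (b) by contrast imposes no restriction because the switchover transforms are finite on all of $\R$. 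Once this bookkeeping is carried out, $F_i(e^\delta \mathbf 1)$ is finite, and the theorem follows.
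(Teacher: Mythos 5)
Your proposal follows essentially the same route as the paper's proof: establish finiteness of $F_i$ at some $\mathbf z > \mathbf 1$ by showing that the iterates $y^{(j)}$ of the recursion \eqref{eq:PGF_recursion} decay geometrically (hence summably), which rests on the spectral radius of the linearized one-cycle map being strictly less than one under $\rho < 1$ and $\mathbf r \in \mathcal R$ --- exactly what the paper's Lemmas \ref{lem:102} and \ref{lem:Y_Bound} carry out via an explicit affine majorant recursion and row-sum bounds. The only cosmetic differences are that the paper controls the infinite product through a correlation inequality (Lemma \ref{lem:PGFUpperBound}) rather than your direct $\log$-summability bound, and builds the contraction by hand from a quantitative Taylor bound on $\E[e^{t\Theta_k}]$ instead of invoking an abstract linearization-plus-perturbation argument.
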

In the proof of Theorem \ref{thm:AllMoments} we will state auxiliary lemmas which are proved in Section \ref{ap:AuxAllMoments}.
However, before giving the proof of this theorem, we state the following lemma, whose proof also appears in Section \ref{ap:AuxAllMoments}.

\begin{lemma} \label{lem:PGFUpperBound}
	Under the conditions of Theorem \ref{thm:AllMoments}, it holds for $\{y^{(j)}: j \geq 0\}$ in \eqref{eq:PGF_recursion}, that
	\begin{equation*}
	\begin{split}
	F_{i}\of{z_1, ...,z_K } &\leq \prod_{\ell \in \mathcal{I}} \, \E\off{  e^{- \sum_{j = 1, \, w(i,j) = \ell}^\infty y^{(j-1)} V_\ell }  } \qforq i \in \mathcal I .
	\end{split}
	\end{equation*}
\end{lemma}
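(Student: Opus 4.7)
The plan is to start from the infinite-product representation of $F_i$ established in Proposition \ref{lem:PGFBinExh} and re-sort the factors according to which stage $\ell = w(i,j)$ each one belongs to. Writing $\hat R_\ell[s] = \E[e^{-sV_\ell}]$, the formula \eqref{eq:PGF} becomes
\begin{equation*}
F_i(\mathbf z) \;=\; \prod_{j=1}^\infty \E\bigl[e^{-y^{(j-1)} V_{w(i,j)}}\bigr] \;=\; \prod_{\ell \in \mathcal I} \; \prod_{j : w(i,j)=\ell} \E\bigl[e^{-y^{(j-1)} V_\ell}\bigr],
\end{equation*}
where the rearrangement is legitimate provided every factor lies in $[0,1]$ and the original product converges. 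To ensure the factors are in $[0,1]$, I will restrict to $\mathbf z \in [0,1]^K$ (the relevant domain of a p.g.f.) and verify by induction that $y^{(j)} \ge 0$ for every $j\ge 0$: the base case $y^{(0)}=\sum_k \lambda_k(1-z_k)\ge 0$ is immediate, and the recursion \eqref{eq:PGF_recursion} keeps each $z_\ell^{(j)} \in [0,1]$ because it is a convex combination of $z_\ell^{(j-1)} \in [0,1]$ and the LST value $\hat\theta_\ell[\,\cdot\,]\in [0,1]$, applied to a nonnegative argument $y^{(j-1)}-\lambda_\ell(1-z_\ell^{(j-1)}) = \sum_{k\ne \ell}\lambda_k(1-z_k^{(j-1)})\ge 0$; then $y^{(j)}=\sum_k\lambda_k(1-z_k^{(j)})\ge 0$. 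Convergence of the product on $[0,1]^K$ follows from Proposition \ref{lem:PGFBinExh}, which already yields $\hat R_{w(i,j)}[y^{(j-1)}]\to 1$.

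With the factors regrouped, the lemma reduces to proving, for each fixed $\ell\in\mathcal I$, the deterministic inequality
\begin{equation*}
\prod_{j : w(i,j)=\ell} \E\bigl[e^{-y^{(j-1)} V_\ell}\bigr] \;\le\; \E\!\left[\exp\!\Bigl(-\sum_{j : w(i,j)=\ell} y^{(j-1)}\,V_\ell\Bigr)\right].
\end{equation*}
This is a classical consequence of Chebyshev's association (correlation) inequality applied to the single random variable $V_\ell$: since all coefficients $y^{(j-1)}$ are nonnegative, every function $v \mapsto e^{-y^{(j-1)} v}$ is nonincreasing in $v$, so any two of them are positively correlated when evaluated at the common random point $V_\ell$, giving
\begin{equation*}
\E\bigl[e^{-aV_\ell} e^{-bV_\ell}\bigr] \;\ge\; \E\bigl[e^{-aV_\ell}\bigr]\,\E\bigl[e^{-bV_\ell}\bigr] \qquad\text{for } a,b\ge 0.
\end{equation*}
A straightforward induction extends this to any finite number of factors, and monotone convergence (all partial products decrease toward their limits because they lie in $[0,1]$) extends it to the countably infinite product indexed by $\{j:w(i,j)=\ell\}$. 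Taking the product over $\ell\in\mathcal I$ of the resulting bounds gives the claimed upper bound on $F_i(\mathbf z)$.

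The main obstacle is not analytic depth but bookkeeping: I need to justify (i) that the rearrangement of the infinite product into stage-indexed subproducts is valid, which rests on the nonnegativity of $y^{(j-1)}$ together with the convergence noted in Proposition \ref{lem:PGFBinExh}, and (ii) that the association-based finite inequality passes cleanly to the infinite-product limit. Both are routine once $y^{(j)}\ge 0$ and the individual factors are confirmed to lie in $[0,1]$, which is the one inductive check genuinely required. Everything else is a direct application of Chebyshev's association inequality with the $V_\ell$'s held fixed across the factors attached to stage $\ell$.
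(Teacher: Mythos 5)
Your core mechanism --- regrouping the infinite product of Proposition \ref{lem:PGFBinExh} by stage, applying Chebyshev's association inequality to comonotone functions of the single random variable $V_\ell$, inducting over finitely many factors, and passing to the limit by monotone convergence --- is exactly the paper's argument (the paper cites a covariance inequality for the same purpose). However, there is a genuine gap in the domain you chose. You restrict to $\mathbf z \in [0,1]^K$ and prove $y^{(j)} \geq 0$, so that every factor lies in $[0,1]$. But on that domain the bound is vacuous for its intended use: $F_i(\mathbf z) \leq 1$ trivially there, and Theorem \ref{thm:AllMoments} needs the lemma precisely for $\mathbf z > \mathbf 1$, where finiteness of $F_i(\mathbf z)$ is what delivers finiteness of all moments. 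In that regime the signs reverse: $y^{(j)} < 0$ for all $j$ (this is claim \textbf{(i)} in the proof of Theorem \ref{thm:AllMoments}), each factor $\E\off{e^{-y^{(j-1)}V_\ell}}$ exceeds $1$, the partial products \emph{increase} rather than decrease, and the finiteness of each factor is not automatic but rests on assumption (b), $\E\off{e^{tV_\ell}} < \infty$ for all $t>0$. Your statements that the factors lie in $[0,1]$ and that ``all partial products decrease toward their limits'' are therefore false exactly where the lemma matters.

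The good news is that your argument survives the sign flip with only bookkeeping changes, which is what the paper does: for $a,b \geq 0$ the maps $v \mapsto e^{av}$ and $v \mapsto e^{bv}$ are both nondecreasing, so the association inequality still gives $\E\off{e^{aV_\ell}e^{bV_\ell}} \geq \E\off{e^{aV_\ell}}\E\off{e^{bV_\ell}}$ (with $a = -y^{(j-1)} > 0$); the rearrangement of the infinite product is justified because all factors are $\geq 1$ (equivalently, the logarithms are nonnegative); and the passage to the infinite product uses monotone convergence for the \emph{increasing} sequence $e^{-\sum_{j\le n} y^{(j-1)}V_\ell}$. You should also note that each $\E\off{e^{-y^{(j-1)}V_\ell}}$ and each partial-sum exponential moment is finite by assumption (b), which is needed before the association inequality can be invoked. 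With the domain corrected to $\mathbf z > \mathbf 1$ and these sign-dependent statements fixed, your proof coincides with the paper's.
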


\begin{proof}{Proof of Theorem \ref{thm:AllMoments}.}
	Since $\E\off{e^{t V_\ell}} < \infty$ for all $t > 0$ and $\ell \in \mathcal{I}$ by assumption,
	the bound in Lemma \ref{lem:PGFUpperBound} is meaningful (finite) if $\sum_{j=0}^\infty y^{(j)} > -\infty$.
	We will show that the latter sum is finite for some $\mathbf z$ by proving that {\bf (i)} $y^{(j)} < 0$ for all $\mathbf z > \mathbf 1$ and
	for all $j \geq 0$, and that {\bf (ii)} there exists some $\mathbf z > \mathbf 1$ such that $\sum_{j=0}^\infty y^{(j)} > -\infty$.
	(Here and in what follows, $\mathbf z > \mathbf 1$ means that $z_k > 1$ for all $k \in \mathcal K$.)
	
	To establish {\bf (i)} and {\bf (ii)}, we first introduce an equivalent characterization of the $y^{(j)}$'s.
	Let $\mathbf y^{(j)} \in \R_+^K$ be the vector whose $k$th element is $ y_k^{(j)} := \lambda_k - \lambda_k z_k^{(j)}$.
	Note that $y^{(j)}$ is the sum of the elements in $\mathbf y^{(j)}$, namely, $y^{(j)} = \sum_{k =1}^K y_k^{(j)}$.
	The recursion calculating the $y^{(j)}$'s in \eqref{eq:PGF_recursion} can be equivalently written as
	\begin{equation} \label{eq:5}
	\begin{split}
	z^{(j)}_k &=
	\begin{cases}
	z_k^{(j-1)} \quad &\text{for } k \neq p(w(i,j)) \\
	(1-r_{w(i,j)}) z_k^{(j-1)} + r_{w(i,j)}  \hat \theta_k \off{ \sum_{\ell \in \mathcal{K}, \ell \neq k} y_\ell^{(j-1)} } \quad &\text{for } k = p(w(i,j)), \\
	\end{cases} \\
	y_{k}^{(j)} &= \lambda_{k} - \lambda_{k} z_{k}^{(j)} \\
	y^{(j)} &= \sum_{k =1}^K y_k^{(j)} ,
	\end{split}
	\end{equation}
	where $z_k^{(0)} =  z_k$ and $ y^{(0)}_k = \lambda_k - \lambda_k z_k$ for all $k \in \mathcal{K}$.
	It follows from \eqref{eq:5} that for $j \geq 1$,
	\begin{align*}
	y_k^{(j)} &= y_k^{(j-1)} \quad \text{for } k \neq p(w(i,j)) \\
	y_k^{(j)} &= \lambda_k - \lambda_k \of{(1-r_{w(i,j)}) z_k^{(j-1)} + r_{w(i,j)}  \hat \theta_k \off{ \sum_{\ell \in \mathcal{K}, \ell \neq k}  y_\ell^{(j-1)} } } \\
	&=  \lambda_k - \lambda_k  z_k^{(j-1)} + \lambda_k r_{w(i,j)} z_k^{(j-1)} - \lambda_k r_{w(i,j)}  \hat \theta_k \off{ \sum_{\ell \in \mathcal{K}, \ell \neq k}  y_\ell^{(j-1)} } \\
	&=  \lambda_k - \lambda_k  z_k^{(j-1)} + \lambda_k r_{w(i,j)} z_k^{(j-1)} - \lambda_k r_{w(i,j)}   - \lambda_k r_{w(i,j)}  \of{ \hat \theta_k \off{ \sum_{\ell \in \mathcal{K}, \ell \neq k}  y_\ell^{(j-1)} } -1 } \\
	&= (1-r_{w(i,j)}) (\lambda_k - \lambda_k  z_k^{(j-1)} ) - \lambda_k r_{w(i,j)}  \of{ \hat \theta_k \off{ \sum_{\ell \in \mathcal{K}, \ell \neq k} y_\ell^{(j-1)} } -1 } \\
	&= (1-r_{w(i,j)}) y_k^{(j-1)} - \lambda_k r_{w(i,j)}  \of{ \hat \theta_k \off{ \sum_{\ell \in \mathcal{K}, \ell \neq k}  y_\ell^{(j-1)} } -1 } \quad \text{for } k = p(w(i,j)) .
	\stepcounter{equation}\tag{\theequation}\label{eq:6}
	\end{align*}
	
	\paragraph{Proof of (i).}
Let $\mathbf z > \mathbf 1$.
	To see $y^{(j)} < 0$ for all $j \geq 0$,
	note that $ y_k^{(0)} = \lambda_k - \lambda_k z_k^{(0)} < 0$ for all $k \in \mathcal{K}$.
	Then, $\hat \theta_k \off{ \sum_{\ell \in \mathcal{K}, \ell \neq k} y_\ell^{(0)} } > 1$ for $k = p(w(i,1))$.
	By \eqref{eq:6},
	\begin{equation*}
	\begin{split}
	y_k^{(1)} &= y_k^{(0)} < 0 \quad \text{for } k \neq p(w(i,1)) \\
	y_k^{(1)} &= (1-r_{w(i,1)})  y_k^{(0)} - \lambda_k r_{w(i,1)}  \of{ \hat \theta_k \off{ \sum_{\ell \in \mathcal{K}, \ell \neq k} y_\ell^{(0)} } -1 } < 0 \quad \text{for } k = p(w(i,1)) .
	\end{split}
	\end{equation*}
	We can then use the arguments inductively to show that $y_k^{(j)} < 0$ for all $k \in \mathcal{K}$ and $j \geq 2$.
	Thus, $y^{(j)} = \sum_{k \in \mathcal{K}} y_k^{(j)} < 0$ for all $j \geq 0$.
	
	
	\paragraph{Proof of (ii).} 

	We take the following steps to prove claim (ii):
	We first transform \eqref{eq:6} into an affine system of equations (see \eqref{eq:10}) which is more amenable to analysis. We show that there exists some $\mathbf z > \mathbf 1$ under which \eqref{eq:10} yields a $K$-dimensional vector $\mathbf{\bar y}^{(j)} := (\bar  y^{(j)}_k, k \in \mathcal{K})$ with $\bar y_k^{(j)} \leq y_k^{(j)} < 0$ for all $k\in \mathcal{K}, j \geq 0$ (see Lemma \ref{lem:Y_Bound}).  Let $\bar y^{(j)} := \sum_{k \in \mathcal{K} } \bar  y^{(j)}_k$. We then show that $\sum_{j=0}^\infty \bar y^{(j)} > -\infty$.
	In particular, we consider the vectors $\mathbf{\tilde y}^{(j)} := (\tilde y^{(j)}_k, k \in \mathcal{K})$ with $\tilde y_k^{(j)} := \bar  y_k^{(j)} / \rho_k$ for all $k \in \mathcal{K}$ and $j \geq 0$. This transformation allows us to derive an appropriate contraction map (see Lemma \ref{lem:102}).We next make the above outline rigorous.
	
	
	
	
	
	By Proposition 4.2 in \cite{nakayama2004finite},
	there exists $\zeta_k >0$ such that $\E\off{e^{t \Theta_k}} < \infty$ for all $t \in [-\zeta_k, \zeta_k]$.
	For $k \in \mathcal{K}$ and $t \in [-\zeta_k/2, \zeta_k/2]$,
	\begin{equation} \label{eq:7}
	\begin{split}
	e^{t \Theta_k} = 1 + \Theta_k t + \frac{1}{2}\Theta_k^2 \, e^{\xi \Theta_k} \, t^2 , \quad w.p.1,
	\end{split}
	\end{equation}
	for some $\xi \in [0,t]$ by Taylor's expansion, where the term $\frac{1}{2}\Theta_k^2 e^{\xi \Theta_k} t^2$ is the Lagrange form of the remainder.
	Since $\xi \leq t \leq \zeta_k/2$, we have
	\begin{equation*}
	e^{t \Theta_k}  \leq 1 + \Theta_k t + \frac{1}{2}\Theta_k^2 e^{\frac{1}{2} \zeta_k \Theta_k} t^2 .
	\end{equation*}
	Taking expectation on both sides of \eqref{eq:7} and applying H\"{o}lder's inequality gives
	\begin{equation} \label{eq:17}
	\begin{split}
	\E\off{ e^{t \Theta_k} }
	&\leq 1 + \E\off{\Theta_k} t + \frac{1}{2} \E\off{ \Theta_k^4}^{\frac{1}{2}} \E\off{e^{\zeta_k \Theta_k} }^{\frac{1}{2}} t^2 .
	\end{split}
	\end{equation}
	
	Take $M_k := \E\off{ \Theta_k^4}^{\frac{1}{2}} \E\off{e^{\zeta_k \Theta_k} }^{\frac{1}{2}}$, and note that $M_k < \infty$ by the choice of $\zeta_k$.
	Let $\alpha > 0$ be such that
	\begin{equation} \label{eq:8}
	\rho_{k} + \sum_{\ell \in \mathcal{K}, \ell \neq k} \rho_\ell (1+\alpha) < 1 \quad \text{for all } k \in \mathcal{K} .
	\end{equation}
	The existence of such $\alpha$ is guaranteed by the fact that $\rho < 1$.
	
	Let
	$$\zeta := \min_{k \in \mathcal{K}} \offf{ \zeta_k/2 \, \wedge \, \frac{2\E\off{\Theta_k}\alpha}{M_k} }.$$
	Now for $t \in [-\zeta, \zeta]$, it holds that
	\begin{equation} \label{eq:18}
	\frac{1}{2} \E\off{ \Theta_k^4}^{\frac{1}{2}} \E\off{e^{\zeta_k \Theta_k} }^{\frac{1}{2}} t^2 = \frac{1}{2} M_k t^2
	\leq \frac{1}{2} M_k  \frac{2\E\off{\Theta_k}\alpha}{M_k} |t| = \E\off{\Theta_k} \alpha |t| ,  \quad k \in \mathcal{K}.
	\end{equation}
	Plugging \eqref{eq:18} into \eqref{eq:17}, we get that for $t \in [-\zeta, \zeta]$,
	\begin{equation} \label{eq:9}
	\E\off{ e^{t \Theta_k} } \leq 1 + \E\off{\Theta_k} t +  \E\off{\Theta_k} \alpha |t|,  \quad k \in \mathcal{K} .
	\end{equation}
	Applying \eqref{eq:9} to \eqref{eq:6}, we see that for $k \in \mathcal{K}$, if $-\zeta \leq \sum_{\ell \in \mathcal{K}, \ell \neq k} y_\ell^{(j-1)} < 0$,
	then
	\begin{equation} \label{eq:27}
	\begin{split}
	\hat \theta_k \off{ \sum_{\ell \in \mathcal{K}, \ell \neq k}  y_\ell^{(j-1)} }
	&= \E\off{e^{-\Theta_k \sum_{\ell \in \mathcal{K}, \ell \neq k}  y_\ell^{(j-1)}}} \\
	&\leq 1 - \E\off{\Theta_k} \sum_{\ell \in \mathcal{K}, \ell \neq k} y_\ell^{(j-1)} - \E\off{\Theta_k} \alpha \sum_{\ell \in \mathcal{K}, \ell \neq k} y_\ell^{(j-1)} \\
	&= 1 - \E\off{\Theta_k} (1+\alpha) \of{  \sum_{\ell \in \mathcal{K}, \ell \neq k} y_\ell^{(j-1)}   } .
	\end{split}
	\end{equation}
	
	Based on the recursion in \eqref{eq:6} and motivated by the upper bound constructed in \eqref{eq:27}, we consider the following recursion
	\begin{equation}  \label{eq:10}
	\begin{split}
	{\bar y}_k^{(j)} &= {\bar y}_k^{(j-1)} \quad \text{for } k \neq p(w(i,j)) \\
	{\bar y}_k^{(j)} &= (1-r_{w(i,j)})   {\bar y}_k^{(j-1)} +  \lambda_k r_{w(i,j)} \E\off{\Theta_k} (1+\alpha) \of{  \sum_{\ell \in \mathcal{K}, \ell \neq k}  {\bar y}_\ell^{(j-1)}   } \\
	&= (1-r_{w(i,j)}) {\bar y}_k^{(j-1)} +  \lambda_k r_{w(i,j)} \frac{1}{\mu_k - \lambda_k} (1+\alpha) \of{  \sum_{\ell \in \mathcal{K}, \ell \neq k} {\bar y}_\ell^{(j-1)}   }   \\
	&= (1-r_{w(i,j)})  {\bar y}_k^{(j-1)} +  r_{w(i,j)} \frac{\rho_k}{1 - \rho_k} (1+\alpha) \of{  \sum_{\ell \in \mathcal{K}, \ell \neq k}  {\bar y}_\ell^{(j-1)}   }
	\qforq k = p(w(i,j)) ,
	\end{split}
	\end{equation}
	where	$ {\bar y}_k^{(0)} =  y_k^{(0)} = \lambda_k - \lambda_k  z_k$, for $k \in \mathcal{K}$.

	For $\ell \in \mathcal{I}$, let $\bar{\mathcal M}_\ell$ denote the $K \times K$ matrix whose $p(\ell)$th row has $1-r_{p(\ell)}$ for the $p(\ell)$th entry,
	and $r_{p(\ell)} \rho_{p(\ell)}(1+\alpha)/(1-\rho_{p(\ell)})$ in all the other entries of this row;
	for $k \neq p(\ell)$, the $k$th row is the vector $\mathbf u_k$ that has $1$ in the $k$th entry and $0$ elsewhere;
	see Example \ref{ex:MatrixM} below for a concrete representation of this matrix for a system with a cyclic polling table with $K=3$.
	
	Let $\mathbf{\bar y}^{(j)} := ({\bar y}_k^{(j)}, k \in \mathcal K)$.
	It follows from	\eqref{eq:10} that $\mathbf{\bar y}^{(j)}$ can be derived via sequential multiplication of matrices $\bar{\mathcal{M}}_\ell$, $\ell \in \mathcal{I}$.
	In particular,
	define the $K \times K$ matrix
	$$\bar{\mathcal M}(i) := \bar{\mathcal M}_{w(i,I)} \bar{\mathcal M}_{w(i,I-1)} \dotsm \bar{\mathcal M}_{w(i,1)}.$$
	It holds that
	\begin{equation*}
	\begin{split}
	\mathbf{\bar y}^{(j)}
	&= \bar{\mathcal{M}}_{w(i,j)} \, \bar{\mathcal{M}}_{w(i,j-1)} \, \dotsm \bar{\mathcal{M}}_{w(i,1)} \, \mathbf{\bar y}^{(0)} \\
	&= \bar{\mathcal{M}}_{w(i,j \text{ mod } I)} \, \bar{\mathcal{M}}_{w(i,(j \text{ mod } I)-1)}  \dotsm \bar{\mathcal{M}}(i)^{\lfloor j/I \rfloor} \, \mathbf{\bar y}^{(0)},
	\end{split}
	\end{equation*}
	where, for $x \in \RR$, $\lfloor x \rfloor$ is the largest integer that is smaller than $x$ (the floor function).
	
	Next, let $\mathbf{\tilde y}^{(j)} := ({\tilde y}^{(j)}_k, k \in \mathcal{K})$, where
	\begin{equation} \label{eq:tilde y}
	{\tilde y}^{(j)}_k := {\bar y}^{(j)}_k / \rho_k, \quad k \in \mathcal{K}, \,\,  j \geq 0.
	\end{equation}
	Let $\tilde{\mathcal{M}}_\ell$ be the $K \times K$ matrix whose $p(\ell)$th row has $1-r_{p(\ell)}$ for the $p(\ell)$th entry, and $r_{p(\ell)} \rho_k(1+\alpha)/(1-\rho_{p(\ell)})$
	for entry $k \neq p(\ell)$, $k \in \mathcal{K}$; for row $k \neq p(\ell)$, it is equal to $\mathbf u_k$.
	(The construction of the $\tilde{\mathcal{M}}_\ell$ matrices is also illustrated in Example \ref{ex:MatrixM}.)
	Similar to $\bar{\mathcal{M}}(i)$, define $\tilde{\mathcal M}(i) := \tilde{\mathcal M}_{w(i,I)} \tilde{\mathcal M}_{w(i,I-1)} \dotsm \tilde{\mathcal M}_{w(i,1)}$.
	By construction,
	\begin{equation} \label{eq:101}
	\begin{split}
	\mathbf{\tilde y}^{(j)} &= \tilde{\mathcal{M}}_{w(i,j)} \, \tilde{\mathcal{M}}_{w(i,j-1)} \dotsm \tilde{\mathcal{M}}_{w(i,1)} \, \mathbf{\tilde y}^{(0)} \\
	&= \tilde{\mathcal{M}}_{w(i,j \text{ mod } I)} \, \tilde{\mathcal{M}}_{w(i,(j \text{ mod } I)-1)}  \dotsm \tilde{\mathcal{M}}(i)^{\lfloor j/I \rfloor}
	\, \mathbf{\tilde y}^{(0)} .
	\end{split}
	\end{equation}

	For $k \in \mathcal K$ define
	\begin{equation*}
	d_k(i) := \min \offf{j \geq 1: p(w(i,j)) = k, \, r_{w(i,j)} > 0},
	\end{equation*}
	namely, $w(i,d_k(i))$ is the last stage in the polling table, prior to stage $i$, that the service ratio of queue $k$ is strictly positive.
	(When all the service ratios of all stages are strictly positive, $w(i,d_k(i))$ is simply the last visit to queue $k$ prior to stage $i$.)
	Since, for BEP with parameter $\mathbf r \in \mathcal R$, it holds that $d_k(i) \leq I$.
	Let $(\mathcal A)_{\sum_k}$ denote the sum of the entries in the $k$th row of the matrix $\mathcal A$.

	\begin{lemma} \label{lem:102}
		For each $k \in \mathcal{K}$,
		it holds that
		\begin{equation*}
		\of{  \tilde{\mathcal M}_{w(i,\ell)} \,  \tilde{\mathcal M}_{w(i,\ell-1)}  \dotsm \tilde{\mathcal M}_{w(i,1)}  }_{\sum k} \leq  1, \quad \text{for all } \ell \geq 1.
		\end{equation*}
		Furthermore,
		\begin{equation*}
		\of{  \tilde{\mathcal M}_{w(i,\ell)}  \, \tilde{\mathcal M}_{w(i,\ell-1)}  \dotsm \tilde{\mathcal M}_{w(i,1)}  }_{\sum k} <  1 , \quad \text{for all } \ell \geq d_k(i).
		\end{equation*}
	\end{lemma}
	
	Now, select $\mathbf z > \mathbf 1$ (sufficiently close to $\mathbf 1$) such that
	\begin{equation} \label{eq:z}
	{\tilde y}_k^{(0)} = {\bar y}_k^{(0)} / \rho_k = (\lambda_k - \lambda_k z_k) / \rho_k \geq -\zeta \rho_k / K, \quad \text{for each } k \in \mathcal{K} .
	\end{equation}
	By the characterization of ${\tilde y}_k^{(j)}$ in \eqref{eq:101} and Lemma \ref{lem:102}, together with the fact that each element in the matrices $\tilde{\mathcal{M}}_\ell$, $\ell \in \mathcal{I}$, is non-negative,  it holds that ${\tilde y}_k^{(j)} \in [-\zeta\rho_k/K, 0)$ for all $k \in \mathcal{K}$ and $j \geq 0$. Thus, ${\bar y}_k^{(j)} \in [-\zeta/K, 0)$ for all $k \in \mathcal{K}$ and $j \geq 0$ by \eqref{eq:tilde y}. It follows that
	\begin{equation} \label{eq:zSuchThat}
	\bar y^{(j)}:=\sum_{k \in \mathcal{K}} {\bar y}_k^{(j)} \in [-\zeta, 0),  \quad \text{for all } j \geq 0 .
	\end{equation}
	
	The next lemma shows that for the choice of $\mathbf z$ in \eqref{eq:z}, the ${\bar y}_k^{(j)}$'s calculated according to the new recursion in \eqref{eq:10} is a lower bound for the $ y_k^{(j)}$'s calculated according to the original recursion in \eqref{eq:6}.

	\begin{lemma} \label{lem:Y_Bound}
		For  $\mathbf z > \mathbf 1$ such that \eqref{eq:z} holds,
		we have ${\bar y}_k^{(j)} \leq y_k^{(j)} < 0$, for all $k \in \mathcal{K}$ and $j \geq 0$.
	\end{lemma}

	By Lemma \ref{lem:Y_Bound},  claim {\bf (ii)} follows if we show that $\sum_{j = 0}^{\infty} {\bar y}^{(j)} > -\infty$, which is equivalent to $\sum_{j = 0}^{\infty} {\tilde y}^{(j)} > -\infty$ by \eqref{eq:tilde y}.
	To this end,
	we group elements in the sequence $\{\mathbf{\tilde y}^{(j)} : j \geq 0\}$ into $I$ subsequences. For $\ell \in \mathcal{I}$, the $\ell$th subsequence is defined as $\{\mathbf{\tilde y}^{(j)} : (j \text{ mod } I) = \ell-1, j \geq 0\}$.
	Without loss of generality, consider the first subsequence  $\{\mathbf{\tilde y}^{(j)} : (j \text{ mod } I) = 0, j \geq 0\}$. By \eqref{eq:101}, this subsequence can be equivalently represented
	as
	\begin{equation*}
	\{\mathbf{\tilde y}^{(j)} : (j \text{ mod } I) = 0 , j \geq 0\} = \{ \tilde{\mathcal M}(i)^n \, \mathbf{\tilde y}^{(0)} : n \geq 0 \} .
	\end{equation*}
	We will show that the partial sum of this subsequence converges, namely,
	\begin{equation} \label{eq:12}
	\lim_{n \arr \infty} \, \sum_{j = 0, \, (j \text{ mod } I) = 0}^n 	\mathbf{\tilde y}^{(j)} =
	\lim_{n \arr \infty} \, \sum_{j = 0, \, (j \text{ mod } I) = 0}^n  \tilde{\mathcal M}(i)^n \, \mathbf{\tilde y}^{(0)} > - \infty .
	\end{equation}
	Note that \eqref{eq:12} holds if and only if $\varrho(\tilde{\mathcal{M}}(i)) < 1$,
	where $\varrho(\tilde{\mathcal{M}}(i))$ is the spectral radius of $\tilde{\mathcal{M}}(i)$.
	To this end, consider the matrix norm $||\cdot||_\infty$ defined as follows
	\begin{equation*}
	||\mathcal{\tilde{\mathcal{M}} }(i) ||_\infty := \max_{k \in \mathcal{K}} (\mathcal{\tilde{\mathcal{M}}}(i))_{\sum k} .
	\end{equation*}
	By Lemma \ref{lem:102} and the fact that $d_k(i) \leq I$,
	it holds that $||\tilde{\mathcal{M}}(i)||_\infty < 1$, so that $\varrho(\tilde{\mathcal{M}}(i)) \leq ||\tilde{\mathcal{M}}(i)||_\infty < 1$.
	Hence, we have the convergence result in \eqref{eq:12}.
	The same lines of reasoning can be applied to the rest $I -1$ subsequences to show that
	\begin{equation*}
	\sum_{\ell \in \mathcal{I}} \, \lim_{n \arr \infty} \, \sum_{j = 0, \, (j \text{ mod } I) = \ell-1}^n 	\mathbf{\tilde y}^{(j)} > - \infty ,
	\end{equation*}
	which in turn implies that $\sum_{j = 0}^{\infty} \bar y^{(j)} > -\infty$, and $\sum_{j = 0}^{\infty} y^{(j)} > -\infty$. \Halmos
\end{proof}

\subsection{Proofs of the Auxiliary Results in the Proof of Theorem \ref{thm:AllMoments}} \label{ap:AuxAllMoments}
\begin{proof} {Proof of Lemma \ref{lem:PGFUpperBound}.}
	Let $\ell \in \mathcal{I}$. The lemma follows if we show that
	\begin{equation*} \label{eq:117}
	\begin{split}
	\prod_{j = 1, \, w(i,j) = \ell}^\infty \E\off{  e^{- y^{(j-1)} V_\ell }  }  \leq \E\off{  e^{- \sum_{j = 1, \, w(i,j) = \ell}^\infty y^{(j-1)} V_\ell }  } .
	\end{split}
	\end{equation*}
	To this end,
	note from \eqref{eq:6} and the proof for claim {\bf (i)} that for any $\mathbf z > \mathbf 1$, we have $\mathbf y^{(j)} < \mathbf 0$ for all $j \geq 0$.
	By the assumption that $\E\off{e^{tV_\ell}} < \infty$ for all $t > 0$ and \cite[p.2]{schmidt2003covariance}, we get
	\begin{equation*}
	\E\off{  e^{- y^{(0)} V_\ell }  } \E\off{  e^{- y^{(1)} V_\ell }  } \leq \E\off{ e^{- y^{(0)} V_\ell } e^{- y^{(1)} V_\ell } }.
	\end{equation*}
	Let $n \geq 1$. For the induction hypothesis, suppose that
	\begin{equation} \label{eq:116}
	\begin{split}
	\prod_{j = 1, \, w(i,j) = \ell}^n \E\off{  e^{- y^{(j-1)} V_\ell }  }  \leq \E\off{  e^{- \sum_{j = 1, \, w(i,j) = \ell}^n y^{(j-1)} V_\ell }  } .
	\end{split}
	\end{equation}
	Then,
	\begin{equation*}
	\begin{split}
	\prod_{j = 1, \, w(i,j) = \ell}^{n+1} \E\off{  e^{- y^{(j-1)} V_\ell }  }
	&= \prod_{j = 1, \, w(i,j) = \ell}^{n} \E\off{  e^{- y^{(j-1)} V_\ell }  } \E\off{  e^{- y^{(n)} V_\ell }  } \\
	&\leq \E\off{  e^{- \sum_{j = 1, \, w(i,j) = \ell}^n y^{(j-1)} V_\ell }  } \E\off{  e^{- y^{(n)} V_\ell }  }  \\
	&\leq  \E\off{  e^{- \sum_{j = 1, \, w(i,j) = \ell}^{n+1} y^{(j-1)} V_\ell }  } ,
	\end{split}
	\end{equation*}
	where the first inequality follows from \eqref{eq:116}, and the second inequality follows from \cite[p.2]{schmidt2003covariance}, together with the fact (implied again by the assumption $\E\off{e^{tV_\ell}} < \infty$ for all $t > 0$) that
	\begin{equation*}
	\E\off{  \of{e^{- \sum_{j = 1, \, w(i,j) = \ell}^n y^{(j-1)} V_\ell }}^2  } < \infty \quad \text{and} \quad \E\off{  \of{e^{- y^{(n)} V_\ell }}^2  } < \infty .
	\end{equation*}
	We have shown by induction that
	\begin{equation*}
	\begin{split}
	\prod_{j = 1, \, w(i,j) = \ell}^n \E\off{  e^{- y^{(j-1)} V_\ell }  }  \leq \E\off{  e^{- \sum_{j = 1, \, w(i,j) = \ell}^n y^{(j-1)} V_\ell }  } \quad \text{for all } n \geq 1.
	\end{split}
	\end{equation*}
	Sending $n$ to infinity gives
	\begin{equation*}
	\begin{split}
	\lim_{n \arr \infty} \prod_{j = 1, \, w(i,j) = \ell}^n \E\off{  e^{- y^{(j-1)} V_\ell }  }  &\leq \lim_{n \arr \infty} \E\off{  e^{- \sum_{j = 1, \, w(i,j) = \ell}^n y^{(j-1)} V_\ell }  } \\
	&=  \E\off{ \lim_{n \arr \infty} e^{- \sum_{j = 1, \, w(i,j) = \ell}^n y^{(j-1)} V_\ell }  },
	\end{split}
	\end{equation*}
	where the interchange of limit and expectation in the second line is justified by the monotone convergence theorem. \Halmos
\end{proof}

\begin{proof} {Proof of Lemma \ref{lem:Y_Bound}.}
	Note from \eqref{eq:6} and the proof for claim {\bf (i)} that for $\mathbf z > \mathbf 1$, we have $\mathbf y^{(j)} < \mathbf 0$ for all $j \geq 0$.
	First, it follows by construction that $\bar y_k^{(0)} = y_k^{(0)} < 0$ for all $k \in \mathcal{K}$.
	
	Then, let
	$k = p(w(i,1))$. Given $\mathbf{ y}^{(0)}$, $\mathbf{ y}^{(1)}$ is calculated as
	\begin{equation*}
	\begin{split}
	{y}_\ell^{(1)} &=  {y}_\ell^{(0)} \quad \text{for } \ell \neq k \\
	{y}_k^{(1)}
	&= (1-r_{w(i,1)})  {y}_k^{(0)} - \lambda_k r_{w(i,1)}  \of{ \hat \theta_k \off{ \sum_{\ell \in \mathcal{K}, \ell \neq k} { y}_\ell^{(0)} } -1 } .
	\end{split}
	\end{equation*}
	Since \eqref{eq:zSuchThat} holds by the choice of $\mathbf z$ in \eqref{eq:z}, and since $\bar{\mathbf y}^{(0)} < \mathbf 0$, we have
	\begin{equation*}
	-\zeta < \sum_{\ell \in \mathcal{K}, \ell \neq k}  {\bar y}_\ell^{(0)} = \sum_{\ell \in \mathcal{K}, \ell \neq k}  {y}_\ell^{(0)}  < 0 ,
	\end{equation*}
	so that $\hat \theta_k \off{ \sum_{\ell \in \mathcal{K}, \ell \neq k}  y_\ell^{(0)} } < \infty$.
	Using the upper bound in \eqref{eq:27}, we get
	\begin{equation} \label{eq:13}
	\begin{split}
	y_k^{(1)}
	&\geq (1-r_{w(i,1)})  y_k^{(0)} - \lambda_k r_{w(i,1)}  \of{ -	\E\off{\Theta_k} (1+\alpha) \sum_{\ell \in \mathcal{K}, \ell \neq k}  y_\ell^{(0)}  } \\
	&= (1-r_{w(i,1)})  y_k^{(0)} +  r_{w(i,1)} \frac{\rho_k}{1 - \rho_k} (1+\alpha) \of{  \sum_{\ell \in \mathcal{K}, \ell \neq k}  y_\ell^{(0)}   }  \\
	&= {\bar y}_k^{(1)} .
	\end{split}
	\end{equation}
	For $\ell \neq k$, we have $ y_\ell^{(1)} = y_\ell^{(0)} = {\bar y}_\ell^{(0)} = {\bar y}_\ell^{(1)}$. Thus, $ {\bar y}_k^{(1)} \leq y_k^{(1)} < 0$ for all $k \in \mathcal{K}$.
	
	Now, let $k = p(w(i,2))$. By \eqref{eq:zSuchThat} (implied by the choice of $\mathbf z$ in \eqref{eq:z}) and the fact that $\bar{\mathbf y}^{(1)} \leq \mathbf y^{(1)} < \mathbf 0$, we have
	\begin{equation*}
	-\zeta < \sum_{\ell \in \mathcal{K}, \ell \neq k}  {\bar y}_\ell^{(1)} = \sum_{\ell \in \mathcal{K}, \ell \neq k}  {y}_\ell^{(1)}  < 0 ,
	\end{equation*}
	so that $\hat \theta_{k} \off{ \sum_{\ell \in \mathcal{K}, \ell \neq p(w(i,2)) } y_\ell^{(1)} } < \infty$. Similarly to \eqref{eq:13}, we have
	\begin{equation*}
	\begin{split}
	y_\ell^{(2)} &=  y_\ell^{(1)} \quad \text{for } \ell \neq k \\
	y_k^{(2)}
	&\geq (1-r_{w(i,2)}) y_k^{(1)} +  r_{w(i,2)} \frac{\rho_k}{1 - \rho_k} (1+\alpha) \of{  \sum_{\ell \in \mathcal{K}, \ell \neq k}  y_\ell^{(1)}   }
	= {\bar y}_k^{(2)} .
	\end{split}
	\end{equation*}
	For $\ell \neq k$, we have $ y_\ell^{(2)} = y_\ell^{(1)} \geq  {\bar y}_\ell^{(1)} = {\bar y}_\ell^{(2)}$. Thus, $ {\bar y}_k^{(2)} \leq y_k^{(2)} < 0$ for all $k \in \mathcal{K}$.
	The same argument can be applied inductively, and the claim follows. \Halmos
\end{proof}

\begin{proof} {Proof of Lemma \ref{lem:102}.}
	By construction,
	\begin{equation*} \label{eq:110}
	\begin{split}
	\of{\tilde{\mathcal{M}}_{w(i,1)}}_{\sum p(w(i,1)) } &= 1-r_{w(i,1)} + \sum_{k \in \mathcal{K}, \, k \neq p(w(i,1))} r_{w(i,1)} \frac{\rho_k(1+\alpha)}{1-\rho_{p(w(i,1))}}  \\
	&= 1- r_{w(i,1)}  \frac{1-\rho_{p(w(i,1))} - \sum_{k \in \mathcal{K}, k \neq p(w(i,1))} \rho_k(1+\alpha) }{1-\rho_{p(w(i,1))}} \\
	&\leq 1 ,
	\end{split}
	\end{equation*}
	where the inequality follows from the choice of $\alpha$ in \eqref{eq:8}, and holds strictly if $r(w(i,1)) > 0$.
	In addition, $\of{\tilde{\mathcal{M}}_{w(i,1)}}_{\sum k} = 1$, for $k \neq p(w(i,1))$, $k \in \mathcal{K}$.

	Now, consider the matrix product $\tilde{\mathcal{M}}_p :=  \tilde{\mathcal{M}}_{w(i,2)} \tilde{\mathcal{M}}_{w(i,1)}$.
	For $k \neq  p(w(i,2)), k \in \mathcal{K}$,
	the $k$th row of $\tilde{\mathcal{M}}_{w(i,2)} $ is equal to $\mathbf u_k$ (the unit vector). Thus, the $k$th row of $\tilde{\mathcal{M}}_p$ is identical to that of $\tilde{\mathcal{M}}_{w(i,1)} $, for all $k \neq  p(w(i,2)), k \in \mathcal{K}$.
	Now, for the $p(w(i,2))$th row, we have
	\begin{equation*} \label{eq:100}
	\begin{split}
	&\of{\tilde{\mathcal{M}}_p}_{\sum p(w(i,2)) } \\
	&=
	(1-r_{w(i,2)}) \of{\tilde{\mathcal{M}}_{w(i,1)}}_{\sum p(w(i,2)) } + \sum_{k \in \mathcal{K}, \, k \neq p(w(i,2)) } r_{w(i,2)} \frac{\rho_k(1+\alpha)}{1-\rho_{p(w(i,2))}} \of{\tilde{\mathcal{M}}_{w(i,1)}}_{\sum k } \\
	&\leq 1-r_{w(i,2)} + \sum_{k \in \mathcal{K}, \, k \neq p(w(i,2)) } r_{w(i,2)} \frac{\rho_k(1+\alpha)}{1-\rho_{p(w(i,2))}} \\
	&= 1- r_{w(i,2)} \frac{1-\rho_{p(w(i,2))} - \sum_{k \in \mathcal{K}, k \neq p(w(i,2)) } \rho_k(1+\alpha) }{1-\rho_{p(w(i,2))}} \\
	&\leq 1 ,
	\end{split}
	\end{equation*}
	where the first inequality follows from
	$\of{\tilde{\mathcal{M}}_{w(i,1)}}_{\sum k} \leq 1$ for all $k \in \mathcal{K}$,
	and the second inequality again follows form the choice of $\alpha$ in \eqref{eq:8}.

	We can extend the arguments by considering multiplying matrix $\tilde{\mathcal{M}}_{w(i,3)}$ to $\tilde{\mathcal{M}}_{p}$, and the proof continues inductively. Importantly, for $k \in\mathcal{K}$, the $k$th row sum of the resulting matrix becomes strictly less than 1 when multiplying $\tilde{\mathcal{M}}_{w(i,d_k(i))}$ to $\tilde{\mathcal M}_{w(i,d_k(i)-1)}   \tilde{\mathcal M}_{w(i,d_k(i)-2)}  \dotsm \tilde{\mathcal M}_{w(i,1)} $. Moreover, the $k$th row sum of $  \tilde{\mathcal M}_{w(i,\ell)} \tilde{\mathcal M}_{w(i,\ell-1)}  \dotsm \tilde{\mathcal M}_{w(i,1)}$ is maintained strictly less than 1 for all $\ell \geq d_k(i)$. \Halmos
\end{proof}

\begin{example} [construction of $\bar{\mathcal{M}}_\ell$ and $\tilde{\mathcal{M}}_\ell$] \label{ex:MatrixM}
	In the case of cyclic polling table with $K =3$, the matrices $\bar{\mathcal{M}}_\ell$, $\ell = 1, 3$, are
	\begin{gather*}
	\bar{\mathcal{M}}_1 =
	\begin{bmatrix}
	1-r_1       & r_1 \frac{\rho_1(1+\alpha)}{1-\rho_1} & r_1 \frac{\rho_1(1+\alpha)}{1-\rho_1}  \\
	0      & 1 & 0  \\
	0       & 0 & 1
	\end{bmatrix}
	, \quad
	\bar{\mathcal{M}}_3 =
	\begin{bmatrix}
	1       & 0 & 0  \\
	0      & 1 & 0  \\
	r_3 \frac{\rho_3 (1+\alpha)}{1-\rho_3}       & r_3 \frac{\rho_3 (1+\alpha)}{1-\rho_3} & 1 - r_3
	\end{bmatrix}.
	\end{gather*}
	In addition, the matrices $\tilde{\mathcal{M}}_\ell$, $\ell = 1, 3$, are
	\begin{gather*}
	\tilde{\mathcal{M}}_1 =
	\begin{bmatrix}
	1-r_1       & r_1 \frac{\rho_2(1+\alpha)}{1-\rho_1} & r_1 \frac{\rho_3(1+\alpha)}{1-\rho_1}  \\
	0      & 1 & 0  \\
	0       & 0 & 1
	\end{bmatrix}
	, \quad
	\tilde{\mathcal{M}}_3 =
	\begin{bmatrix}
	1       & 0 & 0  \\
	0      & 1 & 0  \\
	r_3 \frac{\rho_1(1+\alpha)}{1-\rho_3}       & r_3 \frac{\rho_2(1+\alpha)}{1-\rho_3} & 1 - r_3
	\end{bmatrix}.
	\end{gather*}
\end{example}

\section{The First and Second Moments} \label{sec:FirstAndSecondMoment}
This section is dedicated to the derivation and analysis of the buffer occupancy equations. In Section \ref{ap:FirstMoment}, we show that the first moment of the steady-state queue length at the polling epochs, whose existence follows from the stability condition $\rho < 1$, is the unique solution to the first-order buffer occupancy equations.
In Section \ref{ap:SecondMoment}, we obtain a necessary and sufficient condition for the existence of the second moment, which can be characterized as the unique solution to the second-order buffer occupancy equations.

\subsection{Characterization of the First Moment} \label{ap:FirstMoment}

\begin{theorem} \label{lem:FirstMoment}
	Let $q_k(a_i) := \E\off{Q_k(A_i)} $, $k \in \mathcal K$, $i \in \mathcal I$. Then $(q_k(a_i), k \in \mathcal K, i \in \mathcal I)$ is the unique solution to the following system of $IK$ equations
	\begin{equation} \label{eq:q_e}
	\begin{split}
	q_k(a_{i+1}) &= s_{i} \lambda_k + (1-r_{i}) q_k(a_i), \quad \text{ if } k = p(i) \\
	q_k(a_{i+1})  &= s_{i} \lambda_k + q_k(a_i) +  q_{p(i)}(a_i) r_{i} \lambda_k \frac{1}{ \mu_{p(i)}(1-\rho_{p(i)}) }   ,
	\quad \text{if } k \neq p(i) ,
	\end{split}
	\end{equation}
	where $I + 1 := 1$.
\end{theorem}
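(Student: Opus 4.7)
The plan is to establish the theorem in two parts: (i) verify that the first moments $q_k(a_i) = \E[Q_k(A_i)]$---whose finiteness is given by Lemma \ref{lem:stability}---satisfy \eqref{eq:q_e}, and (ii) prove that \eqref{eq:q_e} has at most one solution. Part (i) I would argue directly from the dynamics of BEP between consecutive polling epochs, without having to differentiate the p.g.f.\ of Proposition \ref{lem:PGFBinExh}. Part (ii) I would obtain by a Perron--Frobenius argument based on a weighted-norm contraction that crucially uses $\rho < 1$.

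For (i), fix $i \in \mathcal I$ and analyze the interval $[A_i, A_{i+1}]$, during which the server first serves queue $p(i)$ for a busy time $B_i$ and then switches for a time $V_i$ with mean $s_i$. For $k \neq p(i)$, queue $k$ accumulates only Poisson arrivals, so $q_k(a_{i+1}) = q_k(a_i) + \lambda_k \E[B_i + V_i]$. Conditioning on $Q_{p(i)}(A_i)$ and using the fact that, under BEP, each selected initial customer generates an independent $M/G/1$ busy period of mean $1/(\mu_{p(i)}(1-\rho_{p(i)}))$, one obtains $\E[B_i] = r_i q_{p(i)}(a_i)/(\mu_{p(i)}(1-\rho_{p(i)}))$, which recovers the second equation of \eqref{eq:q_e}. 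For $k = p(i)$, BEP gives $Q_{p(i)}(D_i) = Q_{p(i)}(A_i) - Y_i(Q_{p(i)}(A_i), r_i)$; taking expectations and adding the $\lambda_{p(i)} s_i$ arrivals during $V_i$ yields the first equation.

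For (ii), write the homogeneous version of \eqref{eq:q_e} as $\mathbf d(a_{i+1}) = \mathcal M_i \mathbf d(a_i)$, where $\mathcal M_i$ is a non-negative $K \times K$ matrix whose $p(i)$-th row has $(1-r_i)$ in column $p(i)$ and zeros elsewhere, and whose $k$-th row ($k \neq p(i)$) has $1$ in column $k$ and $r_i \lambda_k/(\mu_{p(i)}(1-\rho_{p(i)}))$ in column $p(i)$. Set $\Phi := \mathcal M_I \cdots \mathcal M_1$; uniqueness reduces to showing that $1$ is not an eigenvalue of $\Phi$. Introducing the weighted norm $\|\mathbf d\|_\mu := \sum_k |d_k|/\mu_k$, a direct computation for $\mathbf d \geq \mathbf 0$ gives
\begin{equation*}
\|\mathcal M_i \mathbf d\|_\mu = \|\mathbf d\|_\mu - \frac{(1-\rho) \, r_i \, d_{p(i)}}{\mu_{p(i)}(1-\rho_{p(i)})},
\end{equation*}
and since $\mathcal M_i$ has non-negative entries, $|\mathcal M_i \mathbf d| \leq \mathcal M_i |\mathbf d|$ coordinate-wise for general $\mathbf d$, so each $\mathcal M_i$ is non-expanding in $\|\cdot\|_\mu$. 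If the spectral radius of $\Phi$ were $\geq 1$, Perron--Frobenius would furnish a non-negative eigenvector $\mathbf v \neq \mathbf 0$ with $\Phi \mathbf v = \lambda \mathbf v$ and $\lambda \geq 1$; non-expansiveness forces $\lambda = 1$ and equality at every stage of the cycle. The decrement formula then forces $r_i d^{(i-1)}_{p(i)} = 0$ for every $i$ along the trajectory $\mathbf d^{(i)} := \mathcal M_i \cdots \mathcal M_1 \mathbf v$; but whenever $r_i d^{(i-1)}_{p(i)} = 0$ (including the case $r_i = 0$), inspection of the update rule shows $\mathbf d^{(i)} = \mathbf d^{(i-1)}$, so $\mathbf d^{(i)} = \mathbf v$ for all $i$ and $v_{p(i)} = 0$ for every $i$ with $r_i > 0$. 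The hypothesis $\mathbf r \in \mathcal R$ then gives $\mathbf v = \mathbf 0$, a contradiction, so $I - \Phi$ is invertible and the solution is unique.

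The main obstacle is identifying the right weighted norm: $\|\cdot\|_\mu$ is the specific choice (matched to the coefficients in \eqref{eq:q_e}) that extracts the slack $1-\rho$ as the contraction gap, and in un-weighted norms the individual $\mathcal M_i$ need not even be non-expanding. A secondary delicacy is that the $\mathcal M_i$ are only non-expanding rather than strictly contracting, so the argument has to route through a Perron--Frobenius non-negative eigenvector and propagate the equality case across the whole cycle, exploiting the assumption $\mathbf r \in \mathcal R$ only at the very end.
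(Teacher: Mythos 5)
Your proof is correct, and it differs from the paper's in instructive ways. For the derivation of the equations, the paper differentiates the steady-state p.g.f.\ recursion \eqref{eq:111} at $\mathbf z = \mathbf 1$ to obtain \eqref{eq:112}--\eqref{eq:113}, whereas you argue directly on the sample path over $[A_i, A_{i+1}]$ with a Wald-type identity for $\E[B_i]$; your route is more elementary and self-contained (it needs only Lemma \ref{lem:stability} to license the expectations), while the paper's route reuses Proposition \ref{lem:PGFBinExh} and is the one that extends mechanically to the second-order equations in Theorem \ref{lem:SecondMoment}. For uniqueness, your weighted norm $\|\mathbf d\|_\mu = \sum_k |d_k|/\mu_k$ is exactly the paper's change of variables $g_i(k) = f_i(k)/\mu_k$, and your decrement identity $\|\mathcal M_i \mathbf d\|_\mu = \|\mathbf d\|_\mu - (1-\rho)\, r_i\, d_{p(i)}/\bigl(\mu_{p(i)}(1-\rho_{p(i)})\bigr)$ is the same column-sum computation that underlies Lemma \ref{lem:102}; the difference is in how the strict inequality is extracted. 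The paper composes over a full cycle, uses $d_k(i) \le I$ to show every row sum of the cycle product is strictly below one, concludes $\varrho < 1$ from the induced $\infty$-norm, and then cites a fixed-point lemma for affine contractions. You instead stay with a merely non-expanding family, pull out a non-negative Perron--Frobenius eigenvector, and propagate the equality case around the cycle, invoking $\mathbf r \in \mathcal R$ only at the end to kill the eigenvector. Both arguments use the stability slack $1-\rho$ and the definition of $\mathcal R$ in the same essential place; the paper's version is slightly more quantitative (it yields an explicit contraction modulus), while yours cleanly isolates why equality cannot persist. One small point worth making explicit if you write this up: uniqueness of the full $IK$-dimensional system follows because the difference of two solutions satisfies the homogeneous cyclic recursion, hence is a fixed point of $\Phi$ at stage $1$ and therefore zero, after which the recursion forces it to vanish at every stage.
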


\begin{proof}{Proof.}
	Following (\ref{eq:104}), for systems in steady state, the p.g.f.\ $F_{i+1}$ is related to $F_i$ via
	\begin{equation} \label{eq:111}
	\begin{split}
	&F_{(i+1 \text{ mod } I)} \of{z_1, ...,z_K } = \hat R_{i}\off{\sum_{k=1}^K \of{\lambda_k - \lambda_k z_k}} \times \\
	&F_{i}\of{z_1,...,z_{p(i)-1}, (1-r_{i}) z_{p(i)} + r_{i} \hat \theta_{p(i)} \off{\sum_{k=1, \, k\neq p(i)}^K \of{\lambda_k - \lambda_k z_k}  }, z_{p(i)+1},...,z_K} ,
	\end{split}
	\end{equation}
	where we omit the transient time index $m$ in the subscript.
	
	From \eqref{eq:111}, we establish the following system of $IK$ equations for $\of{f_i(k), i \in \mathcal{I}, k \in \mathcal{K}}$ in (\ref{eq:112}) and (\ref{eq:113}) below.
	In particular, taking derivative of (\ref{eq:111}) with respect to $z_k$ and evaluating it $\mathbf z = \mathbf 1$, we get
	\begin{equation} \label{eq:112}
	\begin{split}
	f_{i+1}(k) &= \off{\frac{\partial }{\partial z_k} \hat R_{i}\off{\sum_{j=1}^K \of{\lambda_j - \lambda_j z_j}}}_{\mathbf z = \mathbf 1} + f_i(k) (1-r_{i}) =s_{i} \lambda_k + (1-r_{i}) f_i(k),
	\end{split}
	\end{equation}
	if $k = p(i)$; and
	\begin{align*}
	&f_{i+1}(k) \\
	&= \off{\frac{\partial }{\partial z_k} \hat R_{i} \off{\sum_{j=1}^K \of{\lambda_j - \lambda_j z_j}}}_{\mathbf z = \mathbf 1} + f_i(k) -  f_i(p(i)) r_{i} \lambda_k \hat \theta_{p(i)}^{(1)} \off{\sum_{j=1, j\neq p(i)}^K \of{\lambda_j - \lambda_j z_j}  }_{\mathbf z= \mathbf 1}  \\
	&= s_{i} \lambda_k + f_i(k) -  f_i(p(i))  r_{i} \lambda_k \hat \theta_{p(i)}^{(1)} \off{0  }  \\
	&= s_{i} \lambda_k + f_i(k) +  f_i(p(i))  r_{i} \lambda_k \frac{1}{ \mu_{p(i)}(1-\rho_{p(i)}) }   ,
	\quad \text{if } k \neq p(i).
	\stepcounter{equation}\tag{\theequation}\label{eq:113}
	\end{align*}
	The last equality in \eqref{eq:113} utilizes the fact that for an $M/G/1$ queue, we have
	\begin{equation*}
	-\hat \theta_k^{(1)} \off{0  }= \frac{\E\off{S_k}}{1-\rho_i} = \frac{1}{\mu_k (1-\rho_k)} , \quad \text{for all } k \in \mathcal{K}.
	\end{equation*}
	
	With a change of variable defined by $g_i(k) := f_i(k) / \mu_k$ for $k \in \mathcal{K}$ and $i \in \mathcal{I}$, we get from (\ref{eq:112}) and (\ref{eq:113}) that
	\begin{equation*}
	\begin{split}
	f_{i+1}(k) \frac{1}{\mu_k}
	&= s_{i} \lambda_k \frac{1}{\mu_k} + (1-r_{i}) f_i(k) \frac{1}{\mu_k}, \quad \text{if } k = p(i), \\
	f_{i+1}(k) \frac{1}{\mu_k}
	&= s_{i} \lambda_k \frac{1}{\mu_k} + f_i(k) \frac{1}{\mu_k} +  f_i(p(i))  r_{i} \lambda_k \frac{1}{\mu_{p(i)}(1-\rho_{p(i)})} \frac{1}{\mu_k}  , \quad \text{if } k \neq p(i). \\
	\end{split}
	\end{equation*}
	Therefore,
	\begin{equation} \label{eq:114}
	\begin{split}
	g_{i+1}(k)
	&= s_{i} \rho_k  + (1-r_{i}) g_i(k), \quad \text{if } k = p(i), \\
	g_{i+1}(k)
	&= s_{i} \rho_k  + g_i(k) +  g_i(p(i))  r_{i} \rho_k \frac{1}{1-\rho_{p(i)}}  , \quad \text{if } k \neq p(i). \\
	\end{split}
	\end{equation}

	For $i \in \mathcal{I}$, let $\mathbf {g}_i := \of{g_i(k), k \in \mathcal{K} } \in \RR_+^K$.
	By (\ref{eq:114}), $\mathbf {g}_{i+1}$ can be derived from $\mathbf {g}_{i}$ via an operator $\mathcal T_i: \R_+^{K} \arr \R_+^K$ with the affine relationship
	\begin{equation*} \label{eq:115}
	\mathbf {g}_{i+1} = \mathcal T_i(\mathbf {g}_{i}) = \mathcal A_i \, \mathbf {g}_{i} + \mathcal B_i,
	\end{equation*}
	where $\mathcal A_i$ is an $K \times K$ matrix and $\mathcal B_i \in \RR_+^K$.
	By \eqref{eq:114}, matrix $\mathcal{A}_i$ is characterized as follows: For $j = p(i)$, row $j$ has value $1-r_i$ at the $p(i)$th entry, and value $0$ at all the other entries. For $j \in \mathcal{K}, j \neq p(i)$, row $j$ has value $r_i \rho_j/(1-\rho_{p(i)})$ at the $p(i)$th entry, value $1$ at the $j$th entry, and value $0$ at all the other entries.
	By inspection, $\mathcal A_i$ is identical to the transpose of matrix $\tilde{\mathcal M}_i$ defined in the proof of Theorem \ref{thm:AllMoments}.

	Consider applying the operator $\mathcal T_i$ successively.
	In particular, the composition $\mathcal T_{i+1} \circ \mathcal T_{i} (\mathbf {g}_{i}) $ maps $\mathbf {g}_{i}$ to $\mathbf {g}_{i+2}$, and so forth.
	Then the operator composed over one server cycle $\mathcal T := \mathcal T_{w(i,1)} \circ \mathcal T_{w(i,2)} \circ \dotsm \circ \mathcal T_{w(i,I)}(\mathbf {g}_{i})$ maps $\mathbf {g}_{i}$ to $\mathbf {g}_{i}$, namely, $\mathbf {g}_{i}$ is a fixed point of
	$\mathcal T$.
	Following the same derivation as for $\tilde{\mathcal M}$ in the proof of Theorem \ref{thm:AllMoments},
	we can show that for $\mathcal A := \mathcal A_{i} \mathcal A_{w(i,1)} \dotsm \mathcal A_{w(i,I-1)}$, its spectral radius $\varrho(\mathcal A) < 1$.
	Since $\mathcal T$ is an affine operator mapping from $\R_+^K$ to $\R_+^K$, it follows from Lemma 5.1 in \cite{matveev2016global} that $\mathcal T$ is a contraction map in $\RR_+^K$, and possesses a unique fixed point (i.e., $\mathbf {g}_{i}$), which in turn implies that
	(\ref{eq:112}) and (\ref{eq:113})
	admit a unique solution (in $\RR_+^{IK}$).
	Therefore, it must be the case that $(\E\off{Q_k(A_i)}, k \in \mathcal K, i \in \mathcal I)$ is the unique solution to \eqref{eq:q_e}. \Halmos
\end{proof}

\begin{example} (computing the first moment for cyclic polling table) \label{ex:CyclicBinomial}
	When the polling table is cyclic ($I = K$), the first moment of the steady-state queue length at the polling epochs can be characterized in closed form.
	
	To derive $q_k(a_k)$, note that by (\ref{eq:113}),
	\begin{equation} \label{eq:24}
	\begin{split}
	q_k(a_{\ell + 1}) - q_k(a_{\ell})
	&= s_\ell \lambda_k  + q_{\ell}(a_\ell) r_\ell \lambda_k \frac{1}{\mu_{p(\ell)} (1-\rho_{p(\ell)})} = s_\ell \lambda_k  + q_{\ell}(a_\ell) r_\ell \lambda_k \frac{1}{\mu_{\ell} - \lambda_\ell} ,
	\end{split}
	\end{equation}
	where $K+1 := 1$.
	Summing (\ref{eq:24}) over $\offf{\ell = j , j+1,...,K,1,2,...,k-1}$ for $j > k$, or $\{\ell = j , j+1,...,k-1\}$ for $j < k$, we get
	\begin{equation} \label{eq:25}
	q_k(a_k) - q_k(a_j) = \lambda_k \sum_{\ell=j}^{k-1} s_\ell  + \lambda_k \sum_{\ell=j}^{k-1} q_\ell(a_\ell) r_\ell \frac{1}{\mu_\ell - \lambda_\ell} .
	\end{equation}
	Plugging in $j = k+1$ into (\ref{eq:25}) and using \eqref{eq:112}, we get
	\begin{equation*} \label{eq:26}
	\begin{split}
	&q_k(a_k) - s_k \lambda_k - (1-r_k) q_k(a_k) = \lambda_k \sum_{\ell=1, \ell \neq k}^{K} s_\ell  + \lambda_k \sum_{\ell=1, \ell \neq k}^{K} q_\ell(a_\ell) r_\ell \frac{1}{\mu_\ell - \lambda_\ell} ,
	\end{split}
	\end{equation*}
	from which
	\begin{equation*}
	r_k q_k(a_k) = \lambda_k \sum_{\ell=1}^{K} s_\ell  + \lambda_k \sum_{\ell=1, \ell \neq k}^{K} q_\ell(a_\ell) r_\ell \frac{1}{\mu_\ell - \lambda_\ell} ,
	\end{equation*}
	and thus
	\begin{equation} \label{eq:28}
	q_k(a_k) = \frac{\lambda_k (1-\rho_k)}{r_k} \frac{s}{1-\rho} .
	\end{equation}
	When $r_k =1$ for all $k \in \mathcal{K}$, BEP reduces to the exhaustive policy,  and (\ref{eq:28}) retrieves (4.10a) in \cite{takagi1986analysis}.

	Now, to get $q_j(a_k)$ for $j \neq k$, note that by (\ref{eq:25}), we have
	\begin{equation*}
	q_j(a_j) - q_j(a_k) = \lambda_j \sum_{\ell=k}^{j-1} s_\ell  + \lambda_j \sum_{\ell=k}^{j-1} q_\ell(a_\ell) r_\ell \frac{1}{\mu_\ell - \lambda_\ell} ,
	\end{equation*}
	which implies that
	\begin{equation} \label{eq:23}
	\begin{split}
	q_j(a_k) &= q_j(a_j) - \lambda_j \sum_{\ell=k}^{j-1} s_\ell  - \lambda_j \sum_{\ell=k}^{j-1} q_\ell(a_\ell) r_\ell \frac{1}{\mu_\ell - \lambda_\ell} \\
	&=  \frac{\lambda_j (1-\rho_j)}{r_j} \frac{s}{1-\rho}  - \lambda_j \sum_{\ell=k}^{j-1} s_\ell  - \lambda_j \sum_{\ell=k}^{j-1} \frac{\lambda_\ell (1-\rho_\ell)}{r_\ell} \frac{s}{1-\rho} r_\ell \frac{1}{\mu_\ell - \lambda_\ell} \\
	&= \lambda_j \of{\frac{ (1-\rho_j)}{r_j} \frac{s}{1-\rho} -  \sum_{\ell=k}^{j-1} s_\ell  -  \sum_{\ell=k}^{j-1} \rho_\ell \frac{s}{1-\rho}  } , \quad j \neq k .
	\end{split}
	\end{equation}
	Again, the summation in \eqref{eq:23} is from $\{\ell = k , k+1,...,K,1,2,...,j-1\}$ for $k > j$, and $\{\ell = k , k+1,...,j-1\}$ for $k< j$.
	Similar to the case of $q_k(a_k)$, setting $\mathbf r = \mathbf 1$ (\ref{eq:23}) retrieves (4.10b) in \cite{takagi1986analysis} for the exhaustive policy.
	
	To summarize, the unique solution to the
	$K^2$ equations in \eqref{eq:112} and \eqref{eq:113} is
	\begin{equation*}
	\begin{split}
	&q_k(a_k) = \frac{\lambda_k (1-\rho_k)}{r_k} \frac{s}{1-\rho}, \quad k \in \mathcal K \\
	&q_j(a_k) = \lambda_j \of{\frac{ (1-\rho_j)}{r_j} \frac{s}{1-\rho} -  \sum_{\ell=k}^{j-1} s_\ell  -  \sum_{\ell=k}^{j-1} \rho_\ell \frac{s}{1-\rho}  } , \quad k,j \in \mathcal K, \quad k \neq j .
	\end{split}
	\end{equation*}
	
\end{example}

\subsection{Necessary and Sufficient Condition for the Existence of the Second Moment} \label{ap:SecondMoment}

\begin{theorem} \label{lem:SecondMoment}
	$\E\off{Q_j(A_i) Q_k(A_i)} < \infty$ for all $i \in \mathcal{I}$ and $j, k \in \mathcal{K}$ if and only if
	$\E\off{S_k^2} < \infty$ for all $k \in \mathcal{K}$, and $\E\off{V_i^2} < \infty$ for all $i \in \mathcal{I}$.
\end{theorem}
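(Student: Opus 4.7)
My plan is to argue separately for sufficiency and necessity, both anchored in the second-order buffer occupancy equations obtained by differentiating \eqref{eq:111} twice with respect to $z_j$ and $z_k$ and evaluating at $\mathbf z = \mathbf 1$. This yields a system of $IK^2$ linear equations whose coefficients involve, besides the first moments already handled by Theorem \ref{lem:FirstMoment}, the quantities $\hat R_i''(0) = \E\off{V_i^2}$ and $\hat\theta_k''(0)$, the latter being finite iff $\E\off{S_k^2} < \infty$ by the classical $M/G/1$ busy-period formula $\E\off{\Theta_k^2} = \E\off{S_k^2}/(1-\rho_k)^3$.

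\textbf{Sufficiency.} Assume $\E\off{V_i^2} < \infty$ and $\E\off{S_k^2} < \infty$ for all $i$ and $k$, so every coefficient in the second-order equations is finite. I would first establish that this system admits a unique solution by recycling the contraction argument in the proof of Theorem \ref{lem:FirstMoment}: after rescaling to $h_i(j,k) := f_i(j,k)/(\mu_j \mu_k)$, the cycle operator becomes affine, with its linear part inheriting a tensor structure (from the chain rule applied to the inner transformation of \eqref{eq:111}) built out of products of entries of the matrices $\tilde{\mathcal M}_\ell$ from the proof of Theorem \ref{thm:AllMoments}; the forcing term collects the already-known first moments together with the coefficients $\hat R_i''(0)$ and $\hat\theta_k''(0)$. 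Since Lemma \ref{lem:102} gives $\varrho(\tilde{\mathcal M}(i)) < 1$, the spectral radius of the cycle operator is also strictly less than $1$, and Lemma 5.1 of \cite{matveev2016global} produces a unique fixed point. To identify this fixed point with the actual second moments, I would employ a truncation argument: let $S_k^{(n)} := S_k \wedge n$ and $V_i^{(n)} := V_i \wedge n$, which satisfy the stronger hypotheses of Theorem \ref{thm:AllMoments}; hence, in the truncated system the second moments $f_i^{(n)}(j,k)$ are finite and equal the unique solution to the corresponding truncated buffer occupancy equations. A standard sample-path coupling gives $Q_k^{(n)}(A_i) \uparrow Q_k(A_i)$ pathwise, and since the coefficients of the truncated equations converge monotonically to those of the un-truncated system and the contraction bound is uniform in $n$ (depending only on $\rho$), we obtain $f_i^{(n)}(j,k) \to f_i(j,k)$. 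Monotone convergence then yields $\E\off{Q_j(A_i) Q_k(A_i)} = f_i(j,k) < \infty$.

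\textbf{Necessity.} Suppose first that $\E\off{V_{i_0}^2} = \infty$ for some $i_0 \in \mathcal I$. In steady state, $Q_k(A_{i_0+1}) \geq N_k(V_{i_0})$, where $N_k(V_{i_0})$ is the number of queue-$k$ arrivals during the stage-$i_0$ switchover, a conditionally Poisson variable of mean $\lambda_k V_{i_0}$. Hence $\E\off{Q_k(A_{i_0+1})^2} \geq \E\off{N_k(V_{i_0})^2} \geq \lambda_k^2 \E\off{V_{i_0}^2} = \infty$, contradicting the hypothesis. If instead $\E\off{S_{k_0}^2} = \infty$ for some $k_0 \in \mathcal K$, then $\E\off{\Theta_{k_0}^2} = \infty$. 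Choose a stage $i_0$ with $p(i_0) = k_0$ and $r_{i_0} > 0$; in steady state $\Prob{Q_{k_0}(A_{i_0}) \geq 1} > 0$ (since $\lambda_{k_0} > 0$), so $\Prob{Y_{i_0}(Q_{k_0}(A_{i_0}), r_{i_0}) \geq 1} > 0$, which implies $\E\off{B_{i_0}^2} \geq \Prob{Y_{i_0} \geq 1} \, \E\off{\Theta_{k_0}^2} = \infty$. For any $j \neq k_0$, $Q_j(A_{i_0+1}) \geq N_j(B_{i_0})$ with $N_j(B_{i_0})\,|\,B_{i_0}$ Poisson of mean $\lambda_j B_{i_0}$, whence $\E\off{Q_j(A_{i_0+1})^2} \geq \lambda_j^2 \E\off{B_{i_0}^2} = \infty$, again a contradiction.

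\textbf{Main obstacle.} The most delicate step is the contraction argument for the second-order system. The structural analogy with the first-moment recursion is clear, but the precise form of the linear operator is considerably heavier because of the mixed contributions of $\hat R_i'$, $\hat R_i''$, $\hat\theta_{p(i)}'$, and $\hat\theta_{p(i)}''$, and the bookkeeping of which terms feed into the quadratic part (the part governed by the contraction) versus the forcing term (fed by the already-known first moments) will require some care. The truncation limit argument is otherwise routine once the contraction is uniform in $n$, which holds because the spectral-radius bound in Lemma \ref{lem:102} is expressed purely in terms of $\rho$ and the $r_i$'s, both of which are unaffected by truncating $S_k$ or $V_i$.
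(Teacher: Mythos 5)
Your proposal shares its core with the paper's proof: both differentiate \eqref{eq:111} twice to obtain the second-order buffer occupancy equations, rescale by $1/(\mu_j\mu_k)$, and run a contraction argument over one server cycle (the paper computes the row-sum ``total weight'' of each $g_i(\ell,m)$ explicitly and finds it equals the \emph{square} of the first-order contraction factor $1-r_i(1-\rho)/(1-\rho_{p(i)})$, confirming your tensor-structure intuition, before invoking Lemma 3.2 and Theorem 3.4 of \cite{levy1989delay}). The two proofs part ways at the step you correctly single out as delicate: identifying the unique fixed point with the actual second moments. You propose truncating $S_k$ and $V_i$, invoking Theorem \ref{thm:AllMoments} for the truncated system, and passing to the limit via a monotone sample-path coupling. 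The paper instead observes that the \emph{same} affine cycle operator $\mathcal T$ governs the transient cross-moment matrices $\mathbf F_i^{(m)}$, so starting from any initial condition with finite second moments the iterates $\mathbf F_i^{(m)}$ converge to the unique fixed point, and Fatou's lemma then bounds the stationary cross moments by that limit. The paper's route is strictly lighter: it needs no coupling at all. Your coupling claim ``$Q_k^{(n)}(A_i)\uparrow Q_k(A_i)$ pathwise'' is the one step I would not accept as ``standard'': the polling and departure epochs of the truncated and untruncated systems do not coincide, the binomial selections must be coupled customer-by-customer through the branching (descendant-set) structure, and the monotone comparison must be carried out between the two \emph{stationary} distributions, not just transient sample paths. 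None of this is impossible, but it is a genuine piece of work that your write-up elides, and it is exactly the work the transient-iteration-plus-Fatou device is designed to avoid. I would recommend replacing the truncation step with that device.

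Your necessity argument is a legitimate and arguably more transparent alternative to the paper's: the paper argues by contradiction through the equations themselves (a finite solution would have to satisfy equations with an infinite right-hand side), whereas you exhibit explicit stochastic lower bounds via Poisson arrival counts over the switchover time or the busy time. One small caveat: your $\E\off{S_{k_0}^2}=\infty$ case reads off the divergence from a queue $j\neq k_0$, so it implicitly requires $K\geq 2$; the paper's version of necessity has the same blind spot (for $K=1$ under BEP no $\E\off{S^2}$ term survives in the relevant equation), so this is an issue with the theorem's implicit assumptions rather than with your argument specifically.
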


\begin{proof}{Proof.}
	Utilizing the recursive structure in (\ref{eq:111}) for systems in steady state, we construct a system of $K^2 I$ equations for $\of{f_i(j,k), i \in \mathcal{I}, j, k \in \mathcal{K}}$.
	In particular, taking the partial derivative of (\ref{eq:111}) with respect to $z_j, z_k$ and evaluating at $\mathbf z = \mathbf 1$, we get
	\begin{align*}
	&f_{i+1}(j,k) \\
	&= \left[  \frac{\partial }{\partial z_j} \left[ \of{ \frac{\partial }{\partial z_k}  \hat R_i\off{\sum_{\ell=1}^K \of{\lambda_\ell - \lambda_\ell z_\ell}} }   \times \right. \right.  \\
	&\quad  F_{i}\of{z_1,...,z_{p(i)-1}, (1-r_{i}) z_{p(i)} + r_{i} \hat \theta_{p(i)} \off{\sum_{\ell=1, \ell\neq p(i)}^K \of{\lambda_\ell - \lambda_\ell z_\ell}  }, z_{p(i)+1},...,z_K}  \\
	& \quad + \hat R_i\off{\sum_{\ell=1}^K \of{\lambda_\ell - \lambda_\ell z_\ell}} \times \\
	& \left. \left. \quad  \frac{\partial }{\partial z_k} F_{i}\of{z_1,...,z_{p(i)-1}, (1-r_{i}) z_{p(i)} + r_{i} \hat \theta_{p(i)} \off{\sum_{\ell=1, \ell\neq p(i)}^K \of{\lambda_\ell - \lambda_\ell z_\ell}  }, z_{p(i)+1},...,z_K} \right] \right]_{\mathbf z= \mathbf 1} \\
	&= \off{ \frac{\partial^2 }{\partial z_j \partial z_k}  \hat R_i\off{\sum_{\ell=1}^K \of{\lambda_\ell - \lambda_\ell z_\ell}} }_{\mathbf z= \mathbf 1} + \off{ \frac{\partial }{\partial z_k}  \hat R_i\off{\sum_{\ell=1}^K \of{\lambda_\ell - \lambda_\ell z_\ell}} }_{\mathbf z= \mathbf 1} \times \\
	&\quad  \off{\frac{\partial }{\partial z_j} F_{i}\of{z_1,...,z_{p(i)-1}, (1-r_{i}) z_{p(i)} + r_{i} \hat \theta_{p(i)} \off{\sum_{\ell=1, \ell\neq p(i)}^K \of{\lambda_\ell - \lambda_\ell z_\ell}  }, z_{p(i)+1},...,z_K} }_{\mathbf z= \mathbf 1} \\
	&\quad + \off{ \frac{\partial }{\partial z_j}  \hat R_i\off{\sum_{\ell=1}^K \of{\lambda_\ell - \lambda_\ell z_\ell}} }_{\mathbf z= \mathbf 1} \times \\
	&\quad  \off{\frac{\partial }{\partial z_k} F_{i}\of{z_1,...,z_{p(i)-1}, (1-r_{i}) z_{p(i)} + r_{i} \hat \theta_{p(i)} \off{\sum_{\ell=1, \ell\neq p(i)}^K \of{\lambda_\ell - \lambda_\ell z_\ell}  }, z_{p(i)+1},...,z_K} }_{\mathbf z= \mathbf 1} \\
	&\quad + \off{\frac{\partial^2 }{\partial z_j \partial z_k} F_{i}\of{z_1,...,z_{p(i)-1}, (1-r_{i}) z_{p(i)} + r_{i} \hat \theta_{p(i)} \off{\sum_{\ell=1, \ell\neq p(i)}^K \of{\lambda_\ell - \lambda_\ell z_\ell}  }, z_{p(i)+1},...,z_K} }_{\mathbf z= \mathbf 1}. \stepcounter{equation}\tag{\theequation}\label{eq:29}
	\end{align*}
	There are four summands in the last equation of (\ref{eq:29}), and we characterize each of them as follows.
	
	For now, suppose $j \neq p(i), k \neq p(i)$. First,
	\begin{equation} \label{eq:30}
	\off{ \frac{\partial^2 }{\partial z_j \partial z_k}  \hat R_i\off{\sum_{\ell=1}^K \of{\lambda_\ell - \lambda_\ell z_\ell}} }_{\mathbf z= \mathbf 1}
	= \lambda_j \lambda_k \E\off{V_i^2} .
	\end{equation}
	Second,
	\begin{align*}
	&\off{ \frac{\partial }{\partial z_k}  \hat R_i\off{\sum_{\ell=1}^K \of{\lambda_\ell - \lambda_\ell z_\ell}} }_{\mathbf z= \mathbf 1}  \times \\
	&\quad \off{\frac{\partial }{\partial z_j} F_{i}\of{z_1,...,z_{p(i)-1}, (1-r_{i}) z_{p(i)} + r_{i} \hat \theta_{p(i)} \off{\sum_{\ell=1, \ell\neq p(i)}^K \of{\lambda_\ell - \lambda_\ell z_\ell}  }, z_{p(i)+1},...,z_K} }_{\mathbf z= \mathbf 1}  \\
	&= \lambda_k s_i  \of{f_i(j) +  f_i(p(i))  r_i \hat \theta_{p(i)}^{(1)} \off{0  } (-\lambda_j)} \\
	&= \lambda_k s_i  f_i(j) + \lambda_j \lambda_k s_i f_i(p(i))  r_i \E\off{\Theta_{p(i)}} .
	\stepcounter{equation}\tag{\theequation}
	\end{align*}
	Third,
	\begin{align*}
	&\off{ \frac{\partial }{\partial z_j}  \hat R_i\off{\sum_{\ell=1}^K \of{\lambda_\ell - \lambda_\ell z_\ell}} }_{\mathbf z= \mathbf 1} \times \\
	&\quad \off{\frac{\partial }{\partial z_k} F_{i}\of{z_1,...,z_{p(i)-1}, (1-r_{i}) z_{p(i)} + r_{i} \hat \theta_{p(i)} \off{\sum_{\ell=1, \ell\neq p(i)}^K \of{\lambda_\ell - \lambda_\ell z_\ell}  }, z_{p(i)+1},...,z_K} }_{\mathbf z= \mathbf 1} \\
	&= \lambda_j s_i  \of{f_i(k) +  f_i(p(i))  r_i \hat \theta_{p(i)}^{(1)} \off{0  } (-\lambda_k)} \\
	&= \lambda_j s_i f_i(k) + \lambda_j \lambda_k s_i f_i(p(i))  r_i \E\off{\Theta_{p(i)}} .
	\stepcounter{equation}\tag{\theequation}
	\end{align*}
	Fourth,
	\begin{align*}
	&\off{\frac{\partial^2 }{\partial z_j \partial z_k} F_{i}\of{z_1,...,z_{p(i)-1}, (1-r_{i}) z_{p(i)} + r_{i} \hat \theta_{p(i)} \off{\sum_{\ell=1, \ell\neq p(i)}^K \of{\lambda_\ell - \lambda_\ell z_\ell}  }, z_{p(i)+1},...,z_K} }_{\mathbf z= \mathbf 1} \\
	&= f_i(j, k) +  f_i(p(i),k)  r_i \hat \theta_{p(i)}^{(1)} \off{0  } (-\lambda_j) +  f_i(p(i),j)  r_i \hat \theta_{p(i)}^{(1)} \off{0  } (-\lambda_k) \\
	&\quad +  f_i(p(i),p(i))  r_i^2 \of{\hat \theta_{p(i)}^{(1)} \off{0  }}^2 (-\lambda_j)(-\lambda_k) + f_i(p(i)) r_i \hat \theta_{p(i)}^{(2)} \off{0  } (-\lambda_j)(-\lambda_k) \\
	&= f_i(j, k) +  \lambda_j f_i(p(i),k)  r_i \E\off{\Theta_{p(i)}}  +  \lambda_k f_i(p(i),j)  r_i \E\off{\Theta_{p(i)}}    \\
	&\quad +  \lambda_j\lambda_k f_i(p(i),p(i))  r_i^2 \E\off{\Theta_{p(i)}}^2 + \lambda_j\lambda_k f_i(p(i)) r_i \E\off{\Theta_{p(i)}^2}  . \stepcounter{equation}\tag{\theequation}\label{eq:31}
	\end{align*}
	Plugging in (\ref{eq:30})-(\ref{eq:31}) into (\ref{eq:29}), we have for $j \neq p(i), k \neq p(i)$,
	\begin{equation} \label{eq:32}
	\begin{split}
	f_{i+1}(j,k) &= \lambda_j \lambda_k \E\off{V_i^2} + \lambda_k s_i  f_i(j) + \lambda_j \lambda_k s_i f_i(p(i))  r_i \E\off{\Theta_{p(i)}}  \\
	&\quad + \lambda_j s_i f_i(k) + \lambda_j \lambda_k s_i f_i(p(i))  r_i \E\off{\Theta_{p(i)}} \\
	&\quad + f_i(j, k) +  \lambda_j f_i(p(i),k)  r_i \E\off{\Theta_{p(i)}}  +  \lambda_k f_i(p(i),j)  r_i \E\off{\Theta_{p(i)}}    \\
	&\quad +  \lambda_j\lambda_k f_i(p(i),p(i))  r_i^2 \E\off{\Theta_{p(i)}}^2 + \lambda_j\lambda_k f_i(p(i)) r_i \E\off{\Theta_{p(i)}^2}  .
	\end{split}
	\end{equation}
	Then, using the fact that for an $M/G/1$ queue,
	\begin{equation*}
	\E\off{\Theta_{p(i)}} = - \hat \theta_{p(i)}^{(1)} \off{0  } = \frac{\E\off{S_{p(i)}}}{1-\rho_{p(i)}},
	\quad
	\E\off{\Theta_{p(i)}^2} = \hat \theta_{p(i)}^{(2)} \off{0  } = \frac{\E\off{S_{p(i)}^2}}{(1-\rho_{p(i)})^3},
	\end{equation*}
	we get from (\ref{eq:32}) that for $j \neq p(i), k \neq p(i)$,
	\begin{align*}
	f_{i+1}(j,k) &= \lambda_j \lambda_k \E\off{V_i^2} + \lambda_k s_i  f_i(j) + \lambda_j \lambda_k s_i f_i(p(i))  r_i \frac{\E\off{S_{p(i)}}}{1-\rho_{p(i)}}  \\
	&\quad + \lambda_j s_i f_i(k) + \lambda_j \lambda_k s_i f_i(p(i))  r_i \frac{\E\off{S_{p(i)}}}{1-\rho_{p(i)}} \\
	&\quad + f_i(j, k) +  \lambda_j f_i(p(i),k)  r_i \frac{\E\off{S_{p(i)}}}{1-\rho_{p(i)}}  +  \lambda_k f_i(p(i),j)  r_i \frac{\E\off{S_{p(i)}}}{1-\rho_{p(i)}}    \\
	&\quad +  \lambda_j\lambda_k f_i(p(i),p(i))  r_i^2 \of{\frac{\E\off{S_{p(i)}}}{1-\rho_{p(i)}}}^2 + \lambda_j\lambda_k f_i(p(i)) r_i \frac{\E\off{S_{p(i)}^2}}{(1-\rho_{p(i)})^3}  .
	\stepcounter{equation}\tag{\theequation}\label{eq:35}
	\end{align*}

	Similarly, we get the following systems of equations for $f_{i+1}(j,k)$ for $j = p(i) \neq k $, $k = p(i) \neq j$, and $j = p(i) = k$:
	For $j = p(i) \neq k $,
	\begin{equation} \label{eq:33}
	\begin{split}
	f_{i+1}(j,k) &=   \lambda_{p(i)} \lambda_{k} \E\off{V_i^2} + \lambda_{k} s_i  f_i(p(i)) (1-r_i)  + \lambda_{p(i)} s_i f_i(k) + \lambda_{p(i)} \lambda_k s_i f_i(p(i))  r_i \frac{\E\off{S_{p(i)}}}{1-\rho_{p(i)}}  \\
	&\quad + f_i(p(i),k) (1-r_i) + \lambda_k  f_i(p(i),p(i))  r_i (1-r_i) \frac{\E\off{S_{p(i)}}}{1-\rho_{p(i)}} .
	\end{split}
	\end{equation}
	For $k = p(i) \neq j$ (this case follows from \eqref{eq:33} by symmetry),
	\begin{equation} \label{eq:33_2}
	\begin{split}
	f_{i+1}(j,k) &=   \lambda_{p(i)} \lambda_{j} \E\off{V_i^2} + \lambda_{j} s_i  f_i(p(i)) (1-r_i)   + \lambda_{p(i)} s_i f_i(j) + \lambda_{p(i)} \lambda_j s_i f_i(p(i))  r_i \frac{\E\off{S_{p(i)}}}{1-\rho_{p(i)}}  \\
	&\quad + f_i(j, p(i)) (1-r_i) + \lambda_j  f_i(p(i),p(i))  r_i (1-r_i) \frac{\E\off{S_{p(i)}}}{1-\rho_{p(i)}} .
	\end{split}
	\end{equation}
	For $j = p(i) = k $,
	\begin{equation} \label{eq:34}
	\begin{split}
	f_{i+1}(j,k) &=   \lambda_{p(i)} \lambda_{p(i)} \E\off{V_i^2}  + 2 \lambda_{p(i)} s_i  f_i(p(i)) (1-r_i) + f_i(p(i), p(i)) (1-r_i)^2 .
	\end{split}
	\end{equation}
	The derivation of (\ref{eq:33})--(\ref{eq:34}) is similar to that of \eqref{eq:35} and thus omitted.

	Let $\mathbf{F}_i$ be a $K \times K$ matrix whose elements are $f_i(j, k)$, $i \in \mathcal{I}$, $j, k \in \mathcal{K}$.
	Specifically, whenever {\em the second moments of queue length exist}, the $(k,k)$th element of $\mathbf{F}_i$ is  $\E\off{Q_k(A_i) \of{Q_k(A_i)-1} }$, and the $(j,k)$th element is  $\E\off{Q_j(A_i)Q_j(A_i)}$ for $j \neq k$.
	The affine system defined by \eqref{eq:35}--\eqref{eq:34} calculates $\mathbf{F}_{i+1}$ from $\mathbf{F}_i$. In particular, these equations can be viewed as a transformation $\mathcal{T}_i : \RR_+^{K \times K} \arr \RR_+^{K \times K}$, such that $\mathbf{F}_{i+1} = \mathcal{T}_i \of{\mathbf{F}_i}$, $i \in \mathcal{I}$.
	
	For the following, take without loss of generality that $i = 1$. (The same arguments hold for all $i  \in \mathcal{I} \backslash \{1\}$.)
	Let $\mathcal{T} : \RR_+^{K \times K} \arr \RR_+^{K \times K}$ be the composition
	$\mathcal{T} := \mathcal{T}_{I} \circ \mathcal{T}_{I-1} \circ \dotsm \circ \mathcal{T}_1$. Then, since the system is in steady state,
	$\mathcal{T}(\mathbf{F}_1) = \mathbf{F}_{(I+1) \text{ mod } I} = \mathbf{F}_{1}$,
	so that $\mathbf{F}_1$
	is a fixed point of $\mathcal{T}$. We will show next that $\mathcal{T}$ is a contraction map and possesses a unique fixed point. This implies that the unique fixed point $\mathbf{F}_1$ can be acquired by successively applying $\mathcal{T}$ to any initial condition.

	Note that we can express $f_{i+1}(j , k)$ as
	\begin{equation} \label{eq:53}
	f_{i+1}(j, k) = \sum_{\ell, m} f_i (\ell, m) w_i(\ell, m, j, k) + b_i(j, k) ,
	\end{equation}
	where $w_i(\ell, m,j,k)$ is said to be the weight of $f_i(\ell, m)$ in the element $f_{i+1}(j,k)$, and $b_i(j,k)$ is the sum of all the constant terms in the corresponding equation from \eqref{eq:35}--\eqref{eq:34}.
	Specifically, by \eqref{eq:35}--\eqref{eq:34}, we have
	\begin{align*}
	f_{i+1}(j,k)  =
	\begin{cases}
	f_i(j, k) +  \lambda_j f_i(p(i),k)  r_i \frac{\E\off{S_{p(i)}}}{1-\rho_{p(i)}}  +  \lambda_k f_i(p(i),j)  r_i \frac{\E\off{S_{p(i)}}}{1-\rho_{p(i)}}    \\
	\quad  +  \lambda_j\lambda_k f_i(p(i),p(i))  r_i^2 \of{\frac{\E\off{S_{p(i)}}}{1-\rho_{p(i)}}}^2 + b_i(j,k), \quad \text{for } j \neq p(i), k \neq p(i) \\
	f_i(p(i),k) (1-r_i) + \lambda_k  f_i(p(i),p(i))  r_i (1-r_i) \frac{\E\off{S_{p(i)}}}{1-\rho_{p(i)}} + b_i(j,k) , \\ \quad \text{for } j = p(i) \neq k \\
	f_i(j, p(i)) (1-r_i) + \lambda_j  f_i(p(i),p(i))  r_i (1-r_i) \frac{\E\off{S_{p(i)}}}{1-\rho_{p(i)}} + b_i(j,k) ,  \\ \quad  \text{for } k = p(i) \neq j  \\
	f_i(p(i), p(i)) (1-r_i)^2 + b_i(j,k), \quad \text{for } j = p(i) = k .
	\end{cases}
	\stepcounter{equation}\tag{\theequation}\label{eq:50}
	\end{align*}

	To aid the analysis, we apply a substitution of variable
	\begin{equation*}
	g_i(j,k) := \frac{1}{\mu_j \mu_k} f_i(j,k), \quad b'_i(j,k) := \frac{1}{\mu_j \mu_k} b_i(j,k) , \quad i \in \mathcal{I}, \quad  j, k \in \mathcal{K} .
	\end{equation*}
	Analogously to $\mathbf F_i$, let $\mathbf{G}_i$ be a $K \times K$ matrix whose elements are $g_i(j,k)$, $i \in \mathcal{I}$, $j ,k \in \mathcal{K}$.
	Similarly to $\mathcal{T}_i$, let $\mathcal{T}'_i : \RR_+^{K \times K} \arr \RR_+^{K \times K}$ be the transformation such that $\mathbf{G}_{i+1} = \mathcal{T}'_i \of{\mathbf{G}_i}$, $i \in \mathcal{I}$. Let $\mathcal{T}' := \mathcal{T}'_{I} \circ \mathcal{T}'_{I-1} \circ \dotsm \circ \mathcal{T}'_1$, so that $\mathcal{T}'(\mathbf{G}_1) = \mathbf{G}_{(I+1) \text{ mod } I} = \mathbf G_1$.
	
	Dividing both sides of \eqref{eq:50} by $\mu_j \mu_k$ yields
	\begin{equation*}
	\frac{1}{\mu_j \mu_k} f_{i+1}(j,k)  =
	\begin{cases}
	\frac{1}{\mu_j \mu_k}f_i(j, k) +  \frac{1}{\mu_j \mu_k} \lambda_j f_i(p(i),k)  r_i \frac{\E\off{S_{p(i)}}}{1-\rho_{p(i)}}  + \frac{1}{\mu_j \mu_k} \lambda_k f_i(p(i),j)  r_i \frac{\E\off{S_{p(i)}}}{1-\rho_{p(i)}}    \\
	\quad  + \frac{1}{\mu_j \mu_k} \lambda_j\lambda_k f_i(p(i),p(i))  r_i^2 \of{\frac{\E\off{S_{p(i)}}}{1-\rho_{p(i)}}}^2 + \frac{1}{\mu_j \mu_k} b_i(j,k), \\ \quad \text{for } j \neq p(i), k \neq p(i) \\
	\frac{1}{\mu_j \mu_k} f_i(p(i),k) (1-r_i) + \frac{1}{\mu_j \mu_k} \lambda_k  f_i(p(i),p(i))  r_i (1-r_i) \frac{\E\off{S_{p(i)}}}{1-\rho_{p(i)}}  + \frac{1}{\mu_j \mu_k} b_i(j,k), \\ \quad \text{for } j = p(i) \neq k \\
	\frac{1}{\mu_j \mu_k} f_i(j, p(i)) (1-r_i) + \frac{1}{\mu_j \mu_k} \lambda_j  f_i(p(i),p(i))  r_i (1-r_i) \frac{\E\off{S_{p(i)}}}{1-\rho_{p(i)}} + \frac{1}{\mu_j \mu_k} b_i(j,k) ,  \\ \quad  \text{for } k = p(i) \neq j  \\
	\frac{1}{\mu_j \mu_k} f_i(p(i), p(i)) (1-r_i)^2 + \frac{1}{\mu_j \mu_k} b_i(j,k) , \\ \quad \text{for } j = p(i) = k ,
	\end{cases}
	\end{equation*}
	and
	\begin{equation} \label{eq:54}
	g_{i+1}(j,k)  =
	\begin{cases}
	g_i(j, k) +   g_i(p(i),k)  r_i \frac{\rho_j}{1-\rho_{p(i)}}  +  g_i(p(i),j)  r_i \frac{\rho_k}{1-\rho_{p(i)}}    \\
	\quad  +   g_i(p(i),p(i))  r_i^2 \frac{\rho_j \rho_k}{(1-\rho_{p(i)})^2} + b'_i(j,k) ,  \quad \text{for } j \neq p(i), k \neq p(i) \\
	g_i(p(i),k) (1-r_i) +   g_i(p(i),p(i))  r_i (1-r_i) \frac{\rho_k}{1-\rho_{p(i)}} + b'_i(j,k), \\ \quad \text{for } j = p(i) \neq k \\
	g_i(j,p(i)) (1-r_i) +   g_i(p(i),p(i))  r_i (1-r_i) \frac{\rho_j}{1-\rho_{p(i)}} + b'_i(j,k), \\ \quad \text{for } j = p(i) \neq k  \\
	g_i(p(i), p(i)) (1-r_i)^2 + b'_i(j,k) ,  \quad \text{for } j = p(i) = k .
	\end{cases}
	\end{equation}
	
	We thus arrive at the following analogue of \eqref{eq:53}
	\begin{equation*}
	g_{i+1}(j, k) = \sum_{\ell, m} g_i (\ell, m) \, w'_i(\ell, m, j, k) + b'_i(j, k) , \quad i \in \mathcal{I}, \quad j, k \in \mathcal{K} .
	\end{equation*}
	where $w'_i(\ell, m, j, k)$ is the weight of $g_i(\ell, m)$ in the element $g_{i+1}(j,k)$ in \eqref{eq:54}.
	The total weight of $g_i(\ell, m)$ in the matrix $\mathbf{G}_{i+1}$ is given by
	\begin{equation} \label{eq:55}
	\sum_{j, k} w'_i(\ell, m, j, k) =
	\begin{cases}
	1,   \quad \text{for } \ell \neq p(i), m \neq p(i) \\
	r_i \frac{\rho - \rho_{p(i)} }{1-\rho_{p(i)} } + (1-r_i), \\  \quad \text{for } \ell = p(i) \text{ and } m \neq p(i), \text{ or } \ell \neq p(i) \text{ and } m = p(i) \\
	(1-r_i)^2 + 2 \sum_{k \neq p(i)} r_i(1-r_i) \frac{\rho_k}{1-\rho_{p(i)}} + \sum_{j \neq p(i), k \neq p(i)} r_i^2 \frac{\rho_j \rho_k}{(1-\rho_{p(i)})^2},   \\ \quad \text{for } \ell = p(i), m = p(i) .\\
	\end{cases}
	\end{equation}
	Note from the second and third lines in \eqref{eq:55} that
	\begin{equation*}
	r_i \frac{\rho - \rho_{p(i)} }{1-\rho_{p(i)} } + (1-r_i) = 1 - r_i  \frac{1-\rho }{1-\rho_{p(i)}},
	\end{equation*}
	and
	\begin{equation*}
	\begin{split}
	&(1-r_i)^2 + 2 \sum_{k \neq p(i)} r_i(1-r_i) \frac{\rho_k}{1-\rho_{p(i)}} + \sum_{j \neq p(i), k \neq p(i)} r_i^2 \frac{\rho_j \rho_k}{(1-\rho_{p(i)})^2} \\
	&= \of{r_i \frac{\rho - \rho_{p(i)} }{1-\rho_{p(i)} } + (1-r_i)}^2 \\
	&= \of{1 - r_i  \frac{1-\rho }{1-\rho_{p(i)}}}^2,
	\end{split}
	\end{equation*}
	both of which are equal to 1 if $r_i = 0$ and strictly smaller than $1$ if $r_i > 0$.
	Since
	the service ratios $\mathbf r \in \mathcal{R}$,
	it follows from \eqref{eq:55} that the total weight of $g_1(\ell, m)$
	strictly decreases through the application of $\mathcal{T}' = \mathcal{T}'_{I} \circ \mathcal{T}'_{I-1} \circ \dotsm \circ \mathcal{T}'_1$.
	Given this ``contraction" property of the total weight of each element $g_1(\ell, m)$ through $\mathcal{T}'$, we can apply the same arguments as in \cite{levy1989delay} (specifically, Lemma 3.2 and Theorem 3.4) to show that $\mathcal{T}'$ is indeed a contraction map.

	The derivation above establishes that if
	$\E\off{Q_j \of{A_i } Q_k \of{A_i} } < \infty$ for all $i \in \mathcal{I}$ and $j, k \in \mathcal{K}$,
	then $\mathbf F_1$
	is the unique fixed point of the operator $\mathcal{T}$. We next verify the premise of the aforementioned claim.
	When considering the transformation in \eqref{eq:35}--\eqref{eq:34}, we assume that the system is in steady state,
	namely, $\mathbf{F}_1$ is a fixed point of the operator $\mathcal{T}$.
	Indeed, \eqref{eq:35}--\eqref{eq:34} and $\mathcal{T}$ also apply to systems in transiency, as can be seen in the proof of Proposition \ref{lem:PGFBinExh}. In particular, let the superscript $(m)$ be the index of the $m$th server cycle, and let $\mathbf{F}_i^{(m)}$ denote the cross moment matrix of the queue length at the polling epoch of stage $i$ in the $m$th server cycle. Given $\mathbf{F}_1^{(m)}$,
	$\mathcal{T}(\mathbf{F}_1^{(m)}) = \mathbf{F}_1^{(m+1)}$ for all $m \geq 1$.
	Since $\mathcal{T}$ is a contraction map, $\mathbf{F}_1^{(m)}$ converges through successive application of $\mathcal{T}$.
	The same holds for all $\mathbf{F}_i^{(m)}$, $i \in \mathcal{I} \backslash \{1\}$, and their corresponding operators $\mathcal{T}(i) : \RR_+^K \arr \RR_+^K$ such that $\mathcal{T}(i) (\mathbf{F}_i^{(m)}) = \mathbf{F}_i^{(m+1)}$. (Note that $\mathcal{T}(1) \equiv \mathcal T$.)
	In other words, for any initial queue length,
	$\E\off{Q_j ( A_i^{(m)} ) Q_k (A_i^{(m)}) }$ converges to a proper finite value as $m  \arr \infty$. By Fatou's lemma, it holds that
	\begin{equation*} \label{eq:56}
	\begin{split}
	\lim_{m \arr \infty} \E\off{Q_j (A_i^{(m)} ) Q_k (A_i^{(m)}) }
	&\geq \E\off{\liminf_{m \arr \infty} Q_j (A_i^{(m)} ) Q_k (A_i^{(m)}) }  \\
	&= \E\off{Q_j \of{A_i } Q_k \of{A_i} },   \quad i \in \mathcal{I}, \quad j, k \in \mathcal{K} .
	\end{split}
	\end{equation*}
	
	The condition that $\E\off{S_k^2} < \infty$ for all $k \in \mathcal{K}$, and that $\E\off{V_i^2} < \infty$ for all $i \in \mathcal{I}$, is also necessary in order for the second moment to exist. To see this, suppose for the sake of contradiction that there exists some $k \in \mathcal K$ (or $i \in \mathcal I$) such that $\E\off{S_k^2} = \infty$ (or $\E\off{V_i^2} = \infty$), but $\E\off{Q_j(A_i) Q_k(A_i)} < \infty$ for all $i \in \mathcal{I}$ and $j, k \in \mathcal{K}$.
	It follows from the differentiation of the p.g.f.\ recursion that $(\E\off{Q_j(A_i) Q_k(A_i)}, i \in \mathcal{I}, j, k \in \mathcal{K})$ is a solution to the second-order buffer occupancy equations \eqref{eq:35}--\eqref{eq:34}. However, the right-hand side of \eqref{eq:35}--\eqref{eq:34} is infinite due to $\E\off{S_k^2} = \infty$ (or $\E\off{V_i^2} = \infty$), a contradiction. \Halmos
\end{proof}

\begin{remark} \label{rem:SecondMoment}
In the proof of Theorem \ref{lem:SecondMoment}, we first use a contraction mapping argument to show that the second-order buffer occupancy equations admit a unique solution.
We then note that the same operator (i.e., the contraction map) applies to the transient system, which ``tracks" the second moments of the queue length during the transient period.
The existence of the second and cross moments follows by applying Fatou's lemma, the values of which are necessarily equal to the unique solution to the buffer occupancy equations.
The fact that the buffer occupancy approach applies to the system in the transient period holds for branching-type controls beyond BEP (see, e.g., \cite{whitt2002stochastic,levy1989delay}). Thus, it follows from similar arguments as those in the proof of Theorem \ref{lem:SecondMoment} that the existence of a unique solution to the $p$th-order buffer occupancy equations implies that the $p$th moments exist, in all settings for which the equations are available.
Nevertheless, due to the curse of dimensionality, deriving buffer occupancy equations of order higher than two and proving the existence of a unique solution is prohibitive.
This method is therefore restricted to characterizing, and more fundamentally, proving the existence of only the first few moments.
\end{remark}

\section{Asymptotic Approximation of Moments} \label{ap:AsympMoment}

In this section, we develop a simple approximation scheme for the moments of the steady-state queue length and busy times, which is proved to be asymptotically accurate as the switchover times grow without bound.
To carry out the asymptotic analysis, we consider a sequence of systems indexed by $n \geq 1$,
and append a superscript $n$ to all random variables and processes that scale with $n$.
Let $V^n_i$ denote the switchover time from stage $i$ in system $n$.
Under the large-switchover-time scaling, we keep $\lm_k$ and $\mu_k$ fixed (they do not scale with $n$),
and impose the following assumptions on the sequence of switchover times.

\begin{assumption} \label{assum1}
	$\bar V_i^n := V_i^n/n \Arr  s_{i} $ as $n \arr \infty$. Further,
	$\E\off{V_i^n} = n s_i$\, for all $i \in \mathcal{I}$.
\end{assumption}

We assume that each system in the sequence operates under BEP with the same parameter $\mathbf r \in \mathcal R$, for all $n \geq 1$.
For the $n$th system, we use $Q^n$ to denote the queue length process. In addition, we let $A_i^{n,(m)}$, $D_i^{n,(m)}$, and $B_i^{n,(m)}$ denote the polling epoch, departure epoch, and busy time corresponding to stage $i$ in the $m$th cycle, respectively.

Fluid-scaled quantities are denoted with a bar. In particular,  for the $n$th system, $\bar Q^n(\cdot) := Q^n(n\cdot)/n$ is the fluid-scaled queue length process.
In addition, for
\begin{equation*}
\bar A_1^{(m),n} = A_1^{(m),n} / n \qandq \tilde Q^n(m) = \bar Q^n(\bar A_1^{(m),n}) , \quad m \geq 1,
\end{equation*}
$\tilde Q^n := \{\tilde Q^n(m): m \geq 1\}$ is the fluid-scaled embedded DTMC at the polling epochs of stage $1$ in each cycle.

We assume that the process $Q^n$ is stationary for all $n \geq 1$, namely, $\bar Q^n(0) \deq \tilde Q^n(\infty)$, where $\tilde Q^n(\infty)$ is a generic random variable distributed according to the stationary distribution of $\tilde Q^n$.
Following the convention thus far, we drop the transient time index $(m)$ for steady-state quantities. For example, we let $\bar Q^n (\bar A_i^n)$ be a generic random variable following the distribution of the steady-state queue length at the polling epoch of stage $i$ for the $n$th system.

\subsection{Asymptotic Moments of the Queue Length}

\begin{theorem}
	\label{LEM:ASYMPQ2}
	Assume that Assumption \ref{assum1} holds.
	For $(q_k(a_i), k \in \mathcal K, i \in \mathcal I)$ in \eqref{eq:q_e},
	it holds that
	\begin{enumerate}[(i)]
		\item $\E\off{\bar Q_k^n (\bar A_i^n)} = q_{k}(a_i)$ for all $n \geq 1$, $k \in \mathcal{K}$, $i \in \mathcal{I}$.
		\item  If
		(a) $\E\off{S_k^2} < \infty$ for all $k \in \mathcal{K}$,
		(b) $\E\off{(V_i^n)^2} < \infty$ for all $i \in \mathcal{I}$, $n \geq 1$,
		and (c) $\E\off{(\bar V_i^n)^2} \arr s_i^2$ as $n \arr \infty$ for all $i \in \mathcal{I}$,
		then $\E  \off{Q_k^n (A_i^n) Q_j^n (A_i^n)}  < \infty$ for all $n \geq 1$ and
		$$\lim_{n\tinf} \E\off{\bar Q_k^n (\bar A_i^n) \bar Q_j^n (\bar A_i^n)  } = q_{k}(a_i) q_{j}(a_i),
		\qforallq k, j \in \mathcal{K}, \text{ and } i \in \mathcal{I}.$$
		\item If (a) for each $k\in \mathcal{K}$, there exists $\epsilon_k >0$ such that $\E\off{e^{t S_k}} < \infty$ for all $t \in (-\epsilon_k, \epsilon_k)$,
		(b) $\E\off{e^{t V_i^n}} < \infty$ for all $t \in \RR_+$, $i \in \mathcal{I}$, $n \geq 1$, (c) $\E\off{(\bar V_i^n)^\ell} \arr s_i^\ell$ as $n \arr \infty$ for all $\ell \geq 3$, $i \in \mathcal{I}$,
		then $\E\off{Q_k^n (A_i^n)^\ell} < \infty$ for all $n \geq 1$
		and
		$$\lim_{n\tinf} \E\off{\bar Q_k^n (\bar A_i^n)^\ell} = (q_{k}(a_i))^\ell \qforallq \ell \geq 3, \,\, k \in \mathcal{K}, \text{ and } i \in \mathcal{I}.$$
	\end{enumerate}
\end{theorem}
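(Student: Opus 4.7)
The plan is to prove the three parts in sequence, leveraging the buffer occupancy equations of Theorems \ref{lem:FirstMoment} and \ref{lem:SecondMoment} together with the m.g.f.\ bound of Lemma \ref{lem:PGFUpperBound}. Part (i) is immediate by scaling and uniqueness: in the $n$th system, Theorem \ref{lem:FirstMoment} asserts that $f_i^n(k) := \E\off{Q_k^n(A_i^n)}$ is the unique solution to \eqref{eq:q_e} with $s_i$ replaced by $\E\off{V_i^n} = n s_i$. By inspection, $f_i^n(k) = n q_k(a_i)$ satisfies this scaled system; uniqueness yields $\E\off{\bar Q_k^n(\bar A_i^n)} = q_k(a_i)$ for all $n \geq 1$.

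For Part (ii), I would rescale the second-order buffer occupancy equations \eqref{eq:35}--\eqref{eq:34} for system $n$ (where $s_i \mapsto n s_i$, $\E\off{V_i^2} \mapsto \E\off{(V_i^n)^2}$, and the first moments are $n q_k(a_i)$ by Part (i)). Dividing through by $n^2$ and setting $\bar f_i^n(j,k) := f_i^n(j,k)/n^2$ converts \eqref{eq:35}--\eqref{eq:34} into an affine system for $(\bar f_i^n)$ whose coefficients converge as $n \to \infty$---using condition (c) for the term $\lambda_j \lambda_k \E\off{(V_i^n)^2}/n^2 \to \lambda_j \lambda_k s_i^2$, and noting that ``small'' terms of order $\lambda_j \lambda_k f_i^n(p(i)) r_i \E\off{S_{p(i)}^2}/[n^2 (1-\rho_{p(i)})^3] = O(1/n)$ vanish. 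Direct substitution, together with \eqref{eq:q_e}, verifies that $\bar f_i(j,k) = q_j(a_i) q_k(a_i)$ solves the limiting system. Since the weights in \eqref{eq:55} are $n$-independent, the contraction argument of Theorem \ref{lem:SecondMoment} applies unchanged to the limiting system, so the fixed point is unique; continuity of fixed points of parametric contractions (only the affine part depends on $n$) then yields $\bar f_i^n(j,k) \to q_j(a_i) q_k(a_i)$, and hence $\E\off{\bar Q_k^n(\bar A_i^n) \bar Q_j^n(\bar A_i^n)} \to q_k(a_i) q_j(a_i)$ (the diagonal correction $q_k(a_i)/n$ when $j=k$ vanishes).

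For Part (iii), I would bootstrap from Parts (i) and (ii): together they yield $\mathrm{Var}(\bar Q_k^n(\bar A_i^n)) \to 0$, so $\bar Q_k^n(\bar A_i^n) \to q_k(a_i)$ in $L^2$ and hence in probability. To upgrade to $L^\ell$-convergence, I would establish uniform integrability of $\{(\bar Q_k^n(\bar A_i^n))^\ell\}_{n \geq 1}$ via a uniform-in-$n$ bound $\sup_n \E\off{e^{t \bar Q_k^n(\bar A_i^n)}} < \infty$ for some $t > 0$. The latter would follow from applying Lemma \ref{lem:PGFUpperBound} to system $n$ at $z_k = e^{t/n}$ (other components equal to $1$): the recursion \eqref{eq:PGF_recursion} produces $y^{(j)} = O(1/n)$ uniformly in $j$, so each $y^{(j-1)} V_\ell^n = O(1/n) \cdot O(n) = O(1)$, and each factor in the product bound of Lemma \ref{lem:PGFUpperBound} is bounded uniformly in $n$ under conditions (b) and (c). The main obstacle lies here: making this scaling rigorous requires re-examining the matrix recursion \eqref{eq:101} and the contraction analysis in the proof of Theorem \ref{thm:AllMoments} with $n$-dependent switchover-time distributions, and verifying that the constants $\zeta$ and $\alpha$ controlling the uniform bound do not degrade as $n \to \infty$.
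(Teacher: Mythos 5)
Your parts (i) and (ii) are correct but take a genuinely different route from the paper. The paper works directly with the infinite-product form of the p.g.f.\ from Proposition \ref{lem:PGFBinExh}, $F_i^n(\mathbf z)=\prod_{j}\E[e^{-y^{(j-1)}V^n_{w(i,j)}}]$ (the $y^{(j)}$'s being $n$-independent), differentiates it term by term (justifying the interchanges via Dini's theorem and dominated convergence), and reads off $\E[Q_k^n(A_i^n)]=\sum_j(-ns_{w(i,j)})[\partial y^{(j-1)}/\partial z_k]_{\mathbf z=\mathbf 1}$ together with its second-order analogue, in which the $O(n^2)$ contributions reassemble into the square of the first-moment sum. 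Your route---homogeneity plus uniqueness of \eqref{eq:q_e} for (i), and for (ii) dividing the second-order buffer occupancy equations by $n^2$, checking that $q_j(a_i)q_k(a_i)$ solves the limiting affine system, and using continuity of the fixed point of an $n$-independent contraction---is sound and arguably more transparent; the substitution check does go through because \eqref{eq:q_e} makes $q_j(a_{i+1})q_k(a_{i+1})$ expand into exactly the limit of the right-hand side of \eqref{eq:35}--\eqref{eq:34}. What you lose is a template that extends uniformly to all orders $\ell$, which is precisely where your argument stalls.

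Part (iii) has a genuine gap. Convergence in probability from (i)--(ii) plus uniform integrability of $\{(\bar Q_k^n(\bar A_i^n))^\ell\}$ would suffice, but your proposed route to UI---a uniform-in-$n$ bound on $\E[e^{t\bar Q_k^n(\bar A_i^n)}]$ via Lemma \ref{lem:PGFUpperBound} at $z_k=e^{t/n}$---is not available under the stated hypotheses. That bound leaves you needing $\sup_n\E[\exp(c\,\bar V^n_\ell)]<\infty$ for some $c>0$: the exponent is of the form $O(1/n)\cdot V^n_\ell$, and $V^n_\ell$ is a random variable, not deterministically $O(n)$. Hypothesis (b) gives finiteness of $\E[e^{tV_i^n}]$ only for each fixed $n$, and hypothesis (c) gives convergence of each fixed normalized moment $\E[(\bar V_i^n)^m]\to s_i^m$; neither controls the growth in $m$ of $\sup_n\E[(\bar V_i^n)^m]$, so no uniform exponential moment follows. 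The weaker sufficient condition $\sup_n\E[(\bar Q_k^n(\bar A_i^n))^{\ell+1}]<\infty$ is not free either: establishing it essentially requires the $(\ell+1)$th-order moment computation. That computation is what the paper does: it differentiates the product $\ell$ times, observes that every term of the Leibniz expansion contributes at order $n^\ell$ (condition (c) handles the clustered derivatives, $\E[(V^n)^m](\partial y/\partial z_k)^m\sim n^ms^m(\partial y/\partial z_k)^m$), and reassembles the sum by the multinomial theorem into $\bigl(\sum_j(-ns_{w(i,j)})\partial y^{(j-1)}/\partial z_k\bigr)^\ell+o(n^\ell)$. To close your proof, replace the UI step with this direct computation.
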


Theorem \ref{LEM:ASYMPQ2} suggests that the $p$th moment of the steady-state queue length at the polling epochs can be approximated by the $p$th power of the expected steady-state queue length at the polling epochs, and this approximation is asymptotically accurate as the switchover times increase to infinity.
It is significant that this approximation method requires little computation effort, as it only requires solving for the first moment via the system of $IK$ linear equations in \eqref{eq:q_e}; see Example \ref{ex:CyclicBinomial} for a case where closed-form solutions can be obtained.
Later in Section \ref{sec:Numerical}, we demonstrate the effectiveness of this approximation method via simulation.

\begin{proof} {Proof of Theorem \ref{LEM:ASYMPQ2}.}
	We denote by $F_{i}^n\of{z_1, ...,z_K }$ the joint p.g.f.\ of $Q^n$ at time $A_i^n$, namely,
	\begin{equation*}
	F_{i}^n(z_1,...,z_K) := \E\off{\prod_{k=1}^K z_k^{Q_k^n \of{A_i^n} }} , \quad i \in \mathcal{I} .
	\end{equation*}
	Take ${\mathbf z} \in [ 0, 1]^K$.
	
	\paragraph{Proof of (i).}
	Let $k \in \mathcal{K}$ and $i \in \mathcal{I}$, and note that $\E\off{Q_k^n(A_i^n)} < \infty$ for each $n \ge 1$
	due to the stability of the system under BEP.	
	Since the switchover times do not affect the derivation of $y^{(j)}$ and $z_k^{(j)}$ in \eqref{eq:PGF_recursion}, $k \in \mathcal{K}$, $j \geq 0$,
	we have that, by Proposition \ref{lem:PGFBinExh},
	\begin{equation} \label{eq:PGF_dissect}
	\begin{split}
	F_{i}^n\of{z_1, ...,z_K }
	&= \prod_{j=1}^\infty \hat R^n_{w(i,j)}\off{y^{(j-1)} }
	= \prod_{j=1}^\infty \E\off{e^{-y^{(j-1)} V^n_{w(i,j)} }}.
	\end{split}
	\end{equation}	
	Then
	\begin{align}
	\E\off{Q_k^n(A_i^n)}
	&= \off{\frac{\partial F_i^n(z_1,...,z_K)}{\partial z_k} }_{\mathbf z = \mathbf 1}  \nonumber \\
	&= \sum_{j=1}^\infty \off{\frac{\partial \E\off{e^{-y^{(j-1)} V^n_{w(i,j)} } } }{\partial z_k}
		\prod_{\ell \neq j }^\infty \E\off{e^{-y^{(\ell -1)} V^n_{w(i,\ell)} }}   }_{\mathbf z = \mathbf 1} \label{eq:(b)} \\
	&= \sum_{j=1}^{\infty}  \off{\E\off{ \frac{ \partial e^{-y^{(j-1)}V^n_{w(i,j)}}  }{\partial z_k}  }
		\prod_{\ell \neq j }^\infty \E\off{e^{-y^{(\ell -1)} V^n_{w(i,\ell)} }}   }_{\mathbf z = \mathbf 1}  \label{eq:(c)} \\
	&= \sum_{j=1}^{\infty}  -s_{w(i,j)} n \off{\frac{\partial y^{(j-1)}}{\partial z_k} }_{\mathbf z = \mathbf 1} . \label{eq:(d)}
	\end{align}
	The justification for equalities \eqref{eq:(b)} and \eqref{eq:(c)} is relegated to Section \ref{Sec:AsymMomentAux} below.
	It follows from \eqref{eq:(d)} that
	\begin{equation*}
	\begin{split}
	\E\off{\bar Q_k^n(\bar A_i^n)}
	&= \frac{1}{n} \sum_{j=1}^{\infty}  -s_{w(i,j)} n \off{\frac{\partial y^{(j-1)}}{\partial z_k} }_{\mathbf z = \mathbf 1} = \E\off{Q_k(A_i)} = q_{k}(a_i), \\
	\end{split}
	\end{equation*}
	where the last two equalities follow from Theorem \ref{lem:FirstMoment} for the unscaled stochastic system, which is tantamount to taking $n = 1$ in the sequence
	of systems.

	\paragraph{Proof of (ii).}
	
	That $\E\off{Q_k^n(A_i^n)^2} < \infty$ for all $k \in \mathcal{K}$, $i \in \mathcal{I}$ (and $n \ge 1$) follows from Theorem \ref{lem:SecondMoment}.
	We first characterize $\E\off{Q_k^n(A_i^n)^2}$ via the p.g.f.\ of $Q_k^n(A_i^n)$, and then characterize the limit of the cross moments.
	Taking the partial derivative of \eqref{eq:PGF_dissect} with respect to $z_k$ twice gives
	\begin{align}
	&\off{\frac{\partial^2 \of{ F^n_{i}\of{z_1, ...,z_K } }}{\partial z_k^2} }_{\mathbf z= \mathbf 1} \nonumber \\
	&= \sum_{j=1}^{\infty}  \off{\frac{\partial}{\partial z_k}\of{\E\off{ \frac{ \partial e^{-y^{(j-1)}V^n_{w(i,j)}}  }{\partial z_k}  } \prod_{\ell \neq j }^\infty \E\off{e^{-y^{(\ell -1)} V^n_{w(i,\ell)} }}   }}_{\mathbf z = \mathbf 1} \nonumber \\
	&= \sum_{j=1}^{\infty}  \off{ \frac{\partial}{\partial z_k}\E\off{ \frac{ \partial e^{-y^{(j-1)}V^n_{w(i,j)}}  }{\partial z_k}  }  +  \E\off{ \frac{ \partial e^{-y^{(j-1)}V^n_{w(i,j)}}  }{\partial z_k}  } \sum_{\ell \neq j }^\infty  \frac{\partial}{\partial z_k}
		\E\off{e^{-y^{(\ell -1)} V^n_{w(i,\ell)} }}   }_{\mathbf z = \mathbf 1} \label{eq:(f)} \\
	&= \sum_{j=1}^{\infty}   \of{\E\off{(-V^n_{w(i,j)})^2} \of{ \off{\frac{\partial y^{(j-1)}}{\partial z_k} }_{\mathbf z =\mathbf 1} }^2 - s_{w(i,j)} n \off{\frac{\partial^2 y^{(j-1)}}{\partial z_k^2}}_{\mathbf z =\mathbf 1}  } \label{eq:(g)} \\
	&\quad + \sum_{j=1}^{\infty} \sum_{\ell \neq j }^\infty \of{-s_{w(i,j)} n \off{\frac{\partial y^{(j-1)}}{\partial z_k}}_{\mathbf z =\mathbf 1} } \of{-s_{w(i,\ell)} n \off{\frac{\partial y^{(\ell-1)}}{\partial z_k}}_{\mathbf z =\mathbf 1} } . \nonumber
	\end{align}
	Above, equality \eqref{eq:(f)} involves differentiating an infinite product, which is justified similarly to the proof that equality \eqref{eq:(b)} holds;
	equality \eqref{eq:(g)} involves interchanging the order of taking derivative and expectation,
	which follows from similar bounding arguments employed to derive equality \eqref{eq:(c)}; see Section \ref{Sec:AsymMomentAux} below.
	Then	
	\begin{align*}
	&\E\off{Q_k^n(A_i^n)^2} \\
	&= \off{\frac{\partial F_i^n(z_1,...,z_K)}{\partial z_k} }_{\mathbf z= \mathbf 1} + \off{\frac{\partial^2 F_i^n(z_1,...,z_K)}{\partial z_k^2 } }_{\mathbf z= \mathbf 1} \\
	&= \E\off{Q_k^n(A_i^n)} +  \sum_{j=1}^{\infty}   \of{\E\off{(-V^n_{w(i,j)})^2} \of{ \off{\frac{\partial y^{(j-1)}}{\partial z_k} }_{\mathbf z =\mathbf 1} }^2 - s_{w(i,j)} n \off{\frac{\partial^2 y^{(j-1)}}{\partial z_k^2}}_{\mathbf z =\mathbf 1}  } \\
	&\quad + \sum_{j=1}^{\infty} \sum_{\ell \neq j }^\infty \of{-s_{w(i,j)} n \off{\frac{\partial y^{(j-1)}}{\partial z_k}}_{\mathbf z =\mathbf 1} } \of{-s_{w(i,\ell)} n \off{\frac{\partial y^{(\ell-1)}}{\partial z_k}}_{\mathbf z =\mathbf 1} } .
	\end{align*}
	Under the switchover-time scaling, it holds that
	\begin{align*}
	&\E\off{\bar Q_k^n(\bar A_i^n)^2} \\
	&= \frac{1}{n}\E\off{\bar Q_k^n(\bar A_i^n)}  + \frac{1}{n^2}  \sum_{j=1}^{\infty}
	\of{\E\off{(-V^n_{w(i,j)})^2} \of{ \off{\frac{\partial y^{(j-1)}}{\partial z_k} }_{\mathbf z =\mathbf 1} }^2
		- s_{w(i,j)} n \off{\frac{\partial^2 y^{(j-1)}}{\partial z_k^2}}_{\mathbf z =\mathbf 1} }  \\
	&\quad + \frac{1}{n^2} \sum_{j=1}^{\infty} \sum_{\ell \neq j }^\infty \of{-s_{w(i,j)} n
		\off{\frac{\partial y^{(j-1)}}{\partial z_k}}_{\mathbf z =\mathbf 1} } \of{-s_{w(i,\ell)} n \off{\frac{\partial y^{(\ell-1)}}{\partial z_k}}_{\mathbf z =\mathbf 1} }. \\
	&\arr \sum_{j=1}^{\infty}  \of{-s_{w(i,j)} \off{\frac{\partial y^{(j-1)}}{\partial z_k} }_{\mathbf z =\mathbf 1} }^2 \\
	&\quad+ \sum_{j=1}^{\infty} \sum_{\ell \neq j }^\infty \of{-s_{w(i,j)} \off{\frac{\partial y^{(j-1)}}{\partial z_k}}_{\mathbf z =\mathbf 1} }
	\of{-s_{w(i,\ell)} \off{\frac{\partial y^{(\ell-1)}}{\partial z_k}}_{\mathbf z =\mathbf 1} }  \quad  \text{as } n \arr \infty \\
	&= \of{ \sum_{j=1}^{\infty}  -s_{w(i,j)} \off{\frac{\partial y^{(j-1)}}{\partial z_k} }_{\mathbf z =\mathbf 1} }^2 \\
	&= q_{k}(a_i)^2 ,
	\end{align*}
	where the second-to-last equality is justified by the fact that each summand on its left-hand side is non-negative.
	Indeed, it can be observed from the derivation of $y^{(j)}$, e.g., in \eqref{eq:6}, that $\partial y^{(j)}/\partial z_k \leq 0$ for all $j \geq 0$.
	
	For $k \neq j$, $k, j \in \mathcal{K}$, a similar derivation can be applied to get that
	\begin{equation*}
	\E\off{\bar Q_k^n (\bar A_i^n) \bar Q_j^n (\bar A_i^n)  } \arr q_{k}(a_i) q_{j}(a_i) \quad  \text{as } n \arr \infty .
	\end{equation*}
	
	\paragraph{Proof of (iii).}
	
	That $\E\off{Q^n \of{A_i^n}^\ell} < \infty$ for all $\ell \geq 3$ and $i \in \mathcal{I}$ is established in Theorem \ref{thm:AllMoments}.
	For any $\ell \geq 3$, it follows from similar lines of derivation as in the proofs of Assertions {\bf (i)} and {\bf (ii)} above that
	\begin{equation*}
	\begin{split}
	\E\off{\bar Q_k^n (\bar A_i^n)^\ell}
	&= \frac{1}{n^\ell}  \of{ \sum_{j=1}^{\infty} -s_{w(i,j)} n \off{\frac{\partial y^{(j-1)}}{\partial z_k} }_{\mathbf z= \mathbf 1}  }^\ell + o(1)
	\arr (q_{k}(a_i))^\ell \quad \text{as } n \arr \infty . \Halmos
	\end{split}	
	\end{equation*}
\end{proof}

\subsubsection{Proofs of the Auxiliary Results in the Proof of Theorem \ref{LEM:ASYMPQ2}}  \label{Sec:AsymMomentAux}

\begin{proof}{Proof of equality \eqref{eq:(b)}.}
	Let $\Upsilon_{i,m} : [0, 1]^K \arr \RR_+$ be defined as
	\begin{equation*}
	\Upsilon_{i,m}(\mathbf z) := \prod_{j=1}^m \E\off{e^{-y^{(j-1)} V^n_{w(i,j)} }} ,
	\end{equation*}
	for $y^{(j)}$'s calculated recursively from $\mathbf z$ in \eqref{eq:6}.
	Observe that $y^{(j)} \geq 0$ for all $j \geq 0$ for $\mathbf z \leq \mathbf 1$.
	Since $V^n_{w(i,j)} \geq 0$ w.p.1 for all $j \geq 0$, we have
	\begin{equation*}
	\E\off{e^{-y^{(j-1)} V^n_{w(i,j)} }}  \in (0, 1] \quad \text{for all } j \geq 0 ,
	\end{equation*}
	so that
	\begin{equation*}
	\Upsilon_{i,m}(\mathbf z) \geq \Upsilon_{i, m+1}(\mathbf z) \quad \text{for all }  m \geq 1  .
	\end{equation*}
	
	Since $\{\Upsilon_{i,m} : m \geq 1\}$ is a monotone sequence of continuous functions that converge pointwise to $F_i$ over
	the compact set $[0,1]^K$ by \eqref{eq:PGF}, the converges also holds uniformly over this set by Dini's theorem.
	Thus,
	\begin{equation*}
	\begin{split}
	\frac{\partial F_i^n(z_1,...,z_K)}{\partial z_k}
	&= \frac{\partial \prod_{j=1}^\infty \E\off{e^{-y^{(j -1)} V^n_{w(i,j)} }} }{\partial z_k}  \quad
	\text{by }  \eqref{eq:PGF} \\
	&= \sum_{j=1}^\infty \frac{\partial \E\off{e^{-y^{(j-1)} V^n_{w(i,j)} } } }{\partial z_k}
	\prod_{\ell \neq j }^\infty \E\off{e^{-y^{(\ell -1)} V^n_{w(i,\ell)} }}  ,
	\end{split}
	\end{equation*}
	where the second equality follows from the uniform convergence of $\Upsilon_{i,m}$ to $F_i$. \Halmos
\end{proof}

\begin{proof}{Proof of equality \eqref{eq:(c)}.}
	From the derivation of $y^{(j)}$ in \eqref{eq:6}, we observe that $\partial y^{(j)}/\partial z_k$ is continuous in $\mathbf z$ over $[0,1]^K$
	and thus attains a maximum. Let
	\begin{equation*}
	M(j,k) := \max_{\mathbf z  \, \in \, [0,1]^K } \, \offf{ \bigg| \frac{\partial y^{(j-1)}}{\partial z_k} \bigg| } , \quad j \geq 1.
	\end{equation*}
	Then for all $\mathbf z  \in [0,1]^K$,
	\begin{equation*}
	\bigg| \frac{ \partial e^{-y^{(j-1)}V^n_{w(i,j)}}  }{\partial z_k}  \bigg| = \bigg| - V^n_{w(i,j)} \frac{\partial y^{(j-1)}}{\partial z_k} \bigg|
	\leq M(j,k) V^n_{w(i,j)}  .
	\end{equation*}
	Since $\E\off{M(j,k) V^n_{w(i,j)}} < \infty$, it follows from the dominated convergence theorem that
	\begin{equation*}
	\frac{\partial \E\off{e^{-y^{(j-1)} V^n_{w(i,j)} } } }{\partial z_k} = \E\off{ \frac{ \partial e^{-y^{(j-1)}V^n_{w(i,j)}}  }{\partial z_k}  } . \Halmos
	\end{equation*}
\end{proof}

\subsection{Asymptotic Moments of the Busy Time} \label{ap:AsympB}

An analogue of Theorem \ref{LEM:ASYMPQ2} can be obtained for the steady-state busy times.
Recall that, for each stage $i$,
$\Theta_{p(i)}^{(\ell)}$ denotes the busy period ``generated'' by the service of the $\ell$th served customer in queue $p(i)$
(which is the queue being polled at stage $i$), $i \in \mathcal I$, $\ell \geq 1$.

\begin{proposition}  \label{LEM:ASYMPB}
	Assume that Assumption \ref{assum1} holds.
	Consider the corresponding sequence of busy times
	$\{B_i^n : i \in \mathcal{I} , n \ge 1\}$ over a generic stationary cycle.
	For $(q_k(a_i), k \in \mathcal K, i \in \mathcal I)$ in \eqref{eq:q_e}, it holds that
	\begin{enumerate}[(i)]
		\item
		$\E\off{\bar B_i^n} = r_i q_{p(i)}(a_i) \E\off{\Theta_{p(i)}} $ for all $n \geq 1$, $i \in \mathcal{I}$.
		\item  If
		(a) $\E\off{S_k^2} < \infty$ for all $k \in \mathcal{K}$,
		(b) $\E\off{(V_i^n)^2} < \infty$ for all $i \in \mathcal{I}$, $n \geq 1$,
		and (c) $\E\off{(\bar V_i^n)^2} \arr s_i^2$ as $n \arr \infty$ for all $i \in \mathcal{I}$,
		then $\E\off{\of{\bar B_{i}^n}^2} < \infty$ for all $n \geq 1$ and
		\begin{equation*}
		\lim_{n \arr \infty} \E\off{\of{\bar B_{i}^n}^2} \arr \of{r_i q_{p(i)}(a_i) \E\off{\Theta_{p(i)}}}^2, \quad \text{for all } i \in \mathcal{I} .
		\end{equation*}
		\item If (a) for each $k\in \mathcal{K}$, there exists $\epsilon_k >0$ such that $\E\off{e^{t S_k}} < \infty$ for all $t \in (-\epsilon_k, \epsilon_k)$,
		(b) $\E\off{e^{t V_i^n}} < \infty$ for all $t \in \RR_+$, $i \in \mathcal{I}$, $n \geq 1$, and (c) $\E\off{(\bar V_i^n)^\ell} \arr s_i^\ell$ as $n \arr \infty$ for all $\ell \geq 3$, $i \in \mathcal{I}$,
		then $\E\off{\of{\bar B_{i}^n}^\ell} < \infty$ for all $n \geq 1$
		and
		$$\lim_{n\tinf} \E\off{\of{\bar B_{i}^n}^\ell} = \of{r_i q_{p(i)}(a_i) \E\off{\Theta_{p(i)}}}^\ell \qforallq \ell \geq 3, \quad i \in \mathcal{I}.$$
	\end{enumerate}
\end{proposition}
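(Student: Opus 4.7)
The plan is to express $B_i^n$ as a compound binomial sum of i.i.d.\ $M/G/1$ busy periods, and then to reduce its moments to (factorial) moments of $Q_{p(i)}^n(A_i^n)$, to which Theorem \ref{LEM:ASYMPQ2} directly applies. By the description of BEP in Section \ref{sec:Model}, conditional on $Q_{p(i)}^n(A_i^n) = N$ we have $B_i^n \deq \sum_{\ell=1}^{Y_i(N, r_i)} \Theta_{p(i)}^{(\ell)}$, where $\{\Theta_{p(i)}^{(\ell)}: \ell \geq 1\}$ are i.i.d.\ copies of the $M/G/1$ busy period for queue $p(i)$ and $Y_i(N,r_i)$ is an independent Binomial$(N, r_i)$ random variable. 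Conditioning on $Q_{p(i)}^n(A_i^n)$ and applying Wald's identity gives $\E\off{B_i^n} = r_i \E\off{\Theta_{p(i)}} \E\off{Q_{p(i)}^n(A_i^n)}$; dividing by $n$ and invoking Theorem \ref{LEM:ASYMPQ2}(i) yields assertion (i), which is in fact exact for every $n \geq 1$ (no limit needed).

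For assertion (ii), I would condition on $Q := Q_{p(i)}^n(A_i^n)$ and use the standard compound-sum second-moment identity to obtain
\begin{equation*}
\E\off{(B_i^n)^2 \mid Q} = r_i^2 \, \E\off{\Theta_{p(i)}}^2 \, Q^2 + Q \of{r_i \E\off{\Theta_{p(i)}^2} - r_i^2 \E\off{\Theta_{p(i)}}^2},
\end{equation*}
where $\E\off{\Theta_{p(i)}^2}$ is finite under (a) via the classical identity $\E\off{\Theta_k^2} = \E\off{S_k^2}/(1-\rho_k)^3$. Taking expectations, dividing by $n^2$, and using $\E\off{Q} = O(n)$ (implied by Theorem \ref{LEM:ASYMPQ2}(i)), the second term becomes $O(1/n)$, while the first term tends to $r_i^2 \E\off{\Theta_{p(i)}}^2 q_{p(i)}(a_i)^2$ by Theorem \ref{LEM:ASYMPQ2}(ii). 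This delivers the limit stated in (ii).

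For assertion (iii), I would rely on the set-partition expansion of moments of a compound sum. Writing $Q := Q_{p(i)}^n(A_i^n)$ and $Y := Y_i(Q, r_i)$, for every $\ell \geq 1$
\begin{equation*}
\E\off{(B_i^n)^\ell \mid Y = m} = \sum_{\pi \in \mathcal{P}_\ell} m^{\underline{|\pi|}} \prod_{B \in \pi} \E\off{\Theta_{p(i)}^{|B|}},
\end{equation*}
where $\mathcal{P}_\ell$ is the collection of set partitions of $\{1,\dots,\ell\}$, $|\pi|$ denotes the number of blocks in $\pi$, and $m^{\underline{k}} := m(m-1)\cdots(m-k+1)$. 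Combined with the Binomial factorial-moment identity $\E\off{Y^{\underline{k}} \mid Q} = r_i^k Q^{\underline{k}}$, this expresses $\E\off{(B_i^n)^\ell}$ as $\sum_\pi r_i^{|\pi|} \E\off{Q^{\underline{|\pi|}}} \prod_{B \in \pi} \E\off{\Theta_{p(i)}^{|B|}}$. The leading summand, corresponding to the finest partition ($|\pi| = \ell$), contributes $r_i^\ell \E\off{\Theta_{p(i)}}^\ell \E\off{Q^{\underline{\ell}}}$, and every other summand has $|\pi| \leq \ell - 1$ and is therefore $O(n^{\ell - 1})$. Dividing by $n^\ell$ and applying Theorem \ref{LEM:ASYMPQ2}(iii) yields the claimed limit $\of{r_i q_{p(i)}(a_i) \E\off{\Theta_{p(i)}}}^\ell$.

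The main obstacle is a bookkeeping one: ensuring finiteness of the $\Theta_{p(i)}$-moments and $Q_{p(i)}^n(A_i^n)$-moments that appear in these expansions under each set of assumptions. For (ii), finiteness of $\E\off{\Theta_{p(i)}^2}$ follows from $\E\off{S_{p(i)}^2} < \infty$ via the $M/G/1$ busy-period identity, while finiteness of $\E\off{(Q_{p(i)}^n(A_i^n))^2}$ follows from Theorem \ref{lem:SecondMoment}. For (iii), Proposition 4.2 of \cite{nakayama2004finite}, already invoked in the proof of Theorem \ref{thm:AllMoments}, guarantees that all moments of $\Theta_{p(i)}$ are finite under condition (a), and Theorem \ref{thm:AllMoments} itself delivers all moments of $Q_{p(i)}^n(A_i^n)$ under conditions (a)--(b).
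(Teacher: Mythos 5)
Your proposal is correct and follows essentially the same route as the paper: both represent $B_i^n$ as a compound binomial sum of i.i.d.\ $M/G/1$ busy periods, reduce $\E\off{(B_i^n)^\ell}$ to factorial moments of $Q_{p(i)}^n(A_i^n)$ times moments of $\Theta_{p(i)} Y_i$, identify the leading term, and invoke Theorem \ref{LEM:ASYMPQ2} together with Theorem \ref{lem:SecondMoment} / Theorem \ref{thm:AllMoments} for finiteness. The only difference is presentational: the paper obtains the same factorial-moment expansion by differentiating the LST composition $G^n_{i,p(i)}\of{\tilde\theta_i\off{z/n}}$, whereas you derive it by direct conditioning, Wald's identity, and the set-partition moment formula.
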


\begin{proof}{Proof.}
	{\em Proof of (i).}
	Let $\{Y_i^{(j)}: j \geq 1\}$ be a sequence of i.i.d.\ Bernoulli random variables with success probability $r_i$,
	indicating whether or not the $j$th customer at stage $i$ is served. If $Y_i^{(j)} = 1$, then a busy period $\Theta_{p(i)}^{(j)}$ is ``generated"
	that is equal in distribution to the busy period of an $M/G/1$ queue corresponding to queue $k$ (that has arrival rate $\lm_k$ and service rate $\mu_k$).	
	Let $\tilde \theta_i$ be the LST of a generic random variable $\Theta_{p(i)} Y_i$ (where we drop the superscript $(j)$ for individual customers), namely,
	\begin{equation*}
	\tilde \theta_i(u) := \E\off{e^{-u \Theta_{p(i)} Y_i }} .
	\end{equation*}
	Then, the LST of
	$\bar B_i^n$ is given by
	\begin{align*}
	\hat \phi (z) &= \E\off{e^{-z\bar B_i^n}} \\
	&= \E\off{e^{- \frac{z}{n} \sum_{j=1}^{Q^n_{p(i)} \of{A_i^n}} \Theta_{p(i)}^{(j)} Y_i^{(j)}  }} \\
	&=\E\off{ \E\off{e^{- \frac{z}{n} \sum_{j=1}^{Q^n_{p(i)}\of{A_i^n}} \Theta_{p(i)}^{(j)} Y_i^{(j)}  }}  \bigg| Q_{p(i)}^n\of{A_i^n} }  \\
	&= \E\off{ \tilde \theta_{i} \off{\frac{z}{n}}^{Q_{p(i)}^n\of{A_i^n} } } \\
	&= G_{i,p(i)}^n \of{ \tilde \theta_{i} \off{\frac{z}{n}} } , \stepcounter{equation}\tag{\theequation}\label{eq:PhiHat}
	\end{align*}
	for $G_{i,p(i)}^n(z) := \E\off{z^{Q^n_{p(i)}\of{A_i^n}}}$, which is analogous to the marginal p.g.f.\ $G_{i,k}$ defined in \eqref{eq:118} (now for the $n$th system).
	
	For $\ell \geq 1$, let $G_{i,p(i)}^{n(\ell)}$ and $\tilde \theta_{p(i)}^{(\ell)} $ denote the $\ell$th order derivative of $G_{i,p(i)}^{n}$ and
	$\tilde \theta_{p(i)}$, respectively.
	Then
	\begin{equation} \label{eq:ExpecB}
	\begin{split}
	\E\off{\bar B_i^n} &= (-1) \off{ G_{i,p(i)}^{n(1)}\of{ \tilde \theta_{p(i)} \off{\frac{z}{n}} } \tilde \theta_{p(i)}^{(1)} \off{\frac{z}{n}} \frac{1}{n}  }_{z=0} \\
	&= \E\off{\bar Q_{p(i)}^n \of{\bar A_i^n} }  \E\off{\Theta_{p(i)}Y_i} \\
	&= q_{p(i)}(a_i) r_i  \E\off{\Theta_{p(i)}} \quad \text{for all } n \geq 1,
	\end{split}
	\end{equation}
	where the last equality follows from Theorem \ref{LEM:ASYMPQ2} and the independence of $\Theta_{p(i)}$ and $Y_i$.
	Note that $\E\off{\bar Q_{p(i)}^n (\bar A_i^n)} < \infty$ in the second line of \eqref{eq:ExpecB} by the stability of the system.

	\paragraph{Proof of (ii).}
	
	Taking the second derivative of \eqref{eq:PhiHat} gives
	\begin{equation} \label{eq:SecondMomentB}
	\begin{split}
	&\E\off{\of{\bar B_{i}^n}^2} \\
	&= \off{ G_{i,p(i)}^{n(2)}\of{ \tilde \theta_{p(i)} \off{\frac{z}{n}} } \of{\tilde \theta_{p(i)}^{(1)} \off{\frac{z}{n}}}^2 \frac{1}{n^2} + G_{i,p(i)}^{n(1)}\of{ \tilde \theta_{p(i)} \off{\frac{z}{n}} } \tilde \theta_{p(i)}^{(2)} \off{\frac{z}{n}} \frac{1}{n^2} }_{z=0} \\
	&= \of{\E\off{\of{\bar Q_{p(i)}^n \of{\bar A_i^n}}^2} - \frac{1}{n}\E\off{\bar Q_{p(i)}^n \of{\bar A_i^n}} }  \E\off{\Theta_{p(i)}Y_i}^2 + \frac{1}{n} \E\off{\bar Q_{p(i)}^n \of{\bar A_i^n} } \E\off{\of{\Theta_{p(i)}Y_i}^2} \\
	&\arr \of{q_{p(i)}(a_i) r_i \E\off{\Theta_{p(i)}} }^2 \quad \text{as } n \arr \infty,
	\end{split}
	\end{equation}
	where the convergence follows from Theorem \ref{LEM:ASYMPQ2} and the independence of $\Theta_{p(i)}$ and $Y_i$. Note that $\E\off{\bar Q_{p(i)}^n \of{\bar A_i^n}^2} < \infty$ in the third line of \eqref{eq:SecondMomentB} is also implied by Theorem \ref{LEM:ASYMPQ2}.

	\paragraph{Proof of (iii).}
	
	Continuing differentiating the LST $\hat \phi$, we get
	\begin{align*}
	&\E\off{\of{\bar B_{i}^n}^\ell} \\
	&= (-1)^\ell \off{G_{i,p(i)}^{n(\ell)}\of{ \tilde \theta_{p(i)} \off{\frac{z}{n}} } \of{ \tilde \theta_{p(i)}^{(1)} \off{\frac{z}{n}}}^\ell \frac{1}{n^\ell}}_{z=0} + o(1) \\
	&= \frac{1}{n^\ell} \E\off{Q_{p(i)}^n\of{A_i^n} \of{Q_{p(i)}^n\of{A_i^n}-1} \dotsm \of{Q_{p(i)}^n\of{A_i^n}-\ell+1} } \E\off{\Theta_{p(i)}Y_i}^\ell + o(1) \\
	&= \E\off{\of{\bar Q_{p(i)}^n \of{\bar A_i^n} }^\ell} \E\off{\Theta_{p(i)}Y_i}^\ell + o(1) \\
	&\arr \of{q_{p(i)}(a_i) r_i  \E\off{\Theta_{p(i)}} }^\ell \quad \text{as } n \arr \infty .
	\stepcounter{equation}\tag{\theequation}\label{eq:49}
	\end{align*}
	Note that $\E\off{Q_{p(i)}^n \of{A_i^n}^j} < \infty$ for all $1 \leq j \leq \ell$ is needed for \eqref{eq:49} to be well-defined, which is implied by Theorem \ref{LEM:ASYMPQ2}. \Halmos
\end{proof}

\subsection{Numerical Experiments} \label{sec:Numerical}
Let $b_i := r_i q_{p(i)}(a_i) \E\off{\Theta_{p(i)}}$, $i \in \mathcal I$. In light of Theorem \ref{LEM:ASYMPQ2} and Proposition \ref{LEM:ASYMPB}, we can approximate $\E\off{Q_k^n(A_i^n)^p}$ by $(n q_k(a_i))^p$ and $\E\off{(B_i^n)^p}$ by $(nb_i)^p$, with an $o(n^p)$ error term, for $(q_k(a_i), k \in \mathcal K, i \in \mathcal I)$ in \eqref{eq:q_e}.
For an unscaled stochastic system in reality, the value of $n$ can be estimated by comparing the magnitude of the service times and switchover times. For example, it is reasonable to set $n = 1$ if the service times and switchover times are of the same order of magnitude, and set $n = 10$ if the switchover times are expected to be approximately $10$ times larger than the service times.

To verify the effectiveness of the approximation, we simulate a three-queue system with the polling table $(1,2,3,2,3)$.
The queues are equipped with exponentially distributed service times and deterministic switchover times, with parameters $\lambda_1 = \lambda_2 =\lambda_3 = 2$, $\mu_1 = \mu_2 = \mu_3 = 8$, and $s_1 = s_2 = s_3 = 2$.
Moreover, the system operates under BEP with parameter $\mathbf r = (1, 0.6, 1, 1, 0.4)$.
We examine the approximation accuracy for different orders of magnitude of the switchover times. In particular, we consider four systems indexed by $n = 1, 10, 100, 1000$. For the $n$th system, the switchover times are set to $V_i^n = n s_i$ for $i \in \mathcal I = \{1,...,5\}$.

For the simulation, we initialize the $n$th system at the polling epoch of stage $1$, with queue length equal to the expected steady-state queue length floored to the nearest integer, i.e., $Q^{n}(0) := \lfloor \E\off{Q^n(A_1)} \rfloor = \lfloor q(a_1) n \rfloor $, where $q(a_1)$ is obtained by solving \eqref{eq:q_e}.
We simulate the system for a total of $100$ cycles and use the mean of $Q_k^n(A_i^n)^p$ recorded at the polling epochs over the simulated time horizon as a ground truth estimate of $\E[Q_k(A_1)^p]$. (The half-width of the 95\% confidence intervals estimated based on the batch means method is very small, i.e., smaller than 1 in most cases.)
Table \ref{fig:AsympMoment} illustrates the accuracy of using $(n q_k(a_i))^p$ to approximate $\E\off{Q_k^n(A_i^n)^p}$ for $p = 1,2,3,4,5$ and $n = 1, 10, 100, 1000$.
We observe that the approximation accuracy worsens for larger orders of moment.
As $n$ increases, the difference between $(n q_k(a_i))^p$  and the simulated $\E\off{Q_k^n(A_i^n)^p}$ diminishes as expected.
Even for a small value of $n = 10$, the difference is already well below $10\%$ of the estimated value in most cases.
Similar trends are observed in Table \ref{fig:BusyTime} which evaluates the effectiveness of using $(nb_i)^p$ to estimate $\E\off{(B_i^n)^p}$.

\begin{table}[h]
	\caption{Approximation for the moments of queue length under BEP}
	\includegraphics[width=15cm]{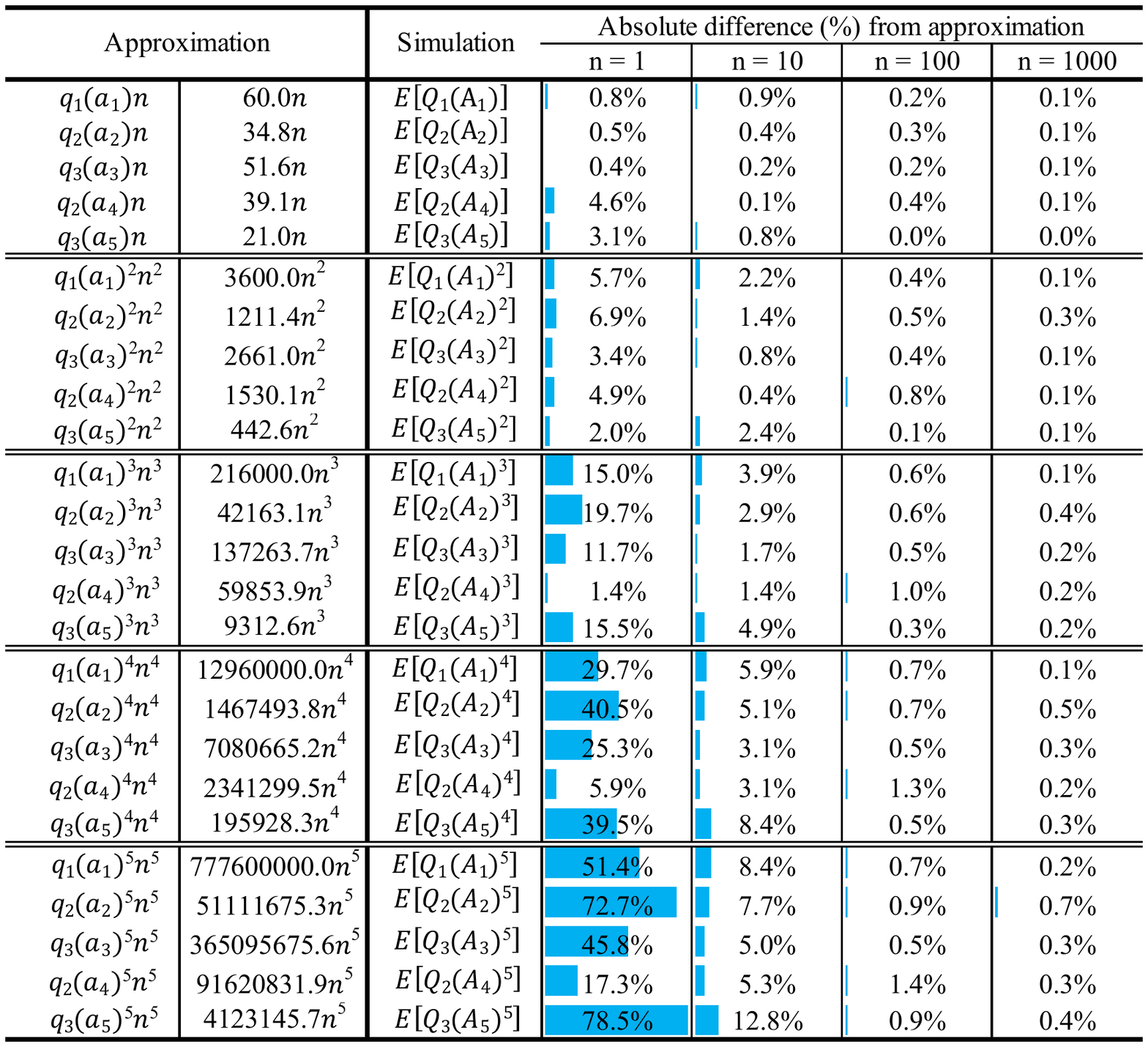}
	\centering
	\label{fig:AsympMoment}
\end{table}

\begin{table}[h!]
	\caption{Approximation for the moments of busy times under BEP}
	\includegraphics[width=15cm]{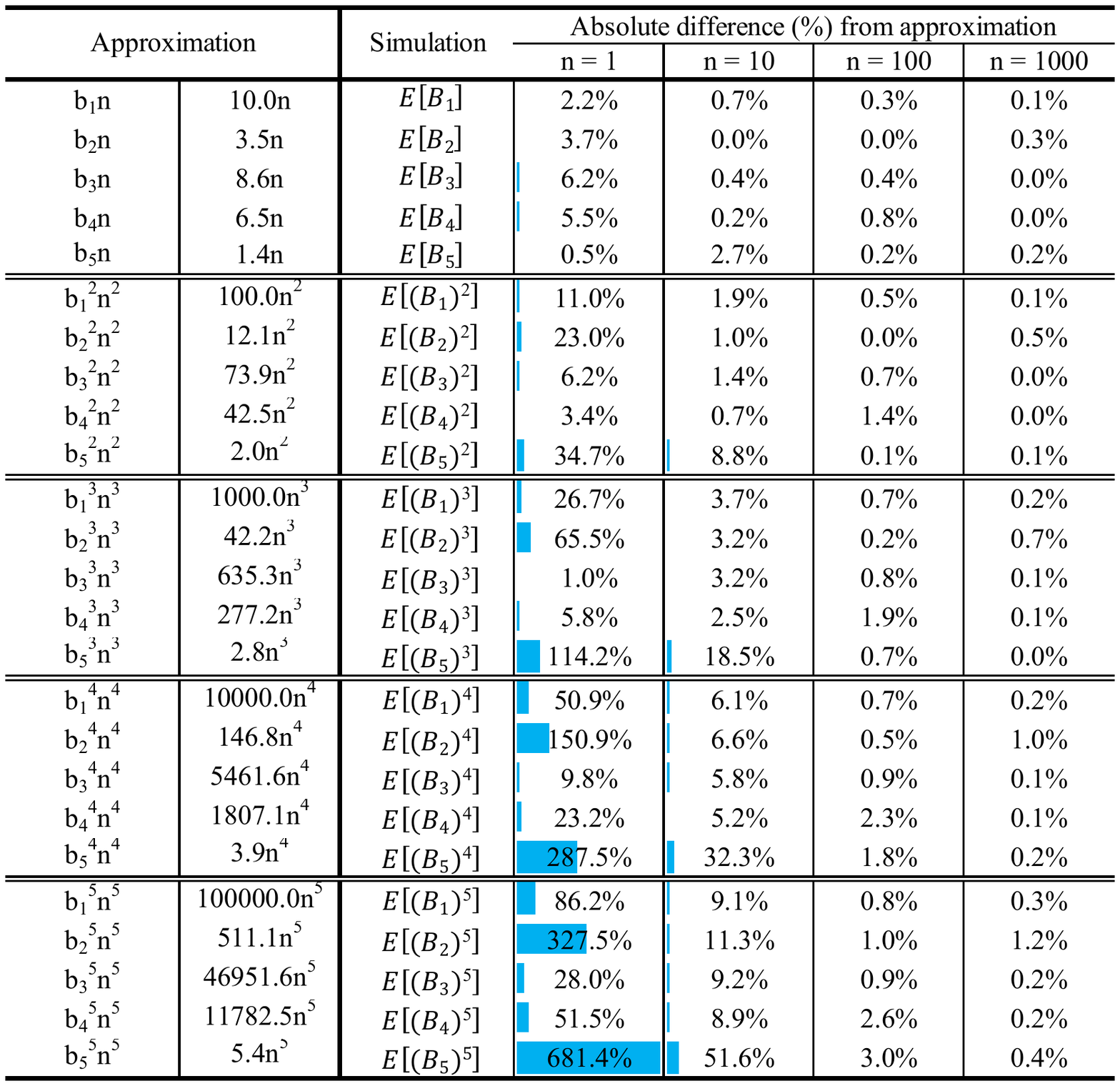}
	\centering
	\label{fig:BusyTime}
\end{table}

\section{Fluid Models with Applications to Other Policies} \label{sec:Fluid}
The approximation scheme for the moments under BEP introduced in Section \ref{ap:AsympMoment} can be summarized as follows:
\begin{enumerate}
	\item Use the buffer occupancy approach to derive the p.g.f.\ for the steady-state queue length at the polling epochs (Proposition \ref{lem:PGFBinExh}).
	\item Differentiate the p.g.f.\ to the get the first-order buffer occupancy equations (Equation \eqref{eq:q_e}), and solve these equations
	to obtain the first moment of the steady-state queue length at the polling epochs $(q_k(a_i), k \in \mathcal K, i \in \mathcal I)$ (Theorem \ref{lem:FirstMoment}).
	\item Use $(q_k(a_i), k \in \mathcal K, i \in \mathcal I)$ to derive approximations for higher moments of the steady-state queue length and busy times
	(Theorem \ref{LEM:ASYMPQ2}, Proposition \ref{LEM:ASYMPB}).
\end{enumerate}

In particular, the values of the elements of $(q_k(a_i), k \in \mathcal K, i \in \mathcal I)$ provide simple approximations for the moments
when the switchover times are  large.
We next demonstrate that $(q_k(a_i), k \in \mathcal K, i \in \mathcal I)$ is related to the equilibrium behavior of the fluid model corresponding to the stochastic system,
under the properly translated fluid control.
Using such fluid-model interpretation, similar approximations can be easily derived for other service policies by
identifying the fluid equilibrium under those controls, even for controls to which the buffer occupancy approach is not applicable,
e.g., policies that do not satisfy the branching-type property \citep{resing1993polling}.

\subsection{The Fluid Model for BEP}

A deterministic fluid model for a stochastic network is achieved by replacing the primitive processes with their rates, together with a control process.
Due to the relationship between fluid models and fluid limits (FWLLN), the control process in the fluid model can be ``translated'' from the stochastic system
by using asymptotic considerations.

In particular, to construct the fluid model we start by replacing the arrival and service processes by their rates, so that
fluid flows into buffer $k$ at a constant rate $\lm_k$, and is flowing out of the buffer at a rate $\mu_k$ {\em whenever the server is attending that buffer}.
The fluid control is then simply the enumeration of the time epochs at which the server is switching from each buffer.
Letting $q_k(t)$ denote the fluid model for queue $k$, namely, it is the fluid content in buffer $k$ at time $t$, we have that
\begin{equation*}
q_k(t) = q_k(0) + \lm_k t - \mu_k \Phi_k(t), \quad t \geq 0, \quad k \in \mathcal K,
\end{equation*}
where $\Phi_k := \{\Phi_k(t) : t\geq 0\}$ is a cumulative process of the form
\begin{equation*}
\Phi_k(t) = \int_{0}^{t} \phi_k(s) ds ,
\end{equation*}
for an indicator function $\phi_k : s \mapsto \{0,1\}$ which is equal to $1$ if the server is attending queue $k$ at time $s$, and to $0$ otherwise.

To characterize $\Phi_k$, we note that, in the stochastic system under BEP, the mean queue length when the server switches away
from queue $k$ at the end of stage $i$ is a proportion $(1-r_i)$ of its value at the polling epoch of stage $i$,
and so the same should be true for the fluid model of this queue.
It is therefore convenient to write those equations in terms of polling epochs. To this end,
for $i \in \mathcal I$ and $m \geq 1$, let $a_i^{(m)}$ denote the polling epoch of stage $i$ in the $m$th cycle.
Recall that $s_i$ denotes the mean switchover time incurred when the server switches from
stage $i$ to stage $i+1$, and $s = \sum_{i \in \mathcal I} s_i$. Then, for $k \in \mathcal K$, $i \in \mathcal I$, and $m \geq 1$,
\begin{align*}
q_k(a_i^{(m+1)}) &= q_k(a_i^{(m)}) + \lambda_k  (a_i^{(m+1)} -a_i^{(m)})  - \sum_{ \{j \in \mathcal I: p(j) = k, j \geq i\} } r_j q_k(a_j^{(m)}) \\
&\quad - \sum_{\{ j \in \mathcal I: p(j) = k, j < i \} } r_j q_k(a_j^{(m+1)}),
\stepcounter{equation}\tag{\theequation}\label{eq:20}
\end{align*}
where the length of the $m$th cycle is given by
\begin{equation}  \label{eq:21}
\bsplit
T^{(m)} & := a_i^{(m+1)} -a_i^{(m)} \\
& = s + \frac{1}{\mu_k - \lambda_k}\of{\sum_{ \{j \in \mathcal I: p(j) = k, j \geq i \}} r_j q_k(a_j^{(m)})
	+ \sum_{ \{j \in \mathcal I: p(j) = k, j < i\} } r_j q_k(a_j^{(m+1)})}.
\end{split}
\end{equation}

To approximate the stationary queue via the fluid model, we assume that the fluid model is itself stationary, in the sense that it is a periodic
function whose period equals the mean stationary cycle length; we refer to the trajectory of a stationary fluid model as a {\em periodic equilibrium} (PE).
Then for all $k \in \mathcal K$, $i \in \mathcal I$, and $m \geq 1$,
\begin{equation}  \label{eq:22}
T^{(m)} = T^{(m+1)} \qandq q_k(a_i^{(m)} + t) = q_k(a_i^{(m+1)} + t), \qforallq t \ge 0.
\end{equation}
By plugging \eqref{eq:21} and \eqref{eq:22} into \eqref{eq:20}, it is easily verified that the balance equations
for the fluid queue in equilibrium evaluated at the polling epochs are precisely the first-order buffer occupancy equations \eqref{eq:q_e}
in Theorem \ref{lem:FirstMoment}. Thus, instead of considering $( q_{k}(a_i), k \in \mathcal K, i \in \mathcal{I} )$
to be the first moment of the steady-state queue length at the polling epochs in the stochastic system,
we can interpret it as the embedded queue length at the polling epochs in the PE.

It is significant that the solution to \eqref{eq:q_e}, interpreted as a stationary fluid model (a PE) for the stochastic system under BEP
{\em in stationarity}, is also the fluid limit of $\bar Q^n := \{Q(nt)/n : t \ge 0\}$ as $n\tinf$
under the large switchover-time asymptotic regime in Assumption \ref{assum1}; see Lemma 5.2
in \cite{Hu20arxiv}.
Thus, Theorem \ref{LEM:ASYMPQ2}(iii) implies that $\{\bar Q^n_k(\bar A_i^n)^p : n \geq 1\}$ is uniformly integrable for each $p$ under the conditions of this theorem, for $k \in \mathcal K$, $i \in \mathcal I$.

\subsection{Applications of Fluid Models to Other Policies}  \label{sec:OtherPolicy}
For policies other than BEP, and in particular, branching-type controls for which the required transforms can be employed,
one can repeat the main arguments in this paper in order to prove analogous results to Theorem \ref{LEM:ASYMPQ2} and Proposition \ref{LEM:ASYMPB} (under appropriate
regularity conditions).
However, the observation that the limits for the moments agree with the corresponding powers of the fluid limit suggests that
the fluid approach can be used to approximate the moments, even if one cannot, or does not wish, to rigorously prove all the required results.
In particular, we propose using fluid models to approximate the moments whenever the switchover times are large as a heuristic
(and remind that the literature on computational tools to compute moments also did not establish that those moments exist).
The advantage of the fluid approximation is its simplicity, especially when the buffer occupancy approach is not applicable.

We now demonstrate the effectiveness of our proposed fluid-model approximations for the moments under two server-switching policies---BGP and BSP---via simulation experiments. For BGP, which like BEP is a branching-type policy,
the first-order buffer occupancy equations coincide with the balance equations for the corresponding fluid model.
However, BSP does not satisfy the branching-type property, and so the buffer occupancy equations cannot be derived for this policy.
Nevertheless, the fluid PE for this latter policy is simple to derive. (It is also standard to show that the PE is achieved as a FWLLN for the
stationary stochastic system under BSP.)

\paragraph{Fluid Approximations Under the Binomial-Gated Policy.} \label{sec:BinGated}
First proposed by \cite{levy1991binomial}, BGP is similar to BEP,
except that the server only serves the customers who are present at the polling epoch of the visit (i.e., excluding those who arrive after the queue is polled).

\begin{definition} [BGP]
	For $\mathbf r \in \mathcal R$, $i \in \mathcal I$, if the server finds $N$ customers at the polling epoch of stage $i$, then the server serves a total of $Y_i(N, r_i)$ customers, where $Y_i(N, r_i)$ is a binomial random variable with parameters $N$ and $r_i$, independently of all other random variables and processes.
\end{definition}

Constructing a fluid model for a stochastic polling system operating under BGP is straightforward:
letting $y \in \RR_+$ be the fluid queue at the polling epoch of stage $i$,
the server serves a proportion $r_i$ of the initial fluid content before switching to the next stage.
Then, as for BEP, one can show that the balance equations to solve for the (unique) PE at the polling epochs are given by, 
\begin{equation} \label{eq:q_e_gated}
\begin{split}
q_k(a_{i+1}) &= s_{i} \lambda_k + (1-r_{i}) q_k(a_i) + \lambda_k r_{i} q_k(a_i) / \mu_{k}, \quad \text{ if } k = p(i) \\
q_k(a_{i+1})  &= s_{i} \lambda_k + q_k(a_i) +  \lambda_k r_{i} q_{p(i)}(a_i) / \mu_{p(i)}  ,
\quad \text{if } k \neq p(i) ,
\end{split}
\end{equation}
for all $k \in \mathcal K$ and $i \in \mathcal I$, where $I + 1 := 1$.
Once again, \eqref{eq:q_e_gated} coincides with the first-order buffer occupancy equations for the first moment of the steady-state queue length at the polling epochs under BGP;
see \citep[Section IV]{levy1991binomial}.

We numerically evaluate the accuracy of using $( q_{k}(a_i)^p, k \in \mathcal K, i \in \mathcal{I} )$ to approximate $\E\off{Q_k(A_i)^p}$, for $p = 1,2,3,4,5$.
Again, we examine four systems with different magnitudes of the switchover times indexed by $n = 1, 10, 100, 1000$,
and with the same system parameters as those for the experiments in Tables \ref{fig:AsympMoment} and \ref{fig:BusyTime}.
For each $n$, the system operates under BGP with the same parameter $\mathbf r$.
Tables \ref{fig:Queue_Gated} and \ref{fig:BusyTime_Gated} summarize the approximation accuracy for the moments of the steady-state queue length and busy times, respectively.
We observe that the approximation error can be relatively large in a few cases with high order of moments and small switchover times (e.g., $p = 4,5$, $n = 1$),
but that the accuracy of the approximations is substantially improved as $n$ increases.

\begin{table}[htb]
	\caption{Approximation for the moments of queue length under BGP}
	\includegraphics[width=15cm]{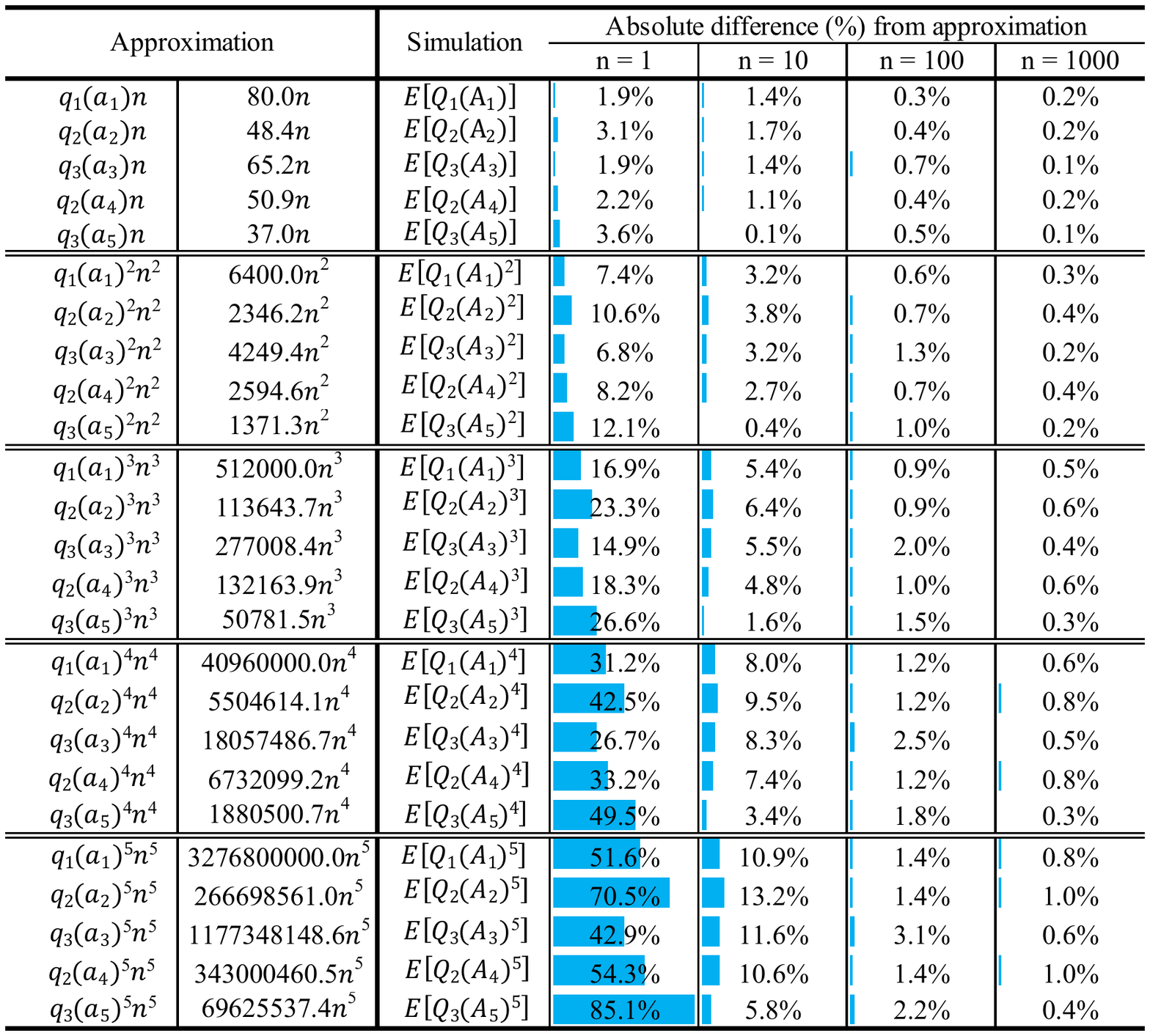}
	\centering
	\label{fig:Queue_Gated}
\end{table}

\begin{table}[htb]
	\caption{Approximation for the moments of busy times under BGP}
	\includegraphics[width=15cm]{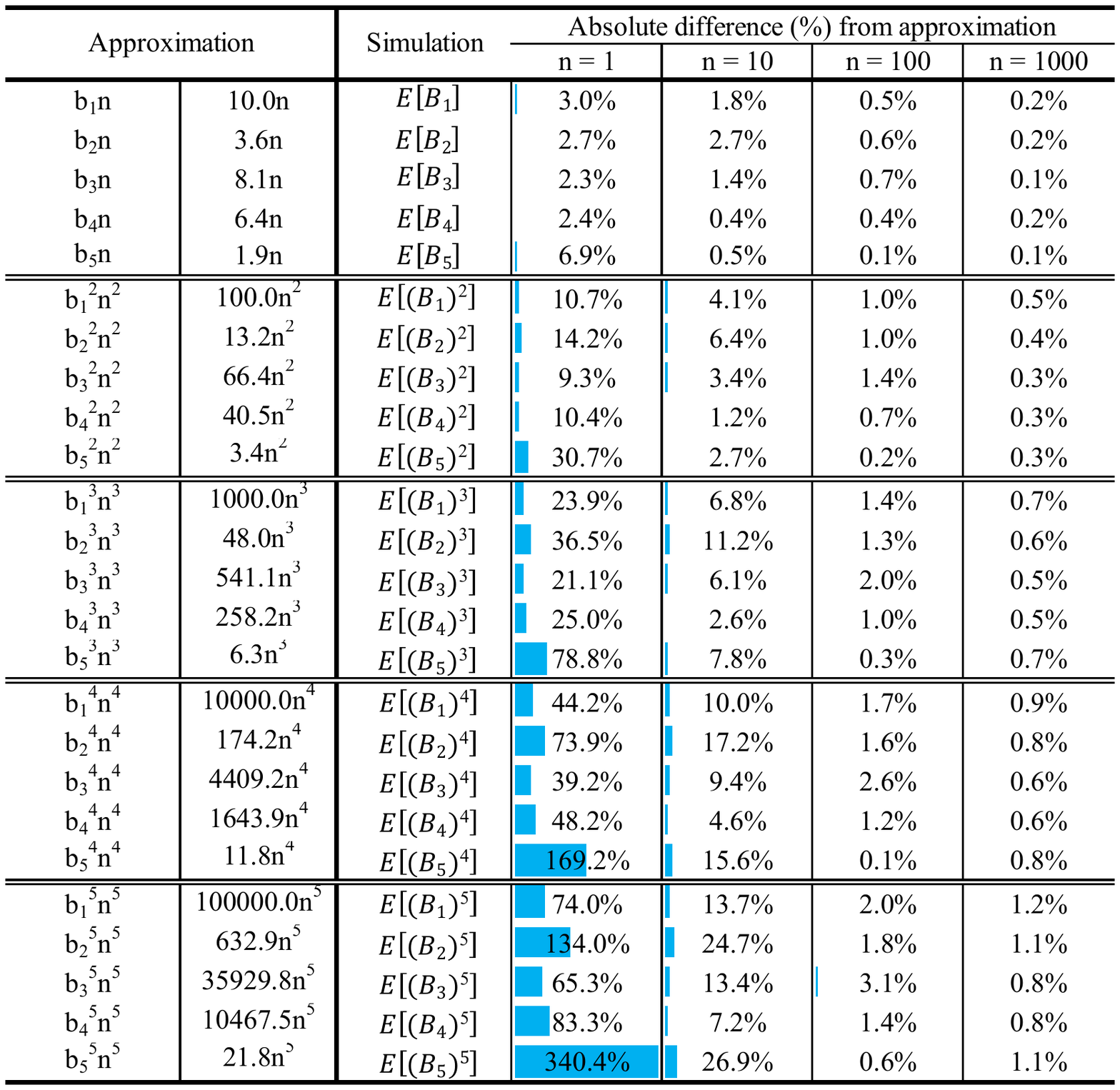}
	\centering
	\label{fig:BusyTime_Gated}
\end{table}

\paragraph{Fluid Approximations Under the Base-Stock Policy.} \label{sec:BaseStock}
Another simple and natural service control is BSP, which imposes a ``serve-up-to level" for each stage.

\begin{definition} [BSP] \label{def:basestock}
	Let $\mathbf Y := (Y_i, i \in \mathcal I) \in \ZZ_+^I$. At stage $i \in \mathcal I$, the server reduces the queue to level $Y_i$ if the queue length at the polling epoch of that stage is larger than $Y_i$. Otherwise, the server switches to the next stage immediately upon the polling epoch of stage $i$.
\end{definition}

Unlike BEP and BGP, the buffer occupancy approach cannot be applied to systems operating under BSP,
unless the same base-stock level is applied for the same queue visited at different stages, i.e., $Y_i = Y_j$ if $p(i) = p(j)$, $i, j \in \mathcal I$.
However, the fluid model is easy to derive in this case.
In particular, we apply the same rule described in Definition \ref{def:basestock} for the fluid model.
We can then show that the PE for the fluid model
at the polling epochs is the unique solution to the balance equations given by
\begin{equation} \label{eq:q_e_basestock}
\begin{split}
q_k(a_{i+1}) &= s_{i} \lambda_k + Y_i , \quad \text{ if } k = p(i) \\
q_k(a_{i+1})  &= s_{i} \lambda_k + q_k(a_i) +  \lambda_k (q_{p(i)}(a_{i}) - Y_{i}) / (\mu_{p(i)} - \lambda_{p(i)}) ,
\quad \text{if } k \neq p(i) ,
\end{split}
\end{equation}
for $k \in \mathcal K$, $i \in \mathcal I$, where $I + 1 := 1$.

To demonstrate the efficacy of the fluid model to approximate the moments when switchover times are large, we consider the same four systems
in Tables \ref{fig:AsympMoment}--\ref{fig:BusyTime_Gated}, except that the $n$th system now operates under BSP with parameter $n \mathbf Y$,
where we take $\mathbf Y = (0, 6,0,0,4)$.
Tables \ref{fig:Queue_BaseStock} and \ref{fig:BusyTime_BaseStock} report the absolute percentage differences
between the fluid-based approximations and simulated moments for the steady-state queue length and busy times.
Consistently with the observation for BEP and BGP, the approximations for the moments under BSP are accurate in the majority of cases,
except for the high-order moments when $n$ is small.

\begin{table}[h]
	\caption{Approximation for the moments of queue length under BSP}
	\includegraphics[width=15cm]{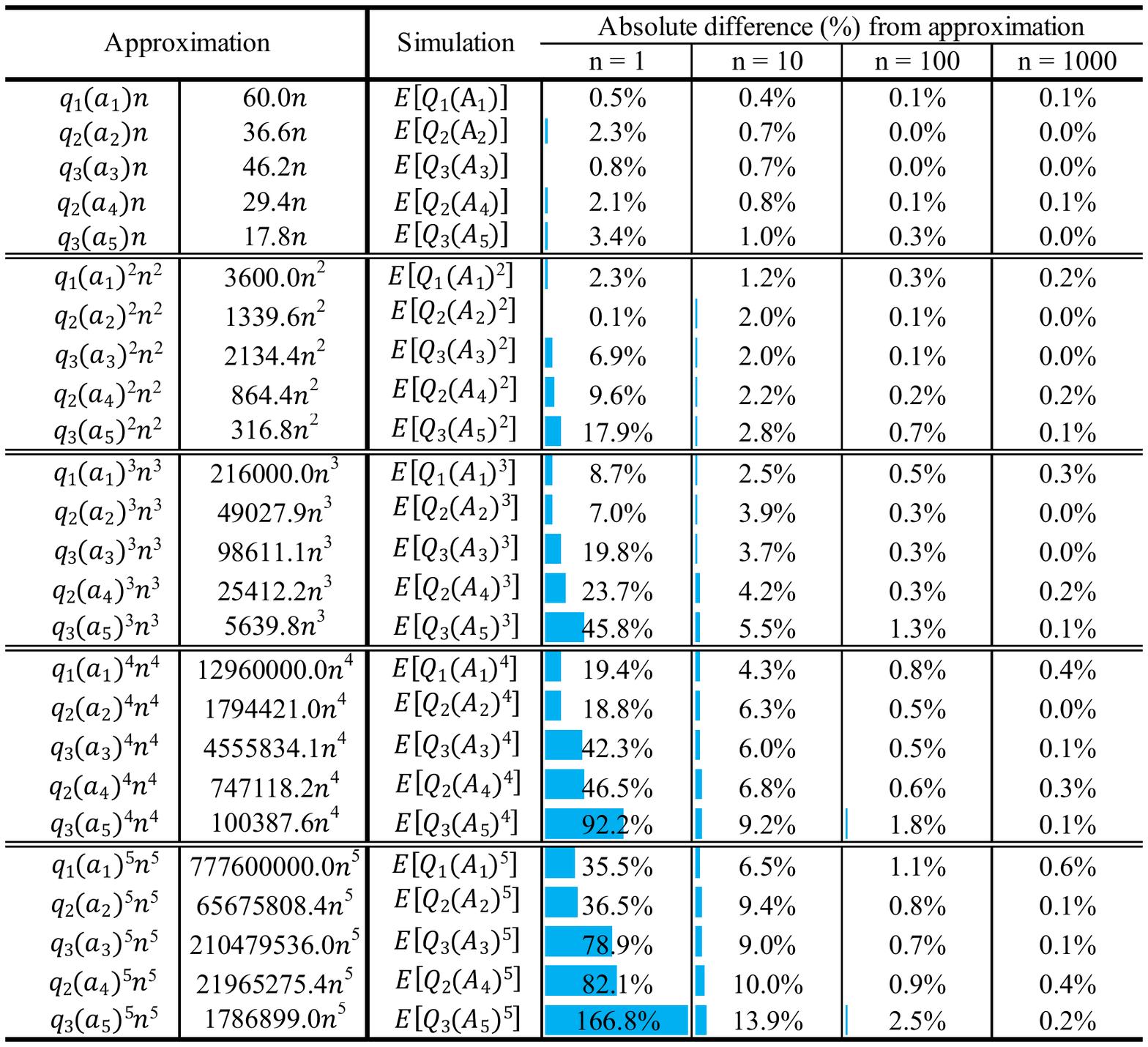}
	\centering
	\label{fig:Queue_BaseStock}
\end{table}

\begin{table}[h]
	\caption{Approximation for the moments of busy times under BSP}
	\includegraphics[width=15cm]{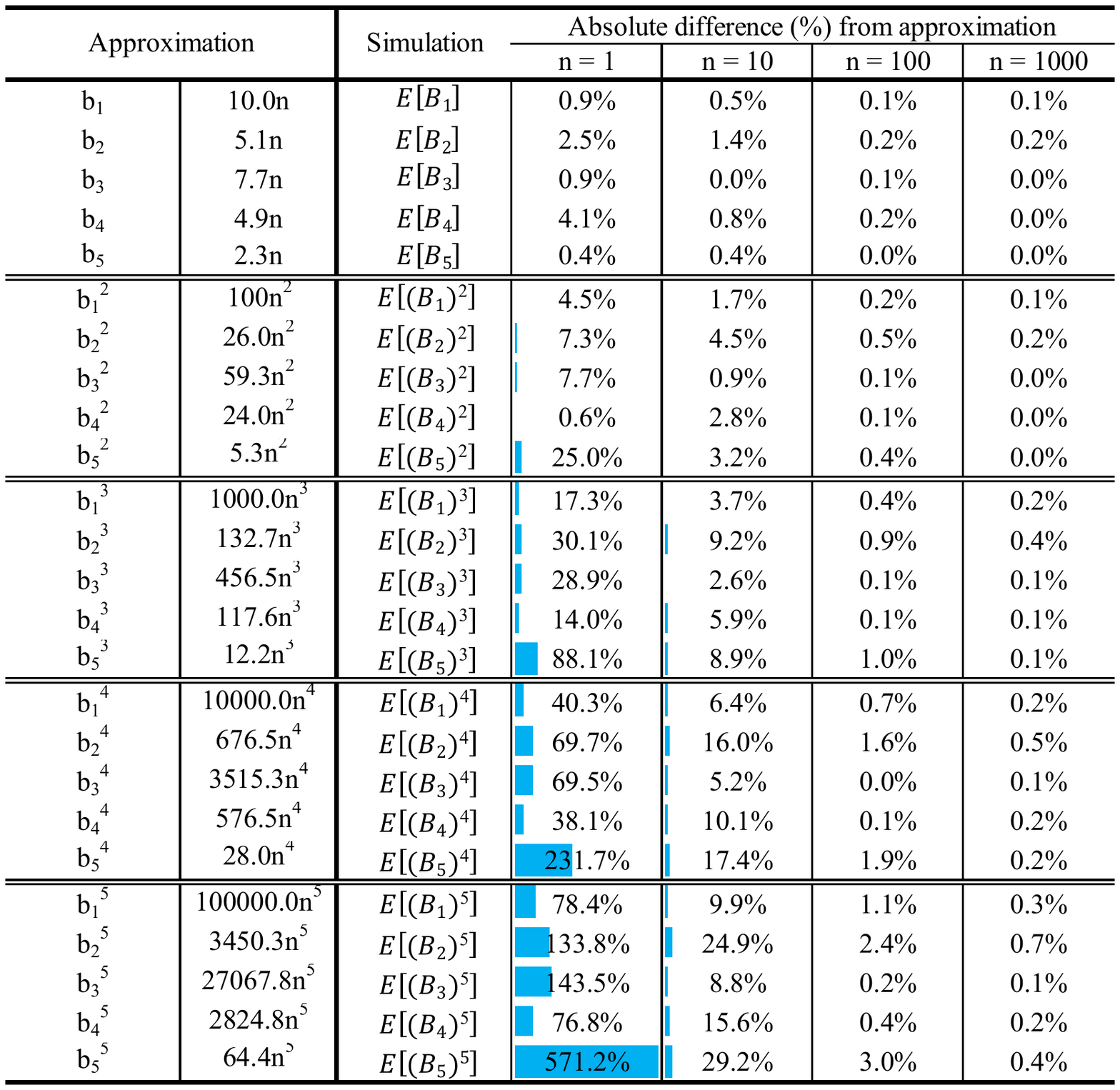}
	\centering
	\label{fig:BusyTime_BaseStock}
\end{table}

\section{Conclusion} \label{sec:Conclusion}
We derived sufficient conditions under which all the moments of the steady-state queue exist,
as well as necessary-and-sufficient conditions for the existence of the second moment, for polling systems operating under BEP.
In particular, if the service-time distributions of all the queues possess a finite m.g.f.\ in some neighborhood of zero,
and if the distribution of each switchover time has a finite m.g.f.\ on the positive real line,
then the steady-state queue length at polling epochs has finite moments of all orders.
For the existence of the second moment, the necessary-and-sufficient condition is that the service time and switchover time distributions have finite variances.
We then showed that, under a large switchover scaling, the $p$th moment of the ``fluid-scaled'' queues converges to the value of the first moment (when it exists) raised to the
$p$th power. Thus, the first moment can be used to approximate higher moments whenever the switchover times are sufficiently large.
Finally, we showed that the the first moment of the queue at polling epochs agrees with the value of the fluid model for the system,
which arises as a FWLLN for the sequence of stationary systems under the large-switchover-time scaling.
This relation suggests that the fluid models can be used to approximate the moments for other controls as well,
either by following the same rigorous steps taken in this paper, or as a heuristic.
The efficacy of the fluid-based approximations for polling systems operating under BEP, BGP, and BSP were demonstrated via numerical experiments.




\bibliographystyle{ormsv080_abv}
\bibliography{bibfile}

\end{document}